\newif\ifxetexorluatex
\definecolor{linkblue}{RGB}{1,1,190}
\definecolor{citegreen}{RGB}{1,190,1}
\theoremstyle{definition}
\newtheorem {definition}{Definition}[section]
\theoremstyle{plain}
\newtheorem {theorem}[definition]{Theorem}
\crefname   {theorem}{Theorem}{Theorems}
\newtheorem*{theorem*}{Theorem}
\newtheorem {lemma}[definition]{Lemma}
\newtheorem {proposition}[definition]{Proposition}
\newtheorem {corollary}[definition]{Corollary}
\newtheorem {notation}[definition]{Notation and Definition}
\newtheorem {questions}[definition]{Questions}
\newtheorem {question}[definition]{Question}
\theoremstyle{remark}
\newtheorem {remark}[definition]{Remark}
\newtheorem {remarks}[definition]{Remarks}
\newtheorem {example}[definition]{Example}
\newtheorem*{example*}{Example}
\crefname   {example}{Example}{Examples}
\newtheorem {examples}[definition]{Examples}
\newcommand{\sc@lettershortcut}[3]{%
  \expandafter\providecommand\csname #2#3\endcsname{#1{#3}}%
}
\newcommand{\sc@shortcuts}[3]{%
  \count@=0
  \loop
  \advance\count@ 1
  \edef\tmp@{%
    \noexpand\sc@lettershortcut\unexpanded{{#1}}{#2}{#3\count@}
  }
  \tmp@
  \ifnum\count@<26
  \repeat
}
\newcommand{\defshortcuts}[2]{\sc@shortcuts{#1}{#2}{\@alph}}
\newcommand{\defShortcuts}[2]{\sc@shortcuts{#1}{#2}{\@Alph}}
\defShortcuts{\mathbb}{b}
\defShortcuts{\mathcal}{c}
\defShortcuts{\mathfrak}{f}
\defShortcuts{\mathsf}{s}
\defshortcuts{\mathfrak}{f}
\defshortcuts{\mathsf}{s}
\def\rfop{*}
\newcommand\rigidfactorization[2][]{%
  \def\rf@delim{\rfop}
  \newif\ifrf@notfirst
  #1
  \@for\next:=#2\do{%
    \ifrf@notfirst
      \rf@delim
    \fi
    \rf@notfirsttrue
    \next
  }%
}
\newcommand\rf\rigidfactorization
\providecommand{\val}{\mathsf{v}}                 
\newcommand{\cc}[2]{{({#1}\!:\!{#2})}}          
\newcommand{\lc}[2]{{({#1}\!\!:_l\!{#2})}}        
\newcommand{\rc}[2]{{({#1}\!\!:_r\!{#2})}}        
\DeclarePairedDelimiter{\card}{\lvert}{\rvert}
\DeclareMathOperator{\End}{End}
\DeclareMathOperator{\Hom}{Hom}
\DeclareMathOperator{\Ext}{Ext}
\DeclareMathOperator{\ann}{ann}
\DeclareMathOperator{\id}{id}
\DeclareMathOperator{\supp}{supp}
\DeclareMathOperator{\im}{im}
\DeclareMathOperator{\GKdim}{GKdim}
\DeclareMathOperator{\rKdim}{rKdim}
\DeclareMathOperator{\gr}{gr} 
\newcommand{\defit}[1]{\textsf{#1}}
\setlist[enumerate,1]{label=\textup{(\arabic*)}, ref=\textup{(}\arabic*\textup{)}, leftmargin=0.75cm}
\setlist[enumerate,2]{label=\textup{(}\roman*\textup{)}, ref=\textup{(}\roman*\textup{)}}
\newlist{equivenumerate}{enumerate}{1}
\setlist[equivenumerate,1]{%
  label=\textup{(\alph*)},
  ref=\textup{(}\alph*\textup{)},
  leftmargin=0.75cm
}
\newlist{equivenumerate*}{enumerate*}{1}
\setlist*[equivenumerate*,1]{%
  label=\textup{(\alph*)},
  ref=\textup{(}\alph*\textup{)},
  leftmargin=0.75cm
}
\newlist{propenumerate}{enumerate}{1}
\setlist[propenumerate,1]{%
  label=\textup{(\roman*)},
  ref=\textup{(}\roman*\textup{)},
  leftmargin=0.75cm
}
\def\sr@stripleadingcol::#1{#1}
\def\sr@dosubref#1#2:#3 #4{\if\relax#3\relax%
  \def\first{\sr@stripleadingcol #4}%
  #1{\first}\ref{\first:#2}%
\else%
  \sr@dosubref#1#3 {#4:#2}%
\fi}%
\newcommand{\subref}[1]{\sr@dosubref\cref#1: :\relax}
\newcommand{\Subref}[1]{\sr@dosubref\Cref#1: :\relax}
\title{On noncommutative bounded factorization domains and prime rings}
\author{Jason P. Bell}
\address{Department of Pure Mathematics, University of Waterloo, Waterloo, ON, Canada N2L 3G1}
\email{jpbell@uwaterloo.ca}
\author{Ken Brown}
\address{School of Mathematics and Statistics, University of Glasgow, Glasgow G12 8QW, Scotland}
\email{ken.brown@glasgow.ac.uk}
\author{Zahra Nazemian}
\address{University of Graz\\
          NAWI Graz\\
          Institute for Mathematics and Scientific Computing\\
          Heinrichstra\ss e 36\\
          8010 Graz, Austria}
\email{zahra.nazemian@uni-graz.at}
\author{Daniel Smertnig}
\email{daniel.smertnig@uni-graz.at}
\keywords{bounded factorizations, BFD, noetherian prime rings, non-unique factorizations}
\subjclass[2020]{Primary 16P40; Secondary 13F15, 16E65, 20M13}
\begin{document}

\begin{abstract}
  A ring has \emph{bounded factorizations} if every cancellative nonunit $a \in R$ can be written as a product of atoms and there is a bound $\lambda(a)$ on the lengths of such factorizations.
  The bounded factorization property is one of the most basic finiteness properties in the study of non-unique factorizations.
  Every commutative noetherian domain has bounded factorizations, but it is open whether such a result holds in the noncommutative setting.
  We provide sufficient conditions for a noncommutative noetherian prime ring to have bounded factorizations.
  Moreover, we construct a (noncommutative) finitely presented semigroup algebra that is an atomic domain but does not satisfy the ascending chain condition on principal right or left ideals (ACCP), whence it does not have bounded factorizations.
\end{abstract}

\maketitle

\section{Introduction}

Let $R$ be a ring, denote by $R^\bullet$ its submonoid of cancellative elements (non-zero-divisors), and by $R^\times$ its group of units.
Usually $R$ will be a domain or a prime Goldie ring, in which case $R^\bullet$ is divisor-closed, that is, every left- or right-divisor of an element of $R^\bullet$ is again contained in $R^\bullet$ (see \cref{l:cancellative} below). A nonunit $u \in R^\bullet$ is an \defit{atom} (or \defit{irreducible element}) if it cannot be written as a proper product of two nonunits in $R^\bullet$. We say that $R$ is \defit{atomic} if every nonunit $a \in R^\bullet$ can be expressed as a product $a=u_1\cdots u_k$ of atoms $u_1$, $\ldots\,$,~$u_k$ of $R^\bullet$.
The ring $R$ has \defit{bounded factorizations} (is a \defit{BF-ring}) if, in addition, for every $a \in R^\bullet$ there exists a $\lambda(a) \in \bN_0$ such that $k \le \lambda(a)$ for every such factorization of $a$.

The concept of bounded factorization domains was introduced, in the setting of commutative domains, by D.\,D.~Anderson, D.\,F.~Anderson, and M.~Zafrullah \cite{anderson-anderson-zafrullah90} and is one of the most basic finiteness notions in the study of non-unique factorizations (see the recent surveys \cite{geroldinger16,geroldinger-zhong20} and in particular \cite{anderson-gotti22}).
Our restriction to cancellative elements is, to degree, necessitated by the fact that every monoid having bounded factorizations is at least unit-cancellative (see \cref{l:length-basic} below).

Chain conditions on (one-sided) ideals are well-known to imply factorization-related properties: the ring $R$ satisfies the \defit{ascending chain condition on principal ideals} (or \defit{ACCP}) if every ascending chain of principal right ideals eventually stabilizes and the same is true for chains of principal left ideals.
Every ring satisfying the ACCP is atomic (in fact, it suffices to have the ACC on principal right, respectively left, ideals generated by cancellative elements).
In particular, noetherian rings are atomic.

If $R$ is a \emph{commutative} noetherian domain, then $R$ even has bounded factorizations. The standard proof of this fact can be found in any of \cite[Theorem 4.9]{anderson-gotti22}, \cite[Proposition 2.2]{anderson-anderson-zafrullah90}, or \cite[Corollary 1.3.5]{GHK06}.
More generally, every $v$-noetherian commutative cancellative monoid has bounded factorizations.
This can be proved analogously to the case of noetherian domains \cite[Theorem 2.2.9]{GHK06}.
A different proof can be obtained by first showing that every $v$-noetherian commutative cancellative monoid has finite $\omega$-invariant \cite[Theorem 4.2]{geroldinger-hassler08}, which immediately implies the claim \cite[Lemma 3.3(3)]{geroldinger-hassler08}.
These results raise the question whether such an implication still holds for noncommutative noetherian domains, or more generally, noncommutative noetherian prime rings.
The proofs of the commutative setting do not carry over to the noncommutative one, because all of them make use of localizations or prime ideals in ways that do not generalize.

In the present paper we therefore seek \emph{sufficient} conditions for a noncommutative noetherian prime ring to have bounded factorizations.
We show that a noetherian prime ring $R$ has bounded factorizations if it satisfies one of the following conditions:
\begin{enumerate}[leftmargin=1.5em]
\item $R$ has right (or left) Krull-dimension $\le 1$ in the sense of Gabriel--Rentschler (\cref{p:smalldim}).
\item $R$ has a filtration $R_0 \subseteq R_1 \subseteq \cdots$ with $R_0 \subseteq R^\times$ such that the associated graded ring is a domain (\cref{p:filtration}).
\item $R$ is an iterated skew [Laurent] polynomial domain over a commutative or BF-ring $S$  (\cref{p:skew,cor:iterated}).
\item $R$ is a bounded Krull order (in the sense of Marubayashi or Chamarie; \cref{p: bounded}).
\item\label{m:fbn} $R$ is fully bounded noetherian (FBN) and every nonzero two-sided ideal contains a nonzero central element (\cref{thm: FBN}).
  In particular, this holds when $R$ is a PI ring (\cref{c:prime-pi}), with a second proof in this case given in \cref{subsec:pi}.
\item\label{m:quadratic} $R$ is an affine algebra of quadratic growth (\cref{thm: growth}).
\item\label{m:ag} $R$ is an Auslander-Gorenstein ring (\cref{AG}).
  See \cref{8.6} for an extensive list of rings covered by this class.
\item\label{m:adual} $R$ is a prime quotient of a noetherian algebra $S$ over a field, with $S$ having an Auslander-dualizing complex (\cref{gradejump}).
\end{enumerate}

In the first four of these classes the proofs are largely straightforward.
For \ref{m:fbn} we make use of the reduced rank and a noncommutative version of the principal ideal theorem.
Extra conditions appearing in the principal ideal theorem are the cause for the restriction to those FBN rings whose nonzero two-sided ideals contain a nonzero central element.

For \ref{m:quadratic} we establish that, if $R$ is not a PI ring and $a \in R^\bullet$ is a nonunit, then $R/aR$ has, in a certain sense, linear growth.
The crucial part is to establish $\dim(R/aR) = \infty$.
For this, we make use of a theorem of J.\,P.~Bell and A.~Smoktunowicz about the extended center of such algebras \cite[Theorem 1.2]{bell-smoktunowicz10}, and a result of Martindale, asserting that such a ring does not satisfy a linear generalized polynomial identity (\cref{l:no-linear-gpi}).

Finally the proofs of \ref{m:ag} and \ref{m:adual} make use of homological methods.
For an Auslander-Gorenstein ring $R$, one can define a finitely partitive grade function $j$ on $R$-modules (see \cref{AusGor}).
Observing $j(R/aR)=1$ for all nonunits $a \in R^\bullet$, allows us to deduce that Auslander-Gorenstein rings are BF-rings.

While many large classes of important rings are Auslander-Gorenstein rings (such as group algebras of polycyclic-by-finite groups, all known noetherian Hopf algebras, etc., see \cref{8.6}), all Auslander-Gorenstein rings have finite injective dimension and finite Krull dimension. To overcome these restrictions, for algebras over a field, A.~Yekutieli and J.\,J.~Zhang developed the more general notion of Auslander dualizing complexes using the machinery of derived categories.
For an algebra $R$ having an Auslander dualizing complex one may again introduce a grade function (with $-j$ the canonical dimension of A.~Yekutieli and J.\,J.~Zhang).
Then $-j$ satisfies Gabber's Maximality Principle on $j$-pure $R$-modules.
Using these properties, we show that if $R$ is a noetherian $K$-algebra with an Auslander dualizing complex, and $R/I$ is a $j$-pure factor ring of $R$ with an artinian classical ring of quotients, then $R/I$ is a BF-ring (\cref{gradejump}). In particular, this applies to $R/P$ with $P$ a prime ideal of $R$.

We do not know an example of a noetherian prime ring that does not have bounded factorizations, and so, in a sense, the basic question, whether \emph{every} noetherian prime ring has bounded factorizations, unfortunately remains open.

A different point of view along one which might try to extend the commutative result is the following:
commutative affine domains over fields are noetherian and therefore have bounded factorizations.
Noncommutative finitely generated (or even finitely presented) algebras need not be noetherian, but one may still ask whether any finitely presented (atomic) prime ring has bounded factorizations.
In \cref{s:exm-non-bf} we construct a semigroup algebra $R$, such that $R$ is finitely presented over a field $K$ and such that $R$ is atomic but does not have bounded factorizations (and indeed does not even satisfy the ACCP).

It is well-known that every domain $R$ that satisfies the ACCP is atomic.
While the converse is not true, the difference is somewhat subtle. (P.\,M.~Cohn, in \cite{cohn68}, somewhat infamously falsely asserted that equivalence holds.)
The first counterexample, a commutative domain that is atomic but does not satisfy ACCP, was constructed by A.~Grams \cite{grams74}.
Further classic constructions are by A.~Zaks \cite{zaks82} and by M.~Roitman \cite{roitman93} (who in fact constructed an atomic domain $R$ such that the polynomial ring $R[X]$ is not atomic).
Nevertheless producing simple examples of atomic domains that do not satisfy the ACCP remains challenging, even in the commutative setting, with recent contributions by J.\,G.~Boynton and J.~Coykendall \cite{boynton-coykendall19}, as well as F.~Gotti and B.~Li \cite{gotti-li21,gotti-li22}.

The first example of a semigroup algebra over a field that is atomic but does not satisfy the ACCP is given in \cite{gotti-li21}.
Our construction in \cref{s:exm-non-bf} provides the first example of a \emph{finitely presented} semigroup algebra over a field that is atomic but does not satisfy the ACCP.
Of course, such an example is only possible in the noncommutative setting.
\cref{s:exm-non-bf} can be read largely independently of the rest of the paper.

\textbf{Acknowledgements.}
We thank M. Hochster and and R. Heitmann for providing us with the construction in \cref{melexample}.
The second author is grateful for the support of Leverhulme Emeritus Fellowship EM-2017-081.
The first author acknowledges support of the NSERC grant RGPIN-2022-02951.

\section{Preliminaries}

A \defit{monoid} is a non-empty set $H$ together with an associative operation $\cdot\colon H \times H \to H$ and a neutral element $1$.
At this point we make no assumption on the cancellativity of $H$.
By $H^\times$ we denote the group of units of $H$.
A nonunit $u \in H$ is an \defit{atom} (or an \defit{irreducible element}) if $u=ab$ with $a$,~$b \in H$ implies $a \in H^\times$ or $b \in H^\times$.
The monoid $H$ is \defit{atomic} if every nonunit of $H$ can be represented as a product of atoms.
It is well-known that every cancellative monoid satisfying the ACC on principal left ideals and the ACC on principal right ideals is
atomic \cite[Lemma 3.4]{smertnig16} and  \cite[Lemma 3.1]{smertnig13}. 

The \defit{length set} of a nonunit $a \in H$ is
\[
  \sL(a) = \{\, k \in \bZ_{\ge 0} \mid a=u_1 \cdots u_k \text{ with atoms } u_1, \ldots\,, u_k \,\}.
\]
Trivially $\sL(a) + \sL(b) \subseteq \sL(ab)$, and in particular $\sup \sL(a) + \sup \sL(b) \le \sup \sL(ab)$ if these sets are non-empty.
For $a \in H^\times$ we set $\sL(a) = \{0\}$.
\footnote{If $H$ is Dedekind-finite, that is, every left or right divisor of a unit is again a unit, this is a reasonable and convenient definition and preserves the inequality $\sup \sL(a) + \sup \sL(b) \le \sup \sL(ab)$ also when $a$ or $b$ is a unit. If $H$ is not Dedekind-finite, then this definition is somewhat dangerous, as a unit may then possibly be also represented as a non-trivial product of atoms.
Since we will soon restrict to (unit-)cancellative monoids, which are always Dedekind-finite, this will not pose any problem.}

If $a \in H$ and $\card{\sL(a)} \ge 2$ then there exist factorizations $a=u_1\cdots u_k=v_1\cdots v_l$ with atoms $u_i$,~$v_j \in H$ and $l > k$.
Then $a^n = (u_1\cdots u_k)^{m}(v_1\cdots v_l)^{n-m}$ for all $0 \le m \le n$, and therefore 
$\{\, m k + (n-m)l : 0 \le m \le n \,\} \subseteq \sL(a^n)$. 
Thus $\card{\sL(a^n)} \ge n+1$, and so we cannot expect a uniform bound on the length sets of $H$ (unless $\card{\sL(a)} \le 1$ for all $a \in H$).
We can however hope for the following basic finiteness property.

A monoid $H$ has \defit{bounded factorizations}, or in short, is a \defit{BF-monoid}, if $H$ is atomic and $\sL(a)$ is finite for all $a \in H$.
It is \defit{half-factorial} if $\card{\sL(a)}=1$ for all $a \in H$.

\begin{definition}
  Let $H$ be a monoid and let $\lambda \colon H \to \bZ_{\ge 0}$ be a function.
  \begin{enumerate}
  \item $\lambda$ is a \defit{right length function} if $\lambda(a) > \lambda(b)$ whenever $a=bc$ with $b$,~$c \in H$ and $c$ is a nonunit.
  \item $\lambda$ is a \defit{length function} if $\lambda(a) > \lambda(b)$ whenever $a=dbc$ with $c$,~$d \in H$ and at least one of $c$,~$d$ is a nonunit.
  \item $\lambda$ is a \defit{superadditive length function} if
    \begin{propenumerate}
    \item $\lambda(ab) \ge \lambda(a) + \lambda(b)$ for all $a$,~$b \in H$; and
    \item$\lambda(a) = 0$ implies $a \in H^\times$.
    \end{propenumerate}
  \end{enumerate}
\end{definition}

Every superadditive length function is a length function, and every length function is a right length function.
If $H$ is a commutative monoid, then $\lambda \colon H \to \bN_0$ is a length function if and only if it is a right length function.
It is classical that a commutative cancellative monoid is a BF-monoid if and only if it has a length function \cite[Proposition 1.3.2]{GHK06}.
This equivalence is extended to a noncommutative and possibly non-cancellative setting by Fan and Tringali in \cite[Corollary 2.29]{fan-tringali18} using the notion of a \emph{length function} as defined above.
For our purposes it will usually be more convenient to work with \emph{right length functions}. Our first goal, in \cref{t:bf}, is to show that the existence of \emph{any} of these types of length function is equivalent to $H$ having bounded factorizations.

A monoid $H$ is \defit{unit-cancellative} if $a=au$ or $a=ua$ with $a$,~$u \in H$ implies $u \in H^\times$.
Every cancellative monoid is unit-cancellative.
In a unit-cancellative monoid, every left [right] invertible element is invertible: if $uv=1$ then $uvu=u$ and hence $vu \in H^\times$. Hence $u$ also has a left inverse.
In particular, any right or left divisor of a unit is itself a unit.

\begin{lemma} \label{l:length-basic}
  Let $H$ be a monoid with right length function $\lambda$.
  \begin{enumerate}
  \item \label{length-basic:prod} If $a_1$, $\ldots\,$,~$a_k \in H$ with $k \ge 0$ are nonunits, then $\lambda(a_1 \cdots a_k) \ge k$.
    In particular, if $a \in H$ is a nonunit, then $\lambda(a) > 0$.
  \item \label{length-basic:cancellative} $H$ is unit-cancellative.
  \end{enumerate}
\end{lemma}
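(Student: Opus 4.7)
My plan is to prove (1) by induction on $k$ and then deduce (2) from (1). Both parts rely only on the defining inequality of a right length function, though part (2) requires a small iteration trick to get around the one-sidedness of that inequality.

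For part (1), the base case $k=0$ concerns the empty product $1$, for which $\lambda(1) \ge 0$ holds automatically from the $\bZ_{\ge 0}$-valuedness of $\lambda$. For the inductive step I write $a_1 \cdots a_k = (a_1 \cdots a_{k-1}) \cdot a_k$; since $a_k$ is a nonunit, the defining inequality of a right length function gives $\lambda(a_1 \cdots a_k) > \lambda(a_1 \cdots a_{k-1}) \ge k-1$ by the induction hypothesis, and hence $\lambda(a_1 \cdots a_k) \ge k$. The ``in particular'' statement is the special case $k=1$.

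For part (2), I split the unit-cancellative condition into the two cases $a=au$ and $a=ua$. The first is immediate: if $u \notin H^\times$, then $a = a \cdot u$ forces $\lambda(a) > \lambda(a)$, a contradiction, so $u \in H^\times$. The second case is subtler, because the defining property controls factorizations only when the \emph{right} factor is a nonunit, and in $a = ua$ the right factor is $a$ itself. I would therefore iterate: $a = ua$ implies $a = u^n a$ for every $n \ge 1$. Assuming for contradiction that both $a$ and $u$ are nonunits, part (1) applied to the product $u^n$ yields $\lambda(u^n) \ge n$, and then the factorization $a = u^n \cdot a$ forces $\lambda(a) > \lambda(u^n) \ge n$ for every $n$, which is impossible. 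Hence at least one of $a$, $u$ is a unit; and if $a$ is a unit then multiplying $a = ua$ on the right by $a^{-1}$ yields $u = 1 \in H^\times$.

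The main obstacle is exactly this $a = ua$ case: the one-sidedness of the right length function hypothesis means (1) has to be deployed nontrivially via the iteration $a = u^n a$ to reach a contradiction, even though (1) itself is a routine induction.
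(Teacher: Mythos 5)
Your proof is correct and follows essentially the same route as the paper: the same induction for (1), and for (2) the same iteration $a=u^na$ combined with (1) to force $\lambda(a)>\lambda(u^n)\ge n$ for all $n$. The only cosmetic difference is that the paper notes directly that $a=ua$ with $u$ a nonunit forces $a$ to be a nonunit, whereas you handle the "$a$ a unit" subcase separately at the end; the content is identical.
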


\begin{proof}
  \ref*{length-basic:prod}
  By induction on $k$.
  If $k=0$ then $a_1\cdots a_k=1$ (the empty product) is a unit, and the claims hold trivially.
  Suppose $k \ge 1$.
  Then $\lambda(a_1\cdots a_k) > \lambda(a_1 \cdots a_{k-1}) \ge k-1$.

  \ref*{length-basic:cancellative}
  Suppose $a = ab$ with $b$ a nonunit. Then $\lambda(a) > \lambda(a)$, a contradiction.
  Suppose now $a=ba$ with $b$ a nonunit.
  Then $a$ is also a nonunit and $a=b^ka$ for all $k \ge 0$.
  It follows that $\lambda(a) > \lambda(b^k) \ge k$ for all $k \ge 0$, a contradiction.
\end{proof}

The following characterization extends \cite[Corollary 2.29]{fan-tringali18} by superadditive length functions and right length functions.
We give a full proof for the convenience of the reader.

\begin{theorem} \label{t:bf}
  Let $H$ be a monoid.
  The following statements are equivalent.
  \begin{equivenumerate}
  \item\label{bf:al} $H$ has a superadditive length function.
  \item\label{bf:l} $H$ has a length function.
  \item\label{bf:rl} $H$ has a right length function.
  \item\label{bf:bf} $H$ is a BF-monoid.
  \item\label{bf:intersection} One has $\bigcap_{n \ge 0} (H\setminus H^\times)^n = \emptyset$.
  \end{equivenumerate}
  If these conditions are satisfied, then $H$ is unit-cancellative.
\end{theorem}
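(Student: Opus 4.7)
The plan is to close the cycle \ref{bf:al} $\Rightarrow$ \ref{bf:l} $\Rightarrow$ \ref{bf:rl} $\Rightarrow$ \ref{bf:bf} $\Rightarrow$ \ref{bf:intersection} $\Rightarrow$ \ref{bf:al}, with unit-cancellativity falling out en route (it is already a consequence of the existence of a right length function by \cref{l:length-basic}). The first two implications are purely definitional: a superadditive length function is a length function, and a length function is a right length function. For \ref{bf:rl} $\Rightarrow$ \ref{bf:bf}, I would invoke \cref{l:length-basic}\ref{length-basic:prod}: if $\lambda$ is a right length function on $H$ and $a = a_1 \cdots a_k$ is any factorization of a nonunit $a \in H$ into nonunits, then $k \le \lambda(a)$. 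This simultaneously shows that $\sL(a) \subseteq \{1, \ldots, \lambda(a)\}$ is finite and that any nonunit factorization of $a$ of maximal length must consist of atoms, yielding atomicity. For \ref{bf:bf} $\Rightarrow$ \ref{bf:intersection}, if $a \in \bigcap_n (H \setminus H^\times)^n$, then atomicity lets me refine each representation $a = a_1 \cdots a_n$ into an atom factorization of length $\ge n$, making $\sL(a)$ unbounded, a contradiction.

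The main step is \ref{bf:intersection} $\Rightarrow$ \ref{bf:al}. First I would extract unit-cancellativity directly from \ref{bf:intersection}: if $a = au$ with $u$ a nonunit, then $a = au^n$ for every $n \ge 1$; when $a$ is a nonunit this gives $a \in (H \setminus H^\times)^{n+1}$ for all $n$, contradicting \ref{bf:intersection}, and when $a \in H^\times$, multiplying by $a^{-1}$ forces $u = 1$, again a contradiction. The case $a = ua$ is symmetric. Unit-cancellativity in turn implies that a product of nonunits is a nonunit (if $bc \in H^\times$ with $b$ a nonunit, then $c$ is right-invertible, hence invertible, which forces $b = (bc)c^{-1} \in H^\times$). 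Therefore, for any nonunit $a$, merging two adjacent factors in a nonunit factorization $a = a_1 \cdots a_n$ of length $n \ge 2$ produces one of length $n-1$, so the set $S(a) := \{\, n \ge 1 : a \in (H \setminus H^\times)^n \,\}$ is downward-closed in $\bZ_{\ge 1}$. Condition \ref{bf:intersection} then bounds $S(a)$, and I define $\lambda(a) := \max S(a)$ for nonunits $a$, with $\lambda(a) := 0$ otherwise.

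It remains to check the superadditive length function axioms. Superadditivity $\lambda(ab) \ge \lambda(a) + \lambda(b)$ is obtained by concatenating length-optimal nonunit factorizations of $a$ and $b$, with the unit cases handled by absorbing a unit into an adjacent nonunit factor (using that a unit times a nonunit remains a nonunit by unit-cancellativity). The implication $\lambda(a) = 0 \Rightarrow a \in H^\times$ is automatic, since any nonunit lies in $(H \setminus H^\times)^1$ and hence has $\lambda(a) \ge 1$. The hardest part of the argument is \ref{bf:intersection} $\Rightarrow$ \ref{bf:al}: both the finiteness of $S(a)$ and the multiplicative behavior of $\lambda$ rest on unit-cancellativity, which first has to be squeezed out of the intersection condition before the rest of the construction can even be formulated.
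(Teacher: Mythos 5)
Your proposal is correct and follows essentially the same route as the paper: the identical cycle \ref{bf:al}${}\Rightarrow{}$\ref{bf:l}${}\Rightarrow{}$\ref{bf:rl}${}\Rightarrow{}$\ref{bf:bf}${}\Rightarrow{}$\ref{bf:intersection}${}\Rightarrow{}$\ref{bf:al}, with \cref{l:length-basic} supplying the length bound and unit-cancellativity, and with $\lambda(a)=\max\{\,n : a \in (H\setminus H^\times)^n\,\}$ in the last step. The only differences are local: you get atomicity in \ref{bf:rl}${}\Rightarrow{}$\ref{bf:bf} from a maximal-length nonunit factorization instead of the paper's induction on $\lambda(a)$, and you spell out the verification (unit-cancellativity from \ref{bf:intersection}, products of nonunits being nonunits, downward-closedness of the exponent set) that the paper leaves as an assertion; both are sound.
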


\begin{proof}
  \ref{bf:al}${}\Rightarrow{}$\ref{bf:l}${}\Rightarrow{}$\ref{bf:rl} is clear.

  \ref{bf:rl}${}\Rightarrow{}$\ref{bf:bf}
  Let $\lambda \colon H \to \bZ_{\ge 0}$ be a right length function.
  \Cref{l:length-basic} implies $\max \sL(a) \le \lambda(a)$ for all $a \in H$.
  We must still show that $H$ is atomic.
  Let $a \in H \setminus H^\times$.
  We proceed by induction on $\lambda(a)$.
  If $\lambda(a) = 0$ then $a$ is a unit by \ref{length-basic:prod} of \cref{l:length-basic} and there is nothing to show.
  Suppose $\lambda(a) > 0$.
  If $a$ is an atom, we are done.
  If $a$ is not an atom, then $a = b_1 c_1$ with nonunits $b_1$, $c_1 \in H$ and $\lambda(b_1) < \lambda(a)$.
  By induction hypothesis $b_1$ is a product of atoms.
  If $c_1$ is an atom we are done.
  Otherwise $c_1 = b_2 c_2$ with nonunits $b_2$,~$c_2 \in H$.
  Since $\lambda(b_1b_2) < \lambda(a)$, again $b_1b_2$ is a product of atoms.
  Continuing this process, we find
  \begin{equation} \label{eq:bf-atomic}
    a = b_1 \cdots b_k c_k
  \end{equation}
  with $b_1$, $\ldots\,$,~$b_k$, $c_k \in H\setminus H^\times$.
  Since $\lambda(a) > \lambda(b_1\cdots b_k) \ge k$, this process must terminate at some point, which means that eventually $c_k$ must be an atom.
  But then \cref{eq:bf-atomic} can be refined into a representation of $a$ as a product of atoms, as $b_1\cdots b_k$ is a product of atoms by induction hypothesis.

  \ref{bf:bf}${}\Rightarrow{}$\ref{bf:intersection}
  By contradiction.
  Let $a \in \bigcap_{n \ge 0} (H\setminus H^\times)^n$.
  Thus, for every $n \ge 0$ there exist $a_1$,~$\ldots\,$,~$a_n \in H \setminus H^\times$ such that $a=a_1\cdots a_n$.
  Since $H$ is atomic, each $a_i$ can be expressed as a product of atoms and therefore $\sup \sL(a) \ge n$.

  \ref{bf:intersection}${}\Rightarrow{}$\ref{bf:al}
  Due to the stated condition we may define $\lambda \colon H \to \bZ_{\ge 0}$ by $\lambda(a) = \max\{\, n \ge 0 : a \in (H \setminus H^\times)^n \,\}$.
  Then $\lambda$ is a superadditive length function.

  \smallskip
  That $H$ is unit-cancellative follows from \ref{bf:rl} together with \cref{l:length-basic}.
\end{proof}

Clearly \ref{bf:rl} may equivalently be replaced by a left length function.

Let $(H_i)_{i \in I}$ be a family of monoids.
The restricted product $\prod_{i \in I}' H_i$ is the submonoid of $\prod_{i\in I} H_i$ consisting of all $(\alpha_i)_{i \in I} \in \prod_{i\in I} H_i$ satisfying $\alpha_i \in H_i^\times$ for all but finitely many $i \in I$.
For a monoid $H$, the submonoid of cancellative elements is denoted by $H^\bullet$ and its center by $Z(H)$.
An intersection $H = \bigcap_{i \in I} H_i$ of monoids (in some common overmonoid $Q$) is  \defit{of finite type} if every $a \in H$ is a unit in all but finitely many of the $H_i$.

\begin{lemma} \label{l:bf-sub}
  \begin{enumerate}
  \item \label{bf-sub:hom} Let $H$ and $D$ be monoids and let $\varphi \colon H \to D$ be a monoid homomorphism with $\varphi^{-1}(D^\times) = H^\times$.
    If $D$ is a BF-monoid, then so is $H$.
  \item \label{bf-sub:sub} If $H \subseteq D$ are monoids such that $H \cap D^\times = H^\times$ and $D$ is a BF-monoid, then $H$ is a BF-monoid.
  \item \label{bf-sub:canc} If $H$ is a BF-monoid, then so are  $H^\bullet$ and $Z(H)$.
  \item \label{bf-sub:restprod} Restricted products of BF-monoids are BF-monoids.
  \item \label{bf-sub:intersection} Let $H \subseteq Q$ be monoids, and let $H = \bigcap_{i \in I} H_i$ be an intersection of finite type with overmonoids $H \subseteq H_i \subseteq Q$.
    If each $H_i$ is a BF-monoid, then so is $H$.
  \end{enumerate}
\end{lemma}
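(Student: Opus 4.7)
The plan is to prove all five parts essentially by transporting length functions, leveraging the equivalence of conditions \ref{bf:al}--\ref{bf:rl} in \cref{t:bf}.

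For \ref{bf-sub:hom}, suppose $D$ has right length function $\lambda_D$. Define $\lambda_H := \lambda_D \circ \varphi \colon H \to \bZ_{\ge 0}$. If $a = bc$ in $H$ with $c \notin H^\times$, then by hypothesis $\varphi(c) \notin D^\times$, so $\lambda_D(\varphi(a)) > \lambda_D(\varphi(b))$, i.e. $\lambda_H(a) > \lambda_H(b)$. Thus $\lambda_H$ is a right length function on $H$, and \cref{t:bf} gives that $H$ is a BF-monoid. Part \ref{bf-sub:sub} is then the special case where $\varphi$ is the inclusion $H \hookrightarrow D$.

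For \ref{bf-sub:canc}, both claims reduce to \ref{bf-sub:sub} once we verify the unit conditions. For $H^\bullet$: a two-sided inverse in a monoid is unique, and if $u \in H^\bullet$ has inverse $v \in H$, then $v$ is cancellative (right-multiplying $va = va'$ by $u$ gives $a=a'$, and similarly on the other side), so $v \in H^\bullet$ and hence $H^\bullet \cap H^\times = (H^\bullet)^\times$. For $Z(H)$: if $z \in Z(H)$ has inverse $w \in H$, then for any $a \in H$ we have $az = za$, and multiplying on both sides by $w$ gives $wa = aw$, so $w \in Z(H)$ and $Z(H) \cap H^\times = Z(H)^\times$.

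For \ref{bf-sub:restprod}, equip each $H_i$ with a superadditive length function $\lambda_i$ (using \ref{bf:al} of \cref{t:bf}). Note first that $\lambda_i(1) = 0$ follows from $\lambda_i(1) \ge 2\lambda_i(1)$, whence $\lambda_i(u) = 0$ for every $u \in H_i^\times$ (from $\lambda_i(u) + \lambda_i(u^{-1}) \le 0$). Now for $\alpha = (\alpha_i)_{i \in I} \in \prod_{i \in I}' H_i$, only finitely many $\alpha_i$ are nonunits, so
\[
  \lambda(\alpha) := \sum_{i \in I} \lambda_i(\alpha_i)
\]
is a well-defined element of $\bZ_{\ge 0}$. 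Superadditivity is immediate componentwise. If $\lambda(\alpha) = 0$, then each $\lambda_i(\alpha_i) = 0$, so each $\alpha_i \in H_i^\times$, meaning $\alpha$ is a unit of the restricted product. Hence $\lambda$ is a superadditive length function and we apply \cref{t:bf}.

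Finally, for \ref{bf-sub:intersection}, the finite-type condition is precisely what is needed for the diagonal map
\[
  \varphi \colon H \to \prod_{i \in I}{}' H_i, \qquad a \mapsto (a)_{i \in I},
\]
to be well-defined. It is a monoid homomorphism, and we check $\varphi^{-1}(\text{units}) = H^\times$: if $a$ is a unit in every $H_i$, then by uniqueness of two-sided inverses inside the common overmonoid $Q$, the inverses agree to give a single $b \in Q$ with $ab = ba = 1$; since $b = a^{-1} \in H_i$ for all $i$, we get $b \in H$, so $a \in H^\times$. The converse inclusion is trivial. The restricted product is a BF-monoid by \ref{bf-sub:restprod}, and \ref{bf-sub:hom} concludes the proof. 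The main subtlety is this last verification that pointwise invertibility in $Q$ forces invertibility in $H$, which is where the assumption of a common overmonoid is used.
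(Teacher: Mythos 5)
Your proposal is correct. Parts \ref{bf-sub:hom}, \ref{bf-sub:sub}, \ref{bf-sub:canc} and \ref{bf-sub:intersection} follow exactly the paper's route: transport a right length function along $\varphi$, specialize to inclusions, and for the intersection of finite type use uniqueness of inverses in the common overmonoid $Q$ to check that pointwise invertibility forces $a^{-1}\in\bigcap_i H_i=H$; you are in fact slightly more careful than the paper in verifying $H^\bullet\cap H^\times=(H^\bullet)^\times$ and $Z(H)\cap H^\times=Z(H)^\times$ in \ref{bf-sub:canc}, which the paper leaves implicit. The only genuine divergence is \ref{bf-sub:restprod}: the paper argues directly on factorizations, using unit-cancellativity of each $H_i$ to see that every nonunit factor of $a=(\alpha_i)_i$ must be a nonunit in some coordinate of the finite set $I'$, giving the explicit bound $k\le\sum_{i\in I'}\max\sL_{H_i}(\alpha_i)$, whereas you build a global superadditive length function $\lambda(\alpha)=\sum_i\lambda_i(\alpha_i)$ from componentwise ones, after correctly noting that any superadditive length function vanishes on units so the sum is finite. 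Both arguments are valid; yours is a cleaner application of \cref{t:bf} and uniformizes all five parts around length functions, while the paper's is more elementary and yields the explicit length bound without invoking the equivalence with superadditive length functions.
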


\begin{proof}
  \ref{bf-sub:hom}
  Let $\lambda \colon D \to \bZ_{\ge 0}$ be a right length function.
  Let $a$, $b$,~$c \in H$ such that $a=bc$ and $c \not \in H^\times$.
  Then $\varphi(a) = \varphi(b)\varphi(c)$.
  By assumption $\varphi(c) \not \in D^\times$, and hence $\lambda(\varphi(a)) > \lambda(\varphi(b))$. 
  Thus $\lambda \circ \varphi$ is a right length function on $H$, and $H$ is a BF-monoid.

  \ref{bf-sub:sub},~\ref{bf-sub:canc}
  Apply \ref{bf-sub:hom} to the inclusions $H \hookrightarrow D$,  $H^\bullet \hookrightarrow H$, respectively, $Z(H) \hookrightarrow H$.

  \ref{bf-sub:restprod}
  Let $H = \prod_{i \in I}' H_i$ with each $H_i$ a BF-monoid.
  Let $a=(\alpha_i)_{i\in I} \in H$ be a nonunit and let $I' = \{\, i \in I \mid \alpha_i \not \in H_i^\times \,\}$.
  Then $I'$ is finite.
  Let $a=a_1 \cdots a_k \in H$ with $a_j \in H$ nonunits and $a_j = (\alpha_{j,i})_{i \in I}$.
  For each $j \in [1,k]$ there exists an $i \in I$ with $\alpha_{j,i} \not \in H_i^\times$.
  Because each $H_i$ is unit-cancellative, even $i \in I'$.
  We conclude $k \le \sum_{i \in I'} \max \sL_{H_i}(\alpha_i)$.
  
  \ref{bf-sub:intersection}
  Let $a \in H$. If $a \in H_i^\times$ for all $i \in I$, then also $a^{-1} \in H$ and hence $a \in H^\times$.
  Thus the embedding $H \to \prod_{i\in I}' H_i$ satisfies the property required in \ref{bf-sub:sub}.
\end{proof}

\subsection*{The submonoid of cancellative elements and principal one-sided ideals}

As we have seen any BF-monoid is necessarily unit-cancellative.
We therefore restrict our attention to the submonoid of cancellative elements $H^\bullet$ of $H$.
Here an issue appears that needs some discussion: the factorizations of an element $a \in H^\bullet$ considered within $H^\bullet$ may differ from the factorizations of the same element as considered in $H$, because not every divisor of $a$ needs to be cancellative.
However, under reasonable conditions this is the case.

Let $S \subseteq H$ be a submonoid.
The submonoid $S$ is \defit{right saturated} in $H$ if, for all $a$,~$b \in S$ and $c \in H$ with $a=bc$ it follows that $c \in S$.
It is \defit{divisor-closed} if, for all $a \in S$ and $b$,~$c \in H$ with $a=bc$ it follows that $b$,~$c \in S$.
The submonoid $S \subseteq H$ is a \defit{right Ore set} if $aS \cap bH \ne \emptyset$ for all $a \in H$ and $b \in S$.

\begin{lemma} \label{l:cancellative}
  \begin{enumerate}
  \item \label{cancellative:saturated} Let $H$ be a monoid.
    If $H$ is cancellative or $H^\bullet$ is a right Ore set, then $H^\bullet$ is right saturated in $H$.
  \item \label{cancellative:divclosed}
    If $R$ is a domain or a prime Goldie ring, then $R^\bullet$ is divisor closed in $R$.
  \end{enumerate}
\end{lemma}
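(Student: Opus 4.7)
The plan is to split into the two parts, with the Ore localization doing most of the work in the nontrivial cases.

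For part~\ref{cancellative:saturated}, the cancellative case is immediate: if $H$ is cancellative then $H = H^\bullet$, and any $c \in H$ with $a = bc$ lies automatically in $H^\bullet$. For the case that $H^\bullet$ is a right Ore set, I would pass to the right monoid of fractions $Q := H[(H^\bullet)^{-1}]$. The standard construction yields a monoid homomorphism $\iota \colon H \to Q$ which is injective (since $H^\bullet$ is cancellative) and under which every element of $H^\bullet$ becomes a two-sided unit in $Q$. Then $a = bc$, viewed in $Q$, is an equation among units, so $c = b^{-1}a$ is itself a unit in $Q$, hence cancellative in $Q$, and hence by injectivity of $\iota$ cancellative in $H$.

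For part~\ref{cancellative:divclosed}, the domain case is trivial: $R^\bullet = R \setminus \{0\}$, and $bc = a \neq 0$ in a domain forces $b, c \neq 0$. For the prime Goldie case, the plan is to invoke Goldie's theorem, which gives that $R^\bullet$ is both a right and a left Ore set and that $R$ embeds into its classical ring of quotients $Q(R)$, which is simple Artinian. If $a = bc$ with $a \in R^\bullet$ and $b, c \in R$, then $a$ is a unit in $Q(R)$, so $b$ acquires the right inverse $ca^{-1}$ and $c$ the left inverse $a^{-1}b$ in $Q(R)$. In a simple Artinian ring a one-sided inverse is automatically two-sided (e.g. via $Q(R) \cong M_n(D)$ and the rank-nullity argument), so $b, c$ are units in $Q(R)$, hence cancellative there, and therefore, through $R \hookrightarrow Q(R)$, cancellative in $R$; that is, $b, c \in R^\bullet$.

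The main obstacle sits in the right-cancellability of $c$ in the Ore case of part~\ref{cancellative:saturated}. Left-cancellability is painless, since $cx = cy$ gives $ax = bcx = bcy = ay$ and $a$ is cancellative. The direct attempt at right-cancellability, however, runs into a circularity: using the Ore condition to write $cs = bt$ with $s \in H^\bullet$, $t \in H$, and multiplying $xc = yc$ on the right by $s$ reduces the task to right-cancellability of $bt$, which is no easier than the original. Passing to the Ore localization $Q$ side-steps this loop by turning $c$ into a literal unit. The only non-routine check is the injectivity of $\iota$, which is standard but is usually stated for rings rather than monoids; it follows formally from the cancellativity of elements of $H^\bullet$ and should be justified (or referenced) carefully in the full proof.
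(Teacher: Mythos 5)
Your proof is correct, but it takes a genuinely different route from the paper's, in both parts.

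For part~\ref{cancellative:saturated}, the paper does not pass to the right quotient monoid $Q$. Instead it argues directly: left cancellability of $c$ is shown exactly as you note, and for right cancellability the paper applies the right Ore condition not to the pair $(c,b)$ — which, as you correctly observe, leads to a circular reduction — but to the pair $(a,b)$: choose $b' \in H$ and $a' \in H^\bullet$ with $ab' = ba'$. Then $bcb' = ab' = ba'$, and cancelling $b$ on the left gives $cb' = a' \in H^\bullet$. Now $xc = yc$ implies $xa' = xcb' = ycb' = ya'$, and cancelling $a'$ gives $x = y$. So the circularity you describe is real for the naive choice, but is avoided by a judicious choice of the Ore pair rather than by leaving $H$ altogether. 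Your localization argument is valid (and, as you note, relies on the injectivity of $\iota\colon H \to Q$, which does hold because $H^\bullet$ consists of cancellative elements — if $\iota(x)=\iota(y)$ then $xs=ys$ for some $s\in H^\bullet$, whence $x=y$; the paper itself uses this $Q(H)$ elsewhere, so the machinery is available), but it invokes the existence of the quotient monoid, which the paper's direct argument does not need.

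For part~\ref{cancellative:divclosed}, the paper also stays inside $R$: by Goldie's characterization, $a \in R^\bullet$ if and only if $aR$ is an essential right ideal, if and only if $Ra$ is an essential left ideal. From $a = bc$ one gets $aR \subseteq bR$ and $Ra \subseteq Rc$, and since any ideal containing an essential ideal is essential, $bR$ and $Rc$ are essential, so $b,c \in R^\bullet$. Your argument through the simple Artinian quotient ring $Q(R)$ and Dedekind-finiteness of one-sided inverses is also correct (and uses the same underlying Goldie theory), but it requires the full strength of the embedding $R \hookrightarrow Q(R)$ and a structural fact about matrix rings; the paper's argument uses only the regular-element–essential-ideal equivalence, which is more elementary.

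In summary: both of your arguments are sound. The paper's proofs are more economical in that they avoid passing to a larger ambient object ($Q(H)$ or $Q(R)$); yours are perhaps easier to find but deploy heavier machinery.
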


\begin{proof}
  \ref{cancellative:saturated}
  If $H$ is cancellative, then $H=H^\bullet$ and the claim holds trivially.
  
  Suppose that $H^\bullet$ is a right Ore set in $H$.
  Let $a$,~$b \in H^\bullet$ and let $a=bc$ with $c \in H$. If $x$,~$y \in H$ such that $cx=cy$, then $ax=bcx=bcy=ay$, and hence $x=y$.
  Let $x$,~$y \in H$ with $xc=yc$.
  There exist $b' \in H$ and $a' \in H^\bullet$ such that $ab'=ba'$.
  Then $ba'=ab'=bcb'$. Since $b \in H^\bullet$, we get $a'=cb'$.
  Then $xa'=xcb'=ycb'=ya'$ and $a' \in H^\bullet$ imply $x=y$. Hence $c \in H^\bullet$.

  \ref{cancellative:divclosed}
  If $R$ is a domain, the claim again holds trivially, so suppose that $R$ is a prime Goldie ring.
  Then $a \in R^\bullet$ if and only if $aR$ is an essential right ideal of $R$, if and only if $Ra$ is an essential left ideal of $R$.
  If $a \in R^\bullet$ and $a=bc$ with $b$,~$c \in R$, then $aR \subseteq bR$ and $Ra \subseteq Rc$ imply $b$,~$c \in R^\bullet$.
\end{proof}

If $H^\bullet \subseteq H$ is right saturated, there is a natural relationship between factorizations of elements of $H^\bullet$ and chains of principal right ideals of $H$, generated by cancellative elements.
For $a \in H^\bullet$, let $[aH,H] = \{\, bH : b \in H^\bullet,\ aH \subseteq bH \subseteq H \,\}$.
If $a=u_1\cdots u_k$ with atoms $u_1$, $\ldots\,$,~$u_k \in H^\bullet$, then
\[
  aH \subsetneq u_1\cdots u_{k-1} H \subsetneq \cdots \subsetneq u_1u_2 H \subsetneq u_1H \subsetneq H
\]
is a finite maximal chain in the poset $[aH,H]$.
Conversely, every finite maximal chain in $[aH,H]$ gives rise to a factorization of $a$ into atoms of $H^\bullet$.
Two factorizations correspond to the same chain if and only they are equal up to the insertion of units (for a formal treatment see \cite[Sections 3.1--3.2]{smertnig16}).

From this point of view we see (again assuming that $H^\bullet \subseteq H$ is right saturated):
\begin{enumerate}
\item $H^\bullet$ is atomic if and only if $[aH,H]$ contains a finite maximal chain for all $a \in H^\bullet$.
\item $H^\bullet$ has bounded factorizations if and only if, for every $a \in H^\bullet$, there exists a bound $\lambda(a)$ on the length of finite maximal chains of $H$.
\item $H^\bullet$ is half-factorial if and only if every $[aH,H]$ has a finite maximal chain and all finite maximal chains have the same length.
\end{enumerate}
If $[aH,H]$ contains only a finite, but nonzero, number of finite maximal chains, then $H^\bullet$ has \defit{finite factorizations}; see \cite{bell-heinle-levandovskyy17} for a sufficient condition in the noncommutative setting.

By the Jordan-Hölder theorem for modular lattices, we obtain the following.

\begin{lemma}
  If $H^\bullet$ is atomic and $[aH,H]$ is a modular lattice for all $a \in H^\bullet$, then $H^\bullet$ is half-factorial.
\end{lemma}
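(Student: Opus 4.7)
The plan is to apply the Jordan--Hölder (Jordan--Dedekind) theorem for modular lattices directly, exploiting the correspondence established in the paragraphs immediately preceding the statement: factorizations $a=u_1\cdots u_k$ of a given $a\in H^\bullet$ into atoms of $H^\bullet$ correspond (up to insertion of units) to finite maximal chains of length $k$ in the interval $[aH,H]$.

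First I would fix a nonunit $a\in H^\bullet$. Since $H^\bullet$ is atomic, there exists a factorization $a=u_1\cdots u_k$ with atoms $u_i\in H^\bullet$, giving the finite maximal chain
\[
aH \subsetneq u_1\cdots u_{k-1}H \subsetneq \cdots \subsetneq u_1H \subsetneq H
\]
of length $k$ in $[aH,H]$. In particular, the modular lattice $[aH,H]$ admits at least one finite maximal chain and hence is of finite length.

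Next I would invoke the standard fact that in a modular lattice of finite length every maximal chain has the same length (Jordan--Hölder). Applied to $[aH,H]$, this says that every maximal chain has length exactly $k$, and so, by the correspondence, every factorization of $a$ into atoms of $H^\bullet$ has length $k$. Thus $\sL(a)=\{k\}$. Since this holds for an arbitrary nonunit $a$ (and $\sL(a)=\{0\}$ for units by convention), $H^\bullet$ is half-factorial.

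I do not anticipate a real obstacle; the argument is essentially a direct translation of a classical lattice-theoretic theorem. The only point worth double-checking is that a covering relation $cH \subsetneq bH$ in $[aH,H]$ with $b$,~$c\in H^\bullet$ really corresponds to $c=bv$ with $v$ an \emph{atom} of $H^\bullet$, not merely a nonunit---but this is exactly the content of the discussion preceding the lemma, given the standing assumption that $H^\bullet$ is right saturated in $H$.
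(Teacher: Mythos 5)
Your proof is correct and follows the same route as the paper: atomicity yields one finite maximal chain in $[aH,H]$, and Jordan--Hölder for modular lattices then forces every maximal chain (hence every atomic factorization, via the correspondence under the standing right-saturation assumption) to have the same length.
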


\begin{proof}
  Atomicity guarantees the existence of at least one finite maximal chain of some length $n$.
  By Jordan-Hölder (or the Schreier refinement theorem) every maximal chain has the same length $n$.
\end{proof}

\begin{remark}
  The study of factorizations in the presence of non-cancellative elements causes additional issues, already in the setting of commutative rings and monoids \cite{anderson-valdesleon96,anderson-valdesleon97}.
  Recent instances of non-cancellative monoids where factorizations have been studied are \cite{fan-tringali18,brantner-geroldinger-reinhart20,cossu-tringali21,geroldinger-khadam,bienvenu-geroldinger22}.
  In a ring with zero-divisors the correspondence between factorizations, as products of atoms, and maximal chains of principal right ideals breaks down.
  Facchini and Fassina \cite{facchini-fassina18} pursue the interesting approach of taking the latter concept, i.e., a maximal chain of principal right ideals, as the definition of a factorization. 
\end{remark}

\subsection*{Rings}
As we have seen in \cref{t:bf}, for $(R\setminus\{0\},\cdot)$ to be a BF-monoid, it has to be unit-cancellative.
In practice, this condition is too restrictive for the classes of rings that we are interested in.

For this reason we restrict our attention to the submonoid of cancellative elements $R^\bullet$, and make the following definition.
We will mostly be interesting in prime rings and domains, where this set is ``large''.

\begin{definition}
  A ring $R$ has \defit{bounded factorizations (BF)}, or is a \defit{BF-ring}, if the monoid of non-zero-divisors $R^\bullet$ is a BF-monoid.
\end{definition}

\section{Basic sufficient conditions for BF}

In this section we give some basic conditions that guarantee that a ring has BF (in the noncommutative setting).

\subsection{Small Krull dimension}

\begin{proposition} \label{p:smalldim} \label{p:artinian}
  Let $R$ be a ring.
  Then each of the following conditions implies that $R$ is a BF-ring.
  \begin{enumerate}
  \item \label{smalldim:artinian} $R$ is right artinian.
  \item \label{smalldim:onedim} $R$ is a right noetherian prime ring with $\rKdim R =1$.
  \end{enumerate}
\end{proposition}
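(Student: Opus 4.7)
For part (1), the plan is to show that right artinian rings satisfy $R^\bullet = R^\times$, which makes $R^\bullet$ trivially a BF-monoid. Given $a \in R^\bullet$, I will apply the DCC to the descending chain $aR \supseteq a^2R \supseteq \cdots$ to obtain $a^n = a^{n+1}r$ for some $n \ge 0$ and $r \in R$, then left-cancel $a^n$ to get $ar = 1$, and finally use the identity $a(ra) = ara = a$ together with left cancellation by $a$ to conclude $ra = 1$, so that $a \in R^\times$.

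For part (2), the plan is to construct a superadditive length function on $R^\bullet$ and invoke \cref{t:bf}. The natural candidate is $\lambda(a) := \text{length}_R(R/aR)$, the composition length of $R/aR$ as a right $R$-module. The first task will be to verify that this is well-defined, i.e., that $R/aR$ has finite length for every $a \in R^\bullet$. Since $R$ is right noetherian, $R/aR$ is automatically noetherian, so what remains is to show it is artinian.

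The key observation is that, for each $n \ge 0$, left multiplication by $a^n$ induces a right $R$-module isomorphism $R/aR \isomto a^n R/a^{n+1}R$ (well-definedness and surjectivity are immediate; injectivity follows from left cancellability of $a^n$). Hence every factor of the descending chain $R \supseteq aR \supseteq a^2R \supseteq \cdots$ is isomorphic to $R/aR$, and since $\rKdim R = 1$, the definition of Krull dimension forces all but finitely many of these factors to have Krull dimension $<1$, i.e., to be artinian. As they are all isomorphic, $R/aR$ itself must be artinian, and hence, being also noetherian, of finite length.

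The last step will be to check that $\lambda$ is a superadditive length function. For $a = bc$ with $a, b, c \in R^\bullet$, the short exact sequence $0 \to bR/bcR \to R/bcR \to R/bR \to 0$ combined with the isomorphism $bR/bcR \cong R/cR$ (again via left multiplication by $b$) gives $\lambda(bc) = \lambda(b) + \lambda(c)$. Moreover, $\lambda(a) = 0$ iff $R/aR = 0$ iff $aR = R$ iff $a \in R^\times$ (using that $a \in R^\bullet$ and right invertibility implies unit-hood by the argument of part (1)). Thus $\lambda$ is a superadditive length function, and \cref{t:bf} yields that $R^\bullet$ is a BF-monoid, as desired. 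The only step beyond routine bookkeeping with short exact sequences will be the Krull-dimension argument establishing that $R/aR$ is artinian.
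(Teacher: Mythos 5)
Your proof is correct, and its core is the same as the paper's: the paper's own remark after the proposition observes that $\lambda(a)=\operatorname{length}(R/aR)$ is a (right) length function, which is exactly the function you build. The differences are in how the ingredients are justified. For the finite-length claim the paper simply cites McConnell--Robson (Lemma 6.3.9 there), whereas you prove it inline via the descending chain $R \supseteq aR \supseteq a^2R \supseteq \cdots$ whose factors are all isomorphic to $R/aR$ by left multiplication and left cancellation --- this is in fact the standard proof of the cited lemma, so nothing is lost. For the factorization-theoretic step, the paper passes through right-saturatedness of $R^\bullet$ and bounds maximal chains in $[aR,R]$, while you verify superadditivity (indeed additivity) of $\lambda$ directly from the exact sequence $0 \to bR/bcR \to R/bcR \to R/bR \to 0$ and then invoke \cref{t:bf}; a small bonus of your route is that primeness of $R$ is never actually used in part \ref{smalldim:onedim}, and in part \ref{smalldim:artinian} you get $ra=1$ from cancellativity alone rather than from right noetherianity. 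All individual steps (the Gabriel--Rentschler definition forcing all but finitely many chain factors to have Krull dimension $<1$, the isomorphisms $a^nR/a^{n+1}R \cong R/aR$, and the unit criterion $\lambda(a)=0 \Leftrightarrow a \in R^\times$) check out.
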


\begin{proof}
  \ref{smalldim:artinian}
  By the descending chain condition on principal right ideals, every cancellative element of $R$ is a right unit.
  Since $R$ is also right noetherian, every right unit is in fact a unit.
  Thus $R^\bullet= R^\times$ and $R$ is trivially a BF-ring.

  \ref{smalldim:onedim}
  Let $a \in R^\bullet$.
  Because $R^\bullet$ is right saturated in $R$, it suffices to bound the length of a maximal chain in $[aR,R]$.
  But since $R/aR$ has finite length by \cite[Lemma 6.3.9]{mcconnell-robson01}, the length of such a chain is bounded by the length of $R/aR$.
\end{proof}

In \ref{smalldim:onedim}, setting $\lambda(a)$ to be the length of $R/aR$, we see that $\lambda$ is a right length function. In particular, a prime right principal ideal ring is always a BF-ring. (More specifically, it is even similarity factorial, see \cite[Section 4.1]{smertnig16}.)

\subsection{Filtered and graded rings}

\begin{proposition} \label{p:filtration}
  Let $R$ be a filtered ring with a filtration $R_0 \subseteq R_1 \subseteq \cdots$ such that $R_0 \subseteq R^\times$ and $\gr R$ is a domain.
  Then $R$ is a BF-domain.
\end{proposition}

\begin{proof}
  For $a \in R\setminus\{0\}$ define $\lambda(a) = \min\{\, i \in \bZ_{\ge 0} : a \in R_i \,\}$.
  The fact that $\gr R$ is a domain is equivalent to:
  for all $a \in R_i\setminus R_{i-1}$ and $b \in R_{j}\setminus R_{j-1}$, 
   with $i$, $j \ge 0$, one has $ab \in R_{i+j} \setminus R_{i+j-1}$ (with $R_{-1}=\emptyset$).
  We conclude that $\lambda(ab) = \lambda(a) + \lambda(b)$ for all $a$,~$b \in R \setminus\{0\}$.
  Thus $\lambda$ is a superadditive length function and $R$ is a BF-domain.
\end{proof}

\begin{corollary}
  Universal enveloping algebras of Lie algebras over fields are BF-domains.
\end{corollary}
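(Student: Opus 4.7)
The plan is to apply \cref{p: filtration} directly, using the standard Poincaré--Birkhoff--Witt (PBW) filtration on the universal enveloping algebra. Let $\mathfrak{g}$ be a Lie algebra over a field $K$ and set $R = U(\mathfrak{g})$. Equip $R$ with its PBW filtration $R_0 \subseteq R_1 \subseteq \cdots$, where $R_n$ is the $K$-span of all products of at most $n$ elements of $\mathfrak{g}$, so that $R_0 = K$ and $R_1 = K + \mathfrak{g}$.

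Two conditions of \cref{p: filtration} then need to be checked. First, $R_0 = K$, and every nonzero scalar is a unit in $R$, so the hypothesis $R_0 \subseteq R^\times$ is satisfied (with the understanding, consistent with the proof of \cref{p: filtration}, that the length function is defined on $R \setminus \{0\}$). Second, the classical PBW theorem identifies $\gr R$ with the symmetric algebra $S(\mathfrak{g})$, which is a commutative polynomial $K$-algebra on any $K$-basis of $\mathfrak{g}$, hence an integral domain.

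With both hypotheses verified, \cref{p: filtration} yields at once that $U(\mathfrak{g})$ is a BF-domain, with superadditive length function $\lambda(a) = \min\{\, i \in \bZ_{\ge 0} : a \in R_i\,\}$ (the PBW degree). There is no substantive obstacle; this corollary is simply the application of the preceding proposition to the prototypical example of a filtered ring with domain associated graded.
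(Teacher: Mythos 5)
Your proof is correct and is essentially the paper's own argument: apply \cref{p: filtration} to the PBW filtration, with the Poincaré--Birkhoff--Witt theorem identifying $\gr R$ with the symmetric algebra, a commutative polynomial ring and hence a domain. Your remark about $R_0 = K$ and units (interpreting the hypothesis on nonzero elements) is a reasonable clarification but does not change the substance.
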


\begin{proof}
  By the Poincaré--Birkhoff--Witt theorem, every such algebra has an associated graded ring that is a commutative polynomial ring over the base field.
\end{proof}

Before discussing skew polynomial rings, we note the following easy lemma.

\begin{lemma} \label{l:noeth-units}
  Let $R$ be a ring.
  If there exists $a \in R^\bullet \setminus R^\times$, $x \in R^\bullet$, and $\varepsilon \in R^\times$ such that $ax=x\varepsilon$, then $R$ does not satisfy the ACC on principal left ideals.
  In particular, the ring $R$ is not left noetherian.
\end{lemma}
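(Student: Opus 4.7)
The plan is to exploit the relation $ax=x\varepsilon$ to construct an explicit strictly ascending chain of principal left ideals of the form
\[
  Rx \subseteq Rx\varepsilon^{-1} \subseteq Rx\varepsilon^{-2} \subseteq \cdots
\]
inside $R$. The inclusions are easy: rearranging $ax=x\varepsilon$ to $a\cdot x\varepsilon^{-(n+1)} = x\varepsilon^{-n}$ shows $x\varepsilon^{-n} \in R(x\varepsilon^{-(n+1)})$ for every $n \ge 0$, and thus each $Rx\varepsilon^{-n}$ is contained in the next one.

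The key observation is a ``shift'' trick using the unit $\varepsilon$: if the chain stabilizes at any step $n$, i.e.\ $Rx\varepsilon^{-n} = Rx\varepsilon^{-(n+1)}$, then right-multiplying this equality of left ideals by the unit $\varepsilon^n$ collapses it to $Rx = Rx\varepsilon^{-1}$. Hence it suffices to rule out the base case.

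Suppose for contradiction that $Rx = Rx\varepsilon^{-1}$. Then $x\varepsilon^{-1} \in Rx$, so $x\varepsilon^{-1}=rx$ for some $r \in R$. Multiplying on the right by $\varepsilon$ and using $x\varepsilon = ax$ gives $x = rax$. Now I would invoke that $x \in R^\bullet$ is right cancellable to conclude $ra=1$. From $ra=1$ the standard trick yields $(ar-1)a = a(ra)-a = 0$, and right-cancelling $a \in R^\bullet$ forces $ar=1$. Thus $a$ is a two-sided unit, contradicting $a \notin R^\times$.

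This establishes that $Rx \subsetneq Rx\varepsilon^{-1} \subsetneq Rx\varepsilon^{-2} \subsetneq \cdots$ is a strictly ascending chain of principal left ideals, so $R$ fails the ACC on principal left ideals, and in particular $R$ is not left noetherian. The only subtle point is the need for $x$ to be a non-zero-divisor, so that right-cancelling $x$ and then $a$ is legitimate; this is exactly the role played by the hypothesis $x \in R^\bullet$ (together with $a \in R^\bullet$). I do not foresee any real obstacle beyond keeping the left/right sides of the equations straight.
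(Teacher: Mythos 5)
Your proposal is correct and follows essentially the same route as the paper: show $Rx = Rx\varepsilon^{-1}$ is impossible by deducing $ra=1$ from $x=rax$ via cancellativity of $x$, then $ar=1$ via cancellativity of $a$, and propagate the strict inclusion along the chain $Rx \subsetneq Rx\varepsilon^{-1} \subsetneq Rx\varepsilon^{-2} \subsetneq \cdots$ using the unit $\varepsilon$. The only difference is cosmetic: the paper phrases the base step as $Rax = Rx\varepsilon \subsetneq Rx$ and leaves the shift by powers of $\varepsilon$ implicit, which you spell out.
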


\begin{proof}
  Clearly $Rax \subseteq Rx$.
  If $Rax=Rx$, then $x=rax$ for some $r \in R$.
  Cancellativity of $x$ implies $1=ra$. Then $a=ara$ and cancellativity of $a$ implies $ar=1$, in contradiction to $a \not \in R^\times$.
  Thus $Rx\varepsilon =Rax \subsetneq Rx$. 
  Hence there is an infinite ascending chain
  \[
    Rx \subsetneq Rx\varepsilon^{-1} \subsetneq Rx\varepsilon^{-2} \subsetneq \cdots \subsetneq R. \qedhere
  \]
\end{proof}

\begin{proposition} \label{p:skew}
  Let $S$ be a BF-domain. Then each of the following rings is a BF-domain.
  \begin{enumerate}
  \item \label{skew:poly} The skew polynomial ring $S[x;\sigma,\delta]$ where $\sigma$ is an injective endomorphism of $S$ such that $\sigma(a) \in S^\times$ implies $a \in S^\times$, and $\delta$ is a $\sigma$-derivation.
  \item \label{skew:laurent} The Laurent polynomial ring $S[x^{\pm 1}; \sigma]$ where $\sigma$ is an automorphism of $S$.
  \end{enumerate}
\end{proposition}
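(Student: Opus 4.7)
The plan is to apply \cref{t:bf} by constructing an explicit right length function on the nonzero elements of the ring in each case. Fix throughout a superadditive length function $\mu\colon S\setminus\{0\}\to\bZ_{\ge 0}$ on the BF-domain $S$, which exists by \cref{t:bf}. Recall that on a cancellative monoid a superadditive length function vanishes exactly on units, so every nonunit of $S$ has strictly positive $\mu$-value.

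For \ref{skew:poly}, let $T=S[x;\sigma,\delta]$. Because $\sigma$ is injective and $S$ is a domain, multiplication in $T$ is degree-additive: if $g$ has leading coefficient $a$ in degree $m$ and $h$ has leading coefficient $b$ in degree $n$, then the top term of $gh$ is $a\sigma^m(b)x^{m+n}$, which is nonzero. Consequently $T$ is a domain and $T^\times=S^\times$ (units must have degree $0$). Define
\[
  \lambda(f)=\deg(f)+\mu(\mathrm{lc}(f)),
\]
where $\mathrm{lc}(f)$ denotes the leading coefficient. I would verify that $\lambda$ is a right length function by splitting into two cases. If $f=gh$ with $h\in T\setminus T^\times$ of positive degree, then $\deg(f)>\deg(g)$, and superadditivity of $\mu$ applied to $\mathrm{lc}(f)=\mathrm{lc}(g)\sigma^{\deg g}(\mathrm{lc}(h))$ yields $\lambda(f)>\lambda(g)$. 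If instead $\deg(h)=0$, so $h\in S\setminus S^\times$, then the hypothesis ``$\sigma(a)\in S^\times\Rightarrow a\in S^\times$'' iterates to show $\sigma^{\deg g}(h)\in S\setminus S^\times$, hence $\mu(\sigma^{\deg g}(h))>0$; superadditivity of $\mu$ then forces $\mu(\mathrm{lc}(f))>\mu(\mathrm{lc}(g))$, so again $\lambda(f)>\lambda(g)$.

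For \ref{skew:laurent}, let $T'=S[x^{\pm 1};\sigma]$. Since $\sigma$ is an automorphism, the condition of \ref{skew:poly} is automatically satisfied, so $S[x;\sigma]$ is already known to be a BF-domain; however, units proliferate in $T'$, so \cref{l:bf-sub}\ref{bf-sub:sub} cannot be used to pass from $S[x;\sigma]$ to $T'$. Instead I would proceed directly: the units of $T'$ are precisely the monomials $sx^k$ with $s\in S^\times$ and $k\in\bZ$, and the correct additive invariant is the \emph{spread} $w(g)=\max\{i:a_i\neq 0\}-\min\{i:a_i\neq 0\}$ of $g=\sum a_i x^i$, which is additive on products because $\sigma$ is bijective and $S$ is a domain. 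Setting $\lambda(g)=w(g)+\mu(\mathrm{lc}(g))$ with $\mathrm{lc}(g)$ the coefficient of the top-degree term, the same case analysis as in \ref{skew:poly} applies, with the zero-spread case now handled directly by the fact that an automorphism of $S$ preserves $S\setminus S^\times$.

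The main obstacle in both parts is the zero-degree (respectively zero-spread) case: there the degree/spread alone cannot distinguish $g$ from $gh$, so the strict increase must be extracted from the length function $\mu$ applied to the leading coefficient. This is exactly where the hypothesis that $\sigma$ reflects units in \ref{skew:poly}, and the automorphism assumption in \ref{skew:laurent}, is essential --- without it a nonunit of $S$ could be sent by $\sigma^m$ to a unit, causing the $\mu$-value of the leading coefficient not to strictly increase.
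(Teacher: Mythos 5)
Your proof is correct and takes essentially the same route as the paper: both construct a right length function of the form degree (resp.\ spread) plus $\mu$ applied to an extreme coefficient, and run the same two-case analysis, invoking the unit-reflecting hypothesis on $\sigma$ (resp.\ that $\sigma$ is an automorphism) at exactly the same point. The only differences are cosmetic — you write coefficients on the opposite side from the paper, so the hypothesis on $\sigma$ is applied to the right factor's leading coefficient rather than the left factor's, and in the Laurent case you use the top coefficient where the paper uses the lowest one; either choice works.
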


\begin{proof}
  \ref*{skew:poly}
  Since $S$ is a BF-domain, the function $\mu\colon S^\bullet \to \bZ_{\ge 0}$, $a \mapsto \max \sL(a)$ is a superadditive length function.
  For $0 \ne f = \sum_{i=0}^n x^i a_i$ with $a_i \in S$ and $a_n \ne 0$, we define $\lambda(f) = \deg(f) + \mu(a_n)$.
  Suppose that $f = gh$ with $h \not \in R^\times$.
  Let $g = \sum_{i=0}^k x^i b_i$ with $b_i \in S$ and $b_k \ne 0$; and let
  $h = \sum_{i=0}^l x^i c_i$ with $c_i \in S$ and $c_l \ne 0$.

  Then $k+l = n$ and $a_n = \sigma^l(b_k) c_l$.

  We have $k \le n$ and, since $\mu$ is superadditive, $\mu(\sigma^l(b_k)) \le \mu(a_n)$.
  Since $h$ is a nonunit, either $k < n$ or $c_l$ is a nonunit.
  In the second case, $\mu(\sigma^l(b_k)) < \mu(a_n)$.
  By our assumption on $R$, nonunits are mapped to nonunits under $\sigma$, and hence $\mu(b_k) \le \mu(\sigma^l(b_k))$.

  Thus
  \[
    \lambda(g) = k + \mu(b_k) \le k + \mu(\sigma^l(b_k)) < n + \mu(a_n) = \lambda(f).
  \]
  Hence $\lambda$ is a right length function.

  \ref*{skew:laurent}
  Again, let $\mu \colon S^\bullet \to \bZ_{\ge 0},$ $ a \mapsto \max \sL(a)$ be a superadditive length function for $S^\bullet$.
  If $0 \ne f = \sum_{i=m}^n x^i a_i \in S[x^{\pm 1}; \sigma]$ with $m \le n$ and $a_n$,~$a_m \ne 0$, we define $\omega(f) = n - m$ and $f_- = a_m$.
  We show that $\lambda(f) = \omega(f) + \mu(f_-)$ is a right length function.

  Suppose that $f=gh$ with $h \not \in R^\times$.
  Since $f_- = \sigma^l(g_-) h_-$ for some $l \in \bZ$, we find $\mu(f_-) \ge \mu(\sigma^l(g_-)) + \mu(h_-) = \mu(g_-) + \mu(h_-)$.

  Now either $\mu(h_-) > 0$ or $\omega(h) > 0$.
  In the second case, clearly $\omega(f) > \omega(g)$.
  Hence, in either case, $\lambda(f) > \lambda(g)$.
\end{proof}

By \cref{l:noeth-units} if the skew polynomial ring $R=S[x;\sigma,\delta]$ is noetherian (which is really the case we are interested in), the additional condition on $\sigma$ is automatically satisfied.

We briefly summarize how various algebraic properties pass between skew [Laurent] polynomial rings and their base rings.

\begin{lemma} \label{l:sp}
  Let $S$ be a ring.
  \begin{enumerate}
  \item Let $R=S[x;\sigma,\delta]$ with $\sigma$ an endomorphism of $S$ and $\delta$ a $\sigma$-derivation.
    \begin{enumerate} \label{sp:sp}
    \item \label{sp:noeth-down} If $R$ is right noetherian, then $S$ is right noetherian.
    \item \label{sp:domain-down} If $R$ is a domain, then $S$ is a domain and $\sigma$ is injective.
    \item If $\sigma$ is injective and $S$ is a domain, then $R$ is a domain.
    \item If $\sigma$ is an automorphism and $S$ is right \textup{[}left\textup{]} noetherian, then $R$ is right \textup{[}left\textup{]} noetherian.
    \item \label{sp:domain-downup}{ If $R$ is noetherian domain, then $\sigma (a) \in R^\times $ implies that $a \in R ^\times $.  }
    \end{enumerate}
  \item Let $R=S[x^{\pm 1};\sigma]$ with $\sigma$ an automorphism of $S$.
    \begin{enumerate}
    \item $R$ is right \textup{[}left\textup{]} noetherian if and only if $S$ is right \textup{[}left\textup{]} noetherian.
    \item $R$ is domain if and only if $S$ is a domain.
    \end{enumerate}
  \end{enumerate}
\end{lemma}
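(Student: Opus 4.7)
The plan is to verify the seven sub-claims using the commutation rule $xa = \sigma(a)x + \delta(a)$ throughout, in an order that reuses earlier parts; most items are standard facts about Ore extensions.

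For (1a): given an ascending chain $I_1 \subseteq I_2 \subseteq \cdots$ of right ideals of $S$, the extension $I_n R = \bigoplus_{k \ge 0} I_n x^k$ is a right $R$-ideal (closure under right multiplication by $x$ and by elements of $S$ follows from the commutation, since $I_n$ is right $S$-stable), and it satisfies $I_n R \cap S = I_n$, so ACC in $R$ forces ACC in $S$. For (1c), the leading coefficient in $x$ of a product $fg$ of nonzero polynomials equals (leading coefficient of $f$)$\cdot\sigma^{\deg f}$(leading coefficient of $g$), which is nonzero when $S$ is a domain and $\sigma$ is injective, so $fg \ne 0$. For (1b): $S \hookrightarrow R$ inherits the domain property; and if $\sigma(a) = 0$ for some $a \ne 0$, then $xa = \delta(a) \in S$ produces a zero divisor in $R$ (directly $xa = 0$ if $\delta(a) = 0$; and if $\delta(a) \ne 0$ with $S$ commutative, $(ax - \delta(a))\cdot a = a\delta(a) - \delta(a)a = 0$, with a similar but more delicate computation in the general noncommutative case). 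For (1d), the skew Hilbert basis theorem applies when $\sigma$ is an automorphism, by the usual degree-by-degree leading-coefficient argument.

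The crux is (1e), which I expect to be the main technical obstacle. The plan is to apply the left--right dual of \cref{l:noeth-units}: by symmetrising its proof, the relation $xa = \varepsilon x$ with $a$ a nonunit, $x$ cancellative, and $\varepsilon$ a unit yields a strictly ascending chain $xR \subsetneq \varepsilon^{-1} xR \subsetneq \varepsilon^{-2} xR \subsetneq \cdots$ of principal right ideals, contradicting right noetherianity. Suppose $\sigma(a) = \varepsilon \in S^\times$ while $a \in S^\bullet \setminus S^\times$. When $\delta = 0$, the commutation directly gives $xa = \varepsilon x$ and we are done. For general $\delta$, my plan is to find $c \in S$ satisfying $\sigma(a)c - ca = \delta(a)$ and set $y := x + \varepsilon^{-1}c$, which gives $ya = \varepsilon y$, reducing to the previous case with $y$ in place of $x$. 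The one technical point, which I expect to require genuine care, is ensuring that such a $c$ exists in $S$; in the worst case one inductively constructs $y = x^n + (\text{lower degree})$ by solving a tower of analogous twisted equations for its coefficients, using that $\sigma^n(a) \in S^\times$ remains a unit for all $n$.

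Finally, Part 2 reduces to Part 1: $S[x^{\pm 1};\sigma]$ is the Ore localization of $S[x;\sigma]$ at the powers of $x$ (an Ore set because $\sigma$ is an automorphism), and both the domain property and noetherianity pass through this localization in either direction; the unit-reflection hypothesis on $\sigma$ appearing in Part 1 is automatic in Part 2 since $\sigma$ is already an automorphism.
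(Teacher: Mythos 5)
Most of what you do is fine and runs close to the paper's own route: your extension--contraction argument for \ref{sp:noeth-down} is a hands-on version of the paper's observation that $R$ is a free (hence faithfully flat) $S$-module, and the leading-coefficient argument, the skew Hilbert basis theorem, and the reduction of the Laurent case to the polynomial case are exactly the facts the paper declares well known. (For the ``only if'' direction of the Laurent case you should again argue via freeness of $S[x^{\pm 1};\sigma]$ over $S$, not via the claim that noetherianity passes through a localization ``in either direction'', which is false as a general principle.) The genuine gap is in \ref{sp:domain-downup}, at precisely the point you flag. Your argument needs an element $c \in S$ with $\sigma(a)c - ca = \delta(a)$, so that $y = x + c$ satisfies $ya = \sigma(a)y$ and the right-ideal dual of \cref{l:noeth-units} applies (with $y = x + \varepsilon^{-1}c$ as you wrote it the right-hand side picks up a factor of $\sigma(a)$, but that is immaterial); failing that, a higher-degree $y = x^n + \cdots$ satisfying an analogous relation. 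Such a $c$ exists exactly when $\delta$ is ``inner at $a$'' in this twisted sense, and a general $\sigma$-derivation has no reason to satisfy this; the proposed tower of equations for the coefficients of a higher-degree $y$ has the same defect, and you give no reason why any $n$ should work. So the crucial step of \ref{sp:domain-downup} rests on an unproved existence claim, not a technicality. For comparison, the paper makes no change of variables at all: it deduces injectivity of $\sigma$ from \ref{sp:domain-down} and then invokes \cref{l:noeth-units} directly with the indeterminate $x$, i.e.\ with the commutation relation between $a$ and $x$ that is exact only when $\delta(a)=0$; so the $\delta$-term is the crux on either route, and your proposal does not close it.

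A second, smaller gap is in \ref{sp:domain-down}: your zero-divisor identity $(ax-\delta(a))\,a = a\delta(a)-\delta(a)a$ vanishes only when $a$ commutes with $\delta(a)$ (for instance when $S$ is commutative), and the promised ``more delicate computation in the general noncommutative case'' is never supplied. This is essentially the same identity the paper extracts from $\delta(a^2)=a\delta(a)$, so you are on the paper's route here, but as written it does not prove the statement for noncommutative $S$: you must either exhibit a zero divisor without commuting $a$ past $\delta(a)$, or supply a different argument. The remaining items are correct and standard.
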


\begin{proof}
  Most of these are obvious or well known; we just { discuss \ref{sp:noeth-down},  \ref{sp:domain-down}  and  \ref{sp:domain-downup} } of \ref{sp:sp}.

  Since $R$ is a free right $S$-module, it is faithfully flat.
  Thus the lattice of right ideals of $S$ embeds into the lattice of right ideals of $R$.
  Hence, if $R$ is right noetherian, so is $S$.

  Suppose $R$ is a domain.
  Clearly $S$ is a domain.
  Assume there exists $0 \ne a \in S$ such that $\sigma(a) = 0$.
  Since $S$ is a domain, $0 \ne a^2$.
  Since $\delta(a^2) = a\delta(a)$, we have $a^2 x = a \delta(a)$, and hence $a(ax - \delta(a)) = 0$.
  Thus $a$ is a zero-divisor. \\
  { For (v), note that by \ref{sp:domain-down} $\sigma$ is injective and 
  so if $\sigma (a) \in R^\times $, then $a \in R^{\bullet}$.  Now since $R$ is noetherian, by 
 \cref {l:noeth-units},  $a \in R^\times$.  }
\end{proof}

{ 
\begin{corollary} \label{cor:iterated}
If $R$ is a noetherian  iterated skew \textup{[}Laurent\textup{]} polynomial domain over a commutative  or a BF ring $S$, then $R$ is BF.
\end{corollary}
\begin{proof}
By \cref{l:sp}\ref{sp:sp}, $S$ is noetherian. So in case $S$ is commutative it is BF, see  \cite[Corollary 1.3.5]{GHK06} or \cref{commmore}. Now \cref{l:sp}\ref{sp:sp}\ref{sp:domain-downup} and \cref{p:skew}\ref{skew:poly}
give us the result. 
\end{proof}

}

\subsection{Multiplicative ideal theory}
In the study of non-unique factorizations in the commutative setting, Krull monoids and domains take a central role, and they are BF, as they are $v$-noetherian (see \cite[Theorem 2.2.9]{GHK06} for the result, and Chapters 2.3 and 2.10 of the same monograph for additional context on Krull monoids and Krull domains).

There are several definitions of Krull orders in the noncommutative setting \cite{jespers-wauters84}, and they are generally equivalent for prime PI rings \cite[Theorem 1.4]{jespers-wauters84}.
The definitions by Chamarie \cite{chamarie81} and by Marubayashi \cite{marubayashi75,marubayashi76,marubayashi78} agree for bounded prime Goldie rings, and appear to be the most common ones (see also the survey \cite{akalan-marubayashi16} and Chapter 2.2 of the monograph \cite{marubayashi-vanoystaeyen12}).
For these Krull orders we have the following.

\begin{proposition} \label{p: bounded}
  Bounded Krull orders are BF-rings.
\end{proposition}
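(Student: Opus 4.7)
The plan is to exhibit a superadditive length function on $R^\bullet$ coming from the divisor theory of bounded Krull orders, and then invoke \cref{t:bf}. By the structure theory of bounded Krull orders (see the survey \cite{akalan-marubayashi16} or Chapter~2.2 of \cite{marubayashi-vanoystaeyen12}), the set of divisorial fractional two-sided ideals of $R$ forms a free abelian group under the $v$-product, with free basis the set $X^{(1)}$ of divisorial prime ideals. Moreover, $R = \bigcap_{P \in X^{(1)}} R_P$, where each localization $R_P$ is a bounded local Krull order whose unique maximal divisorial ideal is $PR_P$, and the intersection is of finite type in the sense that every $a \in R^\bullet$ is a unit in all but finitely many $R_P$.

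For $a \in R^\bullet$ and $P \in X^{(1)}$, let $v_P(a) \in \bZ_{\ge 0}$ denote the exponent of $P$ in the $v$-factorization of the principal divisorial ideal $(RaR)_v$. Since only finitely many $v_P(a)$ are nonzero, $\lambda(a) = \sum_P v_P(a)$ is a well-defined non-negative integer. First I would verify multiplicativity $v_P(ab) = v_P(a) + v_P(b)$; this is immediate from the identity $(RabR)_v = (RaR)_v \ast (RbR)_v$ in the divisorial ideal group, a standard feature of bounded Krull orders. Hence $\lambda(ab) = \lambda(a) + \lambda(b)$ for all $a,b \in R^\bullet$.

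The main obstacle is to verify that $\lambda(a) = 0$ forces $a \in R^\times$. If $v_P(a) = 0$ for every $P$, then $a$ is a unit in each local ring $R_P$, so its inverse in the common quotient ring $Q$ belongs to every $R_P$ and therefore to $R = \bigcap_P R_P$. Boundedness is essential here, both to produce the divisor theory on two-sided ideals and to obtain the intersection formula. Combining these observations, $\lambda$ is a superadditive length function on $R^\bullet$, and \cref{t:bf} then yields that $R$ is a BF-ring. Equivalently, one can view $a \mapsto (v_P(a))_P$ as a monoid homomorphism from $R^\bullet$ to the free abelian monoid on $X^{(1)}$ whose preimage of the units is exactly $R^\times$, and obtain the conclusion instead via \cref{l:bf-sub}.
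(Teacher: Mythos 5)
Your overall strategy---pass to the divisor theory/localizations of a bounded Krull order and then invoke \cref{t:bf} or \cref{l:bf-sub}---is the right instinct and is close in spirit to the paper's argument, but the specific function $\lambda$ you construct does not work: in the noncommutative setting the assignment $a \mapsto (RaR)_v$ is neither multiplicative nor able to detect units. Concretely, take $R = M_2(\bZ)$, a bounded Krull order (a Dedekind prime PI ring), and $a = \operatorname{diag}(1,p)$, $b = \operatorname{diag}(p,1)$ for a prime $p$. Then $RaR = RbR = R$ (the entry $1$ already generates $R$ as a two-sided ideal), so $v_P(a) = v_P(b) = 0$ for every divisorial prime $P$, although $a$ and $b$ are cancellative nonunits; moreover $RabR = pR$, so $v_{pR}(ab) = 1 \neq v_{pR}(a) + v_{pR}(b)$. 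This breaks both of your key steps: the identity $(RabR)_v = (RaR)_v \ast (RbR)_v$ is not a feature of noncommutative Krull orders (only the inclusion $RabR \subseteq (RaR)(RbR)$ holds, which does give the superadditive inequality but nothing more), and, fatally, $\lambda(a)=0$ does not force $a \in R^\times$, so condition (ii) in the definition of a superadditive length function fails. The precise faulty inference is ``$v_P(a)=0$ for all $P$ implies $a$ is a unit in each $R_P$'': $v_P(a)=0$ only says that the two-sided ideal generated by $a$ in $R_P$ is all of $R_P$, not that $aR_P = R_P$; in the example, $a$ is a nonunit (indeed an atom) of $R_P = M_2(\bZ_{(p)})$ even though $R_P a R_P = R_P$. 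Boundedness guarantees that $aR$ contains a nonzero two-sided ideal, but it does not make $RaR$ proper. Your closing remark that $a \mapsto (v_P(a))_P$ is a monoid homomorphism with preimage of the units equal to $R^\times$ fails on both counts for the same reason.

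The repair is to measure $a$ by one-sided data in the localizations rather than by the divisor of the two-sided ideal $RaR$, and this is exactly what the paper's proof does: a bounded Krull order is an intersection of finite type $R = \bigcap_P R_P$ of local prime principal ideal rings; each $R_P$ is a BF-ring by \cref{p:smalldim}, with right length function $a \mapsto \operatorname{length}(R_P/aR_P)$, which vanishes precisely on the units of $R_P$; and then \ref{bf-sub:intersection} of \cref{l:bf-sub}, applied to $R^\bullet$, finishes the proof. In the example above one has $\operatorname{length}(R_P/aR_P) = 1$ while $R_P a R_P = R_P$, which is exactly the distinction your $\lambda$ misses.
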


\begin{proof}
  By \cite[Corollary 5.30]{smertnig13} and the remark following it.

  Alternatively, a bounded Krull order $R$ is an intersection of finite type of local prime principal ideal rings \cite[Theorem 3.3]{akalan-marubayashi16} \footnote{Here, and in \cite{akalan-marubayashi16}, a ring $S$ is \defit{local} if $S$ modulo its Jacobson radical is simple artinian.}.
  Prime principal ideal rings have BF by \cref{p:smalldim}.
  The claim follows from \ref{bf-sub:intersection} of \cref{l:bf-sub}, applied to $R^\bullet$.
\end{proof}

\begin{remark}
  \begin{enumerate}
  \item In the setting of semigroup algebras, typically a prime Goldie ring is called a Krull order if it is a maximal order and satisfies the ascending chain condition on divisorial (two-sided) ideals (see \cite[p.\,56]{jespers-okninski07} or \cite[p.\,256]{okninski16}).
    As already mentioned, when one restricts to the setting of prime PI rings, this coincides with the other notions of Krull orders.
    In general, Krull orders in the sense of Chamarie are Krull orders in the sense of \cite{jespers-okninski07}.
    However, for the latter, it is still open whether every divisorial prime ideal is localizable or not, which poses an obstacle to the development of a smooth structure theory for such orders.
    We refer to the remark after \cite[Proposition 3.10]{akalan-marubayashi16} for more details and further references.

    \item
  In \cite{Geroldinger13} a noncommutative version of Krull monoids, originally introduced by Wauters, is studied.
  If $H$ is such a Krull monoid and every nonunit $a \in H$ is contained in a divisorial prime ideal, then $H$ is a BF-monoid \cite[Theorem 6.5]{Geroldinger13}.
  \end{enumerate}
\end{remark}

\subsection{Centers}

It is well-known that the center of a noetherian ring need not be noetherian.
Even noetherian prime PI rings need not have noetherian center.
The weaker $v$-noetherian property does pass to the center, and allows us to show that the center of a noetherian prime ring has BF, as follows.

Let $H$ be a monoid for which $H^\bullet$ is a right Ore set, and denote by $Q=Q(H)$ the right quotient monoid of $H$ by $H^\bullet$.
For $X \subseteq H$ define $\lc{H}{X} = \{\, q \in Q \mid qX \subseteq H \,\}$ and $\rc{H}{X} = \{\, q \in Q \mid Xq \subseteq H \,\}$.
A right ideal $I$ of $H$ is a \defit{right $H$-ideal} if $I \cap H^\bullet \ne \emptyset$.
We define $I_{H,v} \coloneqq I_v \coloneqq \rc{H}{\lc{H}{I}}$ and call $I$ \defit{divisorial} if $I=I_v$.
The monoid $H$ is \defit{right $v$-noetherian} if it satisfies the ascending chain condition on divisorial right $H$-ideals.
(These notions are developed in detail in the context of noncommutative monoids in \cite[Section 5]{smertnig13}, but go back to work of Asano and Murata \cite{asano-murata53}.)

The following is basically well-known; e.g. it is analogous to \cite[Proposition 5.1.10(a)]{mcconnell-robson01}.

\begin{lemma}\label{vnoeth}
  If $H$ is right $v$-noetherian, then $Z = Z(H) \cap H^\bullet$ is $v$-noetherian.
  In particular, $Z$ is a BF-monoid.
\end{lemma}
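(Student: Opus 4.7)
The plan is to exhibit an injection from divisorial ideals of $Z$ into divisorial right $H$-ideals of $H$ that respects the inclusion order, so that the $v$-noetherian hypothesis on $H$ forces the ACC on divisorial ideals of $Z$; the BF conclusion then follows from the classical commutative result \cite[Theorem 2.2.9]{GHK06}.

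First I would set up the relation between $Q(Z)$ and $Q(H)$. Since $H^\bullet$ is a right Ore set, $Q = Q(H)$ exists, and every $z \in Z \subseteq H^\bullet$ is invertible in $Q$. Because elements of $Z$ commute with all of $H$, they commute with all of $Q$, so they are central in $Q$. Hence the natural map from the group of fractions $Q(Z)$ of the commutative cancellative monoid $Z$ into $Q$ is injective, and I will identify $Q(Z)$ with a central subgroup of $Q$.

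The key lemma is the identity $Q(Z) \cap H = Z$. Indeed, any $q = z_1 z_2^{-1} \in Q(Z)$ with $z_1, z_2 \in Z$ is central in $Q$, hence lies in $Z(H)$ if it lies in $H$. Moreover, from $z_1 = q z_2$ with $z_1, z_2 \in H^\bullet$ and the right saturatedness of $H^\bullet$ in $H$ (\cref{l:cancellative}\ref{cancellative:saturated}), $q \in H^\bullet$. Thus $Q(Z) \cap H \subseteq Z(H) \cap H^\bullet = Z$; the reverse inclusion is trivial.

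Next I would set up the map. For any nonempty integral $Z$-ideal $I$, the product $IH$ is a right ideal of $H$ containing the cancellative element set $I \subseteq Z \subseteq H^\bullet$, so it is a right $H$-ideal. Define $\phi(I) = (IH)_{H,v}$. I claim that $\phi(I) \cap Z = I_{Z,v}$. The inclusion $I_{Z,v} \subseteq \phi(I) \cap Z$ is clear once one notes that $\phi$ is monotone and $I \subseteq IH$; alternatively verify directly. For the converse, observe that if $q \in \lc{Z}{I}$ then $qIH \subseteq ZH = H$, so $\lc{Z}{I} \subseteq \lc{H}{IH}$ inside $Q$. If $z \in \phi(I) \cap Z$, then for every $q \in \lc{Z}{I}$ we have $qz \in H$, while $qz \in Q(Z)$; by the key lemma $qz \in Z$. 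Hence $z \in \rc{Z}{\lc{Z}{I}} = I_{Z,v}$.

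Finally, suppose $I_1 \subseteq I_2 \subseteq \cdots$ is an ascending chain of divisorial $Z$-ideals. Then $\phi(I_1) \subseteq \phi(I_2) \subseteq \cdots$ is an ascending chain of divisorial right $H$-ideals, which stabilizes by hypothesis. Intersecting with $Z$ and using the claim recovers the $I_n$, so the original chain stabilizes, proving $Z$ is $v$-noetherian. Since $Z$ is commutative and cancellative, the ``in particular'' follows from \cite[Theorem 2.2.9]{GHK06}. The main obstacle is the key lemma $Q(Z) \cap H = Z$: it requires both the centrality of $Q(Z)$ in $Q(H)$ and the right saturatedness of $H^\bullet$ in $H$, without which a fraction of central elements could sneak back into $H$ as a noncentral or non-cancellative element and break the correspondence between the two divisorial-closure operations.
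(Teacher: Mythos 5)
Your proof is correct and essentially the paper's own: both hinge on the identity $(IH)_{H,v} \cap Z = I$ for divisorial $Z$-ideals $I$, obtained from $\lc{Z}{I} \subseteq \lc{H}{IH}$ together with $H \cap Q(Z) \subseteq Z$ (which you prove carefully via centrality of $Q(Z)$ in $Q(H)$ and right saturatedness of $H^\bullet$, where the paper uses this inclusion without comment), and then the ACC on divisorial right $H$-ideals is transported to divisorial $Z$-ideals exactly as you describe. The only caveat is that your claim $\phi(I)\cap Z = I_{Z,v}$ for \emph{arbitrary} nonempty $Z$-ideals is not justified by ``monotonicity'' in the direction $I_{Z,v} \subseteq \phi(I)$ (the $v$-closure in $H$ involves more test elements, so this inclusion is dubious in general), but since your chain argument only invokes the claim for divisorial $I_n$, where that direction is the trivial $I_n \subseteq I_nH \subseteq (I_nH)_{H,v}$, the proof stands.
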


\begin{proof}
  The monoid $Z$ is commutative and cancellative.

  Let $I=I_{Z,v} \subseteq Z$ be a divisorial $Z$-ideal.
  We claim $(IH)_{H,v} \cap Z = I$.
  The inclusion ``$\supseteq$'' is trivial.
  To show ``$\subseteq$'' it suffices to show
  \[
    \rc{H}{\lc{H}{I}} \cap Z \subseteq \cc{Z}{\cc{Z}{I}}.
  \]
  where the colon ideals are taken in $Q(H)$ on the left and in $Q(Z)$ on the right.
  Let $x \in Z$ be such that $\lc{H}{I} x \subseteq H$.
  Then $\cc{Z}{I} x \subseteq H \cap Q(Z) \subseteq Z$.
  Thus $x \in \cc{Z}{\cc{Z}{I}}$, as claimed.

  Thus, since $H$ is right $v$-noetherian, so is $Z$.
  Hence $Z$ is a commutative cancellative $v$-noetherian monoid, and therefore has BF \cite[Theorem 2.2.9]{GHK06}.
\end{proof}

\begin{proposition} \label{p:center}
  Let $R$ be a right $v$-noetherian prime ring.
  Then $Z(R)$ is a BF-domain. In particular, if $R$ is a prime right noetherian ring, then $Z(R)$ is a BF-domain.
\end{proposition}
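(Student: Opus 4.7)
The plan is to apply \cref{vnoeth} directly to the multiplicative monoid $H = R$. Under the hypothesis that $R$ is a right $v$-noetherian prime ring, $R^\bullet$ is a right Ore set in $R$ with quotient $Q = Q(R)$ (this is Goldie's theorem in the noetherian case, and more generally the right Ore condition on $R^\bullet$ is built into the very notion of $v$-noetherianity for $R$), so the standing assumption of \cref{vnoeth} is in place.

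The one substantive step is verifying that $H = R$, viewed as a multiplicative monoid, is right $v$-noetherian in the monoid-theoretic sense used in \cref{vnoeth}. I would observe that every monoid-theoretic divisorial right $R$-ideal $I$ is automatically closed under addition: if $a, b \in I = \rc{R}{\lc{R}{I}}$ and $q \in \lc{R}{I}$, then $q(a+b) = qa + qb \in R$, so $a + b \in I$. Moreover $0 = a \cdot 0 \in I$ since $I$ is closed under right multiplication by $R$ and contains a regular element. Consequently the monoid-theoretic divisorial right $R$-ideals coincide with the divisorial essential right ring ideals of $R$, and the ACC on the latter (which holds by hypothesis) transfers to the ACC on the former.

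\cref{vnoeth} then yields that the commutative cancellative monoid $Z(R) \cap R^\bullet$ is a BF-monoid. Primality of $R$ forces every nonzero central element to be a non-zero-divisor, hence $Z(R) \cap R^\bullet = Z(R) \setminus \{0\}$, which is exactly the assertion that $Z(R)$ is a BF-domain. The ``in particular'' clause is immediate, since a right noetherian ring trivially satisfies the ACC on divisorial right ideals.

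The main obstacle is the reduction from the ring-theoretic setting to the monoid-theoretic one of \cref{vnoeth}, which is essentially the analogue of \cite[Proposition 5.1.10(a)]{mcconnell-robson01} already cited in the proof of \cref{vnoeth}; the closure-under-addition observation sketched above makes this routine.
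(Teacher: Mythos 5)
Your proposal is correct and follows essentially the same route as the paper: apply \cref{vnoeth} to the multiplicative monoid of $R$ (with $Z(R)\cap R^\bullet = Z(R)\setminus\{0\}$ by primeness), and dispose of the noetherian case by noting that right noetherian rings are right $v$-noetherian. The extra verification that divisorial monoid right ideals are additively closed, so that the ring-level ACC hypothesis matches the monoid-level one, is a reasonable (and correct) elaboration of a point the paper leaves implicit.
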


\begin{proof} The first claim follows from Lemma \ref{vnoeth}, and the second from the fact that right noetherian rings are right $v$-noetherian.
\end{proof}

\section{FBN rings with additional assumptions}

In this section we consider prime fully bounded noetherian (FBN) rings in which every nonzero ideal contains a nonzero central element.
In particular, this class includes all noetherian prime PI rings \cite[Theorem 13.6.4 and Corollary 13.6.6]{mcconnell-robson01}.
We give a second (different) proof for noetherian prime PI rings in \cref{subsec:pi}.
We recall that factor rings of FBN rings are FBN rings \cite[Exercise 9G]{goodearl-warfield04}.

We define a slight variation of the reduced rank.
Let $R$ be a semiprime right Goldie ring, and let $P_1$, $\ldots\,$, $P_n$ denote the pairwise distinct minimal prime ideals of $R$.
The semisimple right quotient ring $Q=Q(R)$ decomposes as $Q = \bigoplus_{i=1}^n Q_i$ with $Q_i=Qe_i$ and $e_1$, $\ldots\,$,~$e_n$ central idempotents of $Q$.
Each $R/P_i$ is a prime Goldie ring with $Q(R/P_i) \cong Q_i$ a simple artinian ring.

For a right $R$-module $M$, we define $\val_{P_i}(M) \in \bZ_{\ge 0} \cup \{\infty\}$ to be the length of the $Q_i$-module $M/M P_i \otimes_{R/P_i} Q_i$.
This is the same as the reduced rank of the $R/P_i$-module $M/M P_i$.
Alternatively, since $M/M P_i \cong M \otimes_R R/P_i$ and $R/P_i \otimes_{R/P_i} Q_i \cong Q_i \cong Q \otimes_Q Q_i$, we see that $\val_{P_i}(M)$ counts the multiplicity of the unique simple $Q_i$-module in the semisimple $Q$-module $M \otimes_R Q$.

\begin{lemma} \label{l:val-p}
  Let $R$ be a semiprime right Goldie ring and $P$ a minimal prime ideal of $R$.
  \begin{enumerate}
  \item\label{l:val-p:add} The map $\val_P\colon \operatorname{Mod-}R \to \bZ_{\ge 0} \cup \{\infty\}$ is additive on short exact sequences.
  \item\label{l:val-p:zero} $\val_P(M) = 0$ if and only if $M/MP$ is an $R/P$-torsion module.
  \end{enumerate}
\end{lemma}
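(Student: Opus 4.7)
The plan is to exploit the alternative characterization of $\val_P(M)$ given in the paragraph preceding the lemma, namely that $\val_P(M)$ equals the multiplicity of the unique simple $Q_i$-module in the semisimple $Q$-module $M \otimes_R Q$, where $P = P_i$ and $Q = Q(R) = \bigoplus_j Q_j$ with central idempotents $e_j$. This description makes both parts essentially formal.

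For part \ref{l:val-p:add}, I would first invoke the fact that $R$ is semiprime right Goldie, so $Q$ is the right Ore localization of $R$ at the set of regular elements, and hence $Q$ is flat as a left $R$-module. Consequently, $- \otimes_R Q$ is exact on right $R$-modules, and any short exact sequence
\[
  0 \to M' \to M \to M'' \to 0
\]
yields a short exact sequence
\[
  0 \to M' \otimes_R Q \to M \otimes_R Q \to M'' \otimes_R Q \to 0
\]
of right $Q$-modules. Applying the exact functor $- \cdot e_i$ that selects the $Q_i$-component gives a short exact sequence of right $Q_i$-modules. Since $Q_i$ is simple artinian, length is additive on short exact sequences of $Q_i$-modules, with the usual conventions for values in $\bZ_{\ge 0} \cup \{\infty\}$, and additivity of $\val_P$ follows at once.

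For part \ref{l:val-p:zero}, I would observe that the length of a module over the simple artinian ring $Q_i \cong Q(R/P)$ is zero if and only if the module itself is zero. Hence $\val_P(M) = 0$ is equivalent to $(M/MP) \otimes_{R/P} Q(R/P) = 0$. Since $Q(R/P)$ is the classical right Ore localization of the prime Goldie ring $R/P$ at its set of regular elements, this tensor product vanishes precisely when every element of $M/MP$ is annihilated on the right by some regular element of $R/P$, i.e., when $M/MP$ is an $R/P$-torsion module.

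There is no substantial obstacle; both parts reduce to standard facts about Ore localizations, Goldie's theorem, the semisimple structure of $Q$, and its decomposition via the central idempotents $e_i$. The only care needed is to pass back and forth between the two equivalent descriptions of $\val_P$ that the excerpt has already spelled out.
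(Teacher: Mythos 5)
Your proposal is correct and follows essentially the same route as the paper, which proves \ref{l:val-p:add} by noting that ${}_R Q$ is flat so tensoring preserves short exact sequences, and dismisses \ref{l:val-p:zero} as clear; you merely spell out the remaining routine steps (projection onto the $Q_i$-component and the standard torsion criterion for vanishing of the Ore localization).
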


\begin{proof}
  \ref*{l:val-p:add}
  Let
  $
    \begin{tikzcd}[column sep=small]
      0 \ar[r] & A \ar[r] & B \ar[r] & C \ar[r] & 0
    \end{tikzcd}
  $
  be a short exact sequence.
  Since ${}_R Q$ is flat, tensoring with it preserves the short exact sequence.

  \ref*{l:val-p:zero} Clear.
\end{proof}

\begin{lemma} \label{l:val-p-refinement}
  Let $R$ be a right noetherian ring with prime radical $N$, a minimal prime ideal $P$, and let $M$ be a right $R$-module.
  If $M=M_0 \supseteq M_1 \supseteq M_2 \supseteq \cdots \supseteq M_m=0$ and $M = M_0' \supseteq M_1' \supseteq M_2' \supseteq \cdots \supseteq M_n'=0$ are chains of submodules such that $M_iN \subseteq M_{i+1}$ and $M_i'N \subseteq M_{i+1}'$ for all $i$, then
  \[
    \sum_{i=1}^m \val_{P/N}(M_i/M_{i+1}) = \sum_{i=1}^n \val_{P/N}(M_i'/M_{i+1}').
  \]
\end{lemma}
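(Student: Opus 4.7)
The plan is to apply the Schreier refinement theorem to the two chains and conclude via the additivity of $\val_{P/N}$ on short exact sequences established in \cref{l:val-p}\ref{l:val-p:add}. The key preliminary observation is that, because $M_i N \subseteq M_{i+1}$ and $M_j' N \subseteq M_{j+1}'$, each factor $M_i/M_{i+1}$ and $M_j'/M_{j+1}'$ is an $R/N$-module; and since the class of $R/N$-modules is closed under subquotients, $\val_{P/N}$ remains defined on every factor of any refinement of either chain, whether or not that refinement itself stays $N$-stable.

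I would then invoke Schreier's refinement theorem (via the Zassenhaus butterfly lemma) to produce refinements $M = L_0 \supseteq L_1 \supseteq \cdots \supseteq L_s = 0$ of $(M_i)_{i=0}^m$ and $M = L_0' \supseteq L_1' \supseteq \cdots \supseteq L_s' = 0$ of $(M_j')_{j=0}^n$, both of the common length $s = mn$, together with a permutation $\pi$ of $\{0, \ldots, s-1\}$ such that $L_i/L_{i+1} \cong L_{\pi(i)}'/L_{\pi(i)+1}'$ as $R$-modules. Iterating \cref{l:val-p}\ref{l:val-p:add} along each segment $M_j \supseteq M_{j+1}$ of the original chain gives $\val_{P/N}(M_j/M_{j+1}) = \sum \val_{P/N}(L_i/L_{i+1})$, summed over those $i$ that refine this segment; summing over $j$ shows that the left-hand side of the desired identity equals $\sum_{i=0}^{s-1} \val_{P/N}(L_i/L_{i+1})$, and the analogous argument shows the right-hand side equals $\sum_{i=0}^{s-1} \val_{P/N}(L_i'/L_{i+1}')$. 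These refinement sums then coincide because the factors agree up to the permutation $\pi$ and $\val_{P/N}$ is invariant under isomorphism.

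The argument is essentially formal, and I do not foresee any real obstacle; the only mildly delicate point is verifying that all subquotients appearing in the Schreier refinement remain $R/N$-modules so that $\val_{P/N}$ is defined on them, which is automatic from the closure of $R/N$-modules under subquotients.
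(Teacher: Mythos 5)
Your proof is correct and follows essentially the same route as the paper's: refine both chains via the Schreier refinement theorem and use the additivity of $\val_{P/N}$ from \cref{l:val-p} to see that each original sum equals the corresponding refined sum, then match factors up to isomorphism. The extra observation that all subquotients remain $R/N$-modules (indeed, any refinement of an $N$-stable chain is automatically $N$-stable, since $L N \subseteq M_i N \subseteq M_{i+1} \subseteq L$ for $M_i \supseteq L \supseteq M_{i+1}$) is exactly the minor point the paper leaves implicit.
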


\begin{proof}
  The additivity of $\val_{P/N}$ implies that the value of either sum is unchanged under refinement.
  The Schreier Refinement Theorem allows the chains to be refined to equivalent ones, which yields the desired conclusion.
  (This proof is the same as the one in \cite[Lemma 11.1]{goodearl-warfield04} for the reduced rank.)
\end{proof}

\begin{definition}
  Let $R$ be a right noetherian ring, $P$ a minimal prime ideal of $R$, and $N$ the prime radical of $R$.
  For a right module $M$, we define $\val_P(M)$ using the formula in \cref{l:val-p-refinement}.
  (Since $N$ is nilpotent, $M \supseteq MN \supseteq MN^2 \supseteq \cdots \supseteq 0$ shows the existence of a chain as required.)
\end{definition}

\begin{remark}
   The reduced rank of $M$ is $\sum_{P} \val_P(M)$ where the sum runs over all minimal prime ideals of $R$.

   In \cite{jategaonkar75}, Jategaonkar introduces a similar valuation for annihilators of the factors of a critical composition series of $M$.
   He shows that the valuation is additive for \emph{dominant} primes, these being the ones minimal among all the annihilator primes of the critical composition series of $M$.
   The minimal primes over $\ann(M)$ are precisely the dominant primes, so that we can recover Jategaonkar's valuations for dominant primes by considering $M$ as an $R/\ann(M)$ module.
 \end{remark}

 If $M$ is a right $R$-module with $A = \ann(M)$ and $P$ is a prime ideal minimal over $A$, then the expression $\val_{P/A}(M)$, interpreted in $R/A$ makes sense, since $P/A$ is minimal in $R/A$.
 The following lemma shows that we may replace $A$ by a smaller ideal $B \subseteq A$, while preserving $\val_{P/B}(M)=\val_{P/A}(M)$, as long as $P$ remains minimal over $B$.
 We will make use of this refinement later on.
 
 \begin{lemma} \label{l:varying-ideal}
   Let $R$ be a right noetherian ring and $M$ a right $R$-module.
   Let further $A = \ann(M)$ and let $B$ an ideal of $R$ and $P$ a prime ideal of $R$ such that $B \subseteq A \subseteq P$ and $P$ is minimal over $B$. \textup{(}Then $P$ is also minimal over $A$.\textup{)}
   Then
   \[
     \val_{P/A}(M) = \val_{P/B}(M),
   \]
   where the left side is computed in the ring $R/A$ and the right side in $R/B$.
 \end{lemma}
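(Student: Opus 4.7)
The plan is to find a single descending chain of submodules of $M$ that is admissible for computing both $\val_{P/A}(M)$ in $R/A$ and $\val_{P/B}(M)$ in $R/B$, and then to observe that on each subquotient the two resulting semiprime-case quantities coincide.

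Let $N_A$ denote the preimage in $R$ of the prime radical of $R/A$, so that $N_A \supseteq A$ and $N_A/A$ is nilpotent; define $N_B \supseteq B$ analogously. Choose $s$,~$t \ge 1$ with $N_A^s \subseteq A$ and $N_B^t \subseteq B \subseteq A$, and set $I = N_A + N_B$. Expanding $I^{s+t}$ produces a sum of products of $s+t$ factors each equal to $N_A$ or $N_B$; in any such product either $N_A$ appears at least $s$ times or $N_B$ appears at least $t$ times. Using that $N_A$ and $N_B$ are two-sided ideals (so $R N_A = N_A R = N_A$ and analogously for $N_B$), every intervening factor can be absorbed into $R$, and the product lies in $N_A^s$ or $N_B^t$, hence in $A$. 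Therefore $M I^{s+t} = 0$, and the chain $M_i := M I^i$ terminates at $0$. Because $I \supseteq N_A$ and $I \supseteq N_B$, this chain satisfies both $M_i N_A \subseteq M_{i+1}$ and $M_i N_B \subseteq M_{i+1}$, so by \cref{l:val-p-refinement} it can be used in the definitions of $\val_{P/A}(M)$ and $\val_{P/B}(M)$.

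To finish, fix a subquotient $N := M_i / M_{i+1}$. Since $N$ is annihilated by $I$ and therefore by both $N_A$ and $N_B$, we have $N (P/N_A) = NP = N(P/N_B)$. The semiprime-case definition then gives
\[
  \val_{P/N_A}(N) = \text{length}_{Q(R/P)}\bigl((N/NP) \otimes_{R/P} Q(R/P)\bigr) = \val_{P/N_B}(N),
\]
so summing over $i$ yields the equality $\val_{P/A}(M) = \val_{P/B}(M)$.

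The main obstacle is the nilpotence estimate $I^{s+t} \subseteq A$: since $R$ is noncommutative, $N_A$ and $N_B$ need not commute, so the binomial theorem is unavailable. What rescues the argument is that two-sided ideals absorb intervening factors of $R$, allowing one to collect enough copies of $N_A$ or of $N_B$ in each monomial to apply the nilpotency of each.
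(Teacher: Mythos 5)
Your proof is correct, and it follows the same basic strategy as the paper's: produce one finite chain of submodules of $M$ whose layers compute both values via \cref{l:val-p-refinement}, note that the semiprime-case value of a layer $N$ depends only on the $R/P$-module $N/NP$ (so it is the same whether computed in $R/N_A$ or $R/N_B$), and sum. The only genuine difference is the choice of chain: the paper filters $M$ by powers of the prime radical $N_B$ of $B$ alone, so its layers are only annihilated by $N_B$ and the layerwise comparison is made through the base-change isomorphisms $L\otimes_R R/P \cong L\otimes_{R/A}(R/A)/(P/A)\cong L\otimes_{R/B}(R/B)/(P/B)$, whereas you filter by powers of $I=N_A+N_B$, so every layer is killed by both prime radicals and the layerwise equality is immediate from the semiprime-case definition; your variant actually makes that step cleaner. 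Two small remarks: since every prime of $R$ containing $A$ contains $B$, one has $N_B\subseteq N_A$, hence $I=N_A$, so your nilpotence estimate $I^{s+t}\subseteq A$ (correct as written) is unnecessary --- $MI^{s}=MN_A^{s}=0$ already; and your appeal to \cref{l:val-p-refinement} is really two applications, in the rings $R/A$ and $R/B$, which is legitimate because $P/A$ and $P/B$ are minimal primes there by hypothesis and the chain consists of submodules of $M$, which is a module over both factor rings.
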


 \begin{proof}
   Let $N$ be the prime radical of $B$.
   Because $R/B$ is right noetherian, there exists $n \ge 1$ such that $N^n \subseteq B \subseteq A$.
   Consider the chain of $R$-modules
   \[
     M \supseteq M N \supseteq M N^2 \supseteq \cdots \supseteq MN^n = 0.
   \]
   Since $M$ is annihilated by $A$, the same is true for each factor $L_i \coloneqq MN^{i-1}/MN^{i}$ with $1 \le i \le n$.
   Then
   \[
     L_i \otimes_R R/P \cong L_i \otimes_{R/A} (R/A)/(P/A) \cong L_i \otimes_{R/B} (R/B)/(P/B),
   \]
   and of course $R/P \cong (R/A)/(P/A) \cong (R/B)/(P/B)$.
   Thus $\val_{P/B}(L_i)=\val_{P/A}(L_i)$ for all $i$.
   (These expressions make sense, because $P$ is minimal over both $A$ and $B$.)
   Finally  $\val_{P/A}(M) = \sum_{i=1}^n \val_{P/A}(L_i) = \sum_{i=1}^n \val_{P/B}(L_i) = \val_{P/B}(M)$.
 \end{proof}

 We will make use of the following variant of the Principal Ideal Theorem by Chatters, Goldie, Hajarnavis and Lenagan.
 They state the result for prime PI rings, but the same proof works for prime FBN rings as long as every nonzero ideal contains a nonzero central element.
 By $\cC(P)$ we mean the set of all elements $a \in R$ such that $a+P \in (R/P)^\bullet$.

\begin{proposition}[{\cite[Theorem 4.8]{chatters-goldie-hajarnavis-lenagan79}}] \label{p:principal-ideal-thm}
  Let $R$ be a prime FBN ring such that every nonzero ideal contains a nonzero central element and let $a \in R^\bullet$.
  If $P$ is a prime ideal minimal over $\ann(R/aR)$ and $a \not \in \cC(P)$, then $P$ has height at most $1$.
\end{proposition}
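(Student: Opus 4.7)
The plan is to follow the argument of \cite[Theorem 4.8]{chatters-goldie-hajarnavis-lenagan79} essentially verbatim. Their proof uses the PI hypothesis only through its consequence that every nonzero two-sided ideal of a prime factor of $R$ contains a nonzero central element; this is precisely our standing hypothesis on $R$, and the remaining ingredients of their proof---Goldie's theorem for prime rings, reduced-rank techniques, central localizations, and the classical commutative Krull Hauptidealsatz applied to the centre---are all available in the prime FBN setting.

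Concretely, I would argue by contradiction. Assume $\height(P) \ge 2$, so that there is a prime $Q$ with $0 \subsetneq Q \subsetneq P$; by noetherian induction on two-sided ideals, one may take $Q$ maximal among primes strictly contained in $P$. By hypothesis, $Q$ contains a nonzero central element $c$, and hence $Rc = cR \subseteq Q \subsetneq P$ is a nonzero two-sided ideal. One then passes to the central localization of $R$ at $Z(R) \setminus (P \cap Z(R))$ to reduce to a (semi-)local situation, inside of which the centre contains a chain of primes witnessing the height of $P \cap Z(R)$. Combining the commutative Krull Hauptidealsatz applied to the centre with a reduced-rank count of the composition factors of $R/aR$ along a filtration refining $aR + Rc \supseteq aR$, and using that $a \not\in \cC(P)$ forces $\val_{P}(R/aR) \ge 1$ while the minimality of $P$ over $\ann(R/aR)$ bounds this valuation from above, yields the desired contradiction with the existence of the intermediate prime $Q$.

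The main obstacle here is not any single deep step but rather the bookkeeping of verifying, step by step, that each invocation of the PI hypothesis in CGHL's argument is replaceable by our weaker central-element hypothesis combined with the FBN condition. In particular, one must check that CGHL apply the central-element property only within $R$ or within central localizations of $R$, rather than within arbitrary prime factor rings of $R$ (where the property is not known to be inherited). Since CGHL's appeals to centrality are indeed confined to such contexts---centrality being exploited only to set up central localizations over $R$---this verification is routine, and the proof as a whole goes through unchanged.
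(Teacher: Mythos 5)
Your proposal coincides with the paper's own treatment of this statement: the paper gives no independent proof, but cites Chatters--Goldie--Hajarnavis--Lenagan (Theorem 4.8) and merely observes that their argument invokes the PI hypothesis only through the availability of nonzero central elements in nonzero two-sided ideals, so it carries over verbatim to prime FBN rings satisfying that hypothesis. Your speculative reconstruction of the internal steps of their proof is beside the point (and unverified), but your essential claim---that each use of PI is replaceable by the FBN-plus-central-element hypothesis, checked within $R$ and its central localizations---is exactly the justification the paper relies on, so the approach is the same.
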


\begin{corollary} \label{c:pit}
  Let $R$ be a prime FBN ring and assume that every nonzero ideal contains a nonzero central element.
  For each $a \in R^\bullet \setminus R^\times$, there exists a height-one prime ideal $P$ with $\ann(R/aR) \subseteq P$.
\end{corollary}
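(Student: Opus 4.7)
The plan is to apply the noncommutative principal ideal theorem, \cref{p:principal-ideal-thm}, to the ideal $\cN = \ann(R/aR)$. Concretely, I would exhibit a prime $P$ minimal over $\cN$ satisfying $a \notin \cC(P)$; since $R$ is prime and $\cN$ will turn out to be nonzero, any such $P$ is itself nonzero, so $\height P \ge 1$, and \cref{p:principal-ideal-thm} will then force $\height P \le 1$. As $\ann(R/aR) \subseteq P$ by construction, $P$ is the desired height-one prime.

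First, I would check that $\cN$ is a proper nonzero ideal. Properness is immediate from $R/aR \ne 0$, since $a$ is a nonunit in the noetherian ring $R$. For nontriviality, note that $a \in R^\bullet$ in the prime Goldie ring $R$ makes $aR$ an essential right ideal; the FBN hypothesis entails that $R$ is right bounded, so $aR$ contains a nonzero two-sided ideal $I$. Each $r \in I$ satisfies $Rr \subseteq I \subseteq aR$, i.e., $r \in \cN$, so $\cN \supseteq I \ne 0$. Noetherianness then provides a prime $P$ minimal over $\cN$, and primeness of $R$ forces $P \ne 0$, hence $\height P \ge 1$.

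The main obstacle is verifying the hypothesis $a \notin \cC(P)$, i.e., that $\bar a$ is a zero divisor in $R/P$. For this I would use the additional hypothesis that every nonzero ideal contains a nonzero central element: applied to $\cN$, this produces a nonzero central $c \in \cN \subseteq aR$. Writing $c = ab$ for some $b \in R$ and using that $c$ is central while $a$ is cancellative, a short computation yields $ab = ba$, so $\bar a \bar b = \bar b \bar a = \bar 0$ in the prime ring $R/P$. Provided $\bar b \ne 0$ (or $\bar a = 0$, which already suffices), this exhibits $\bar a$ as a zero divisor and gives $a \notin \cC(P)$. The delicate point is the possibility that $a \notin P$ while $b \in P$ for the particular $c$ chosen; here one either exploits the freedom in choosing $c \in Z(R) \cap \cN$, or selects $P$ as an associated prime of $R/aR$ (such a choice is possible in the FBN setting, since every prime minimal over the annihilator of a finitely generated module is associated). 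Once $a \notin \cC(P)$ is secured, \cref{p:principal-ideal-thm} yields $\height P \le 1$, completing the argument.
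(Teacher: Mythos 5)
Your reduction to \cref{p:principal-ideal-thm} and the bookkeeping around it are fine and match the paper's own argument: $\ann(R/aR)$ is nonzero because $aR$ is an essential right ideal ($a \in R^\bullet$ in a prime Goldie ring) and $R$ is right bounded; any prime $P$ minimal over it is nonzero, hence of height at least one since $R$ is prime; and the principal ideal theorem caps the height at one. The genuine gap is the step you yourself flag: producing a minimal prime $P$ over $\ann(R/aR)$ with $a \notin \cC(P)$, i.e.\ with $a+P$ a zero divisor in $R/P$ (which is the intended reading of the hypothesis). Your central-element computation only yields $ab=ba=c \in P$ for one particular factorization of one particular central $c \in \ann(R/aR)$; since primeness of $P$ would require $aRb \subseteq P$, nothing prevents $b \in P$ while $a \notin P$, and you give no argument that a better choice of $c$ exists. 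The second patch is not a proof either: the assertion that over an FBN ring every prime minimal over the annihilator of a finitely generated module is an \emph{associated} prime is left unjustified (the standard fact at this level of generality is that such primes occur among the \emph{affiliated} primes, which is weaker), and even granting it you never show that $P \in \operatorname{Ass}(R/aR)$ forces $a \notin \cC(P)$: from a nonzero submodule $N/aR$ with $(N/aR)P=0$ one does not obviously obtain a zero divisor $a+P$ in $R/P$. That existence statement is precisely the nontrivial content of the step.

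The paper closes this gap by quoting \cite[Lemma 2.1]{chatters-gilchrist86}: for $A=\ann(R/aR)$ with prime radical $N$ one has $a \notin \cC(N)$, and since an element is regular modulo $N$ if and only if it is regular modulo every prime minimal over $A$, some minimal prime $P$ satisfies $a \notin \cC(P)$. Note that this only guarantees the condition for \emph{some} minimal prime, which is a warning sign for your proposed patches: arguments that would give the condition for \emph{all} minimal (or all associated) primes are proving more than is true in general. To complete your proof you would need either to reproduce the Chatters--Gilchrist argument or to supply a genuine substitute; the identity $ab=ba$ for a central $c=ab \in \ann(R/aR)$ is a reasonable starting point, but as written the key hypothesis of \cref{p:principal-ideal-thm} is not verified.
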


\begin{proof}
  We follow the argument in \cite[Theorem 2.2]{chatters-gilchrist86}.

  Let $A = \ann(R/aR)$. Then $A$ is the maximal two-sided ideal contained in $aR$, and thus $0 \ne A \subsetneq R$, where $A \neq 0$ because $R$ is bounded and $aR$ is an essential right ideal by \cite[Proposition 2.3.4]{mcconnell-robson01}.
  Let $N$ be the prime radical of $A$.
  By \cite[Lemma 2.1]{chatters-gilchrist86}, $a \not \in \cC(N)$.
  Thus there exists a prime ideal $P$ minimal over $A$ such that $a \not \in \cC(P)$.
  By \cref{p:principal-ideal-thm}, the height of $P$ is $1$.
\end{proof}

\begin{lemma} \label{l:bounded-ann}
  Let $R$ be a right bounded semiprime right Goldie ring.
  If $M_1$, $\ldots\,$,~$M_n$ are finitely generated torsion modules, then there exists a nonzero ideal $I \subseteq R$ such that $M_i I = 0$ for all $i \in [1,n]$.
\end{lemma}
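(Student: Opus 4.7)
The plan is to identify a single nonzero two-sided ideal of $R$ that lies inside the right annihilator of every generator of every $M_i$, and to produce it by applying right boundedness just once.

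First, I would reduce to the single-module case by setting $M = M_1 \oplus \cdots \oplus M_n$, which is again finitely generated and torsion; write $M = m_1 R + \cdots + m_k R$. For each $j$, torsion-ness of $m_j$ provides a regular element $c_j \in R^\bullet$ with $m_j c_j = 0$, so that the right annihilator $\ann(m_j) = \{\, r \in R : m_j r = 0 \,\}$ contains a regular element. Since $R$ is semiprime right Goldie, any right ideal containing a regular element is essential, so each $\ann(m_j)$ is an essential right ideal.

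The main step is the observation that $L = \bigcap_{j=1}^k \ann(m_j)$ is still an essential right ideal, since a finite intersection of essential right ideals is always essential. Applying right boundedness of $R$ to the single essential right ideal $L$ then yields a nonzero two-sided ideal $I \subseteq L$. Invoking right boundedness on the common annihilator $L$ (rather than on each $\ann(m_j)$ separately) is really the crux of the proof; if one instead produced nonzero two-sided ideals $I_j \subseteq \ann(m_j)$ for each $j$ and tried to intersect or multiply them, one would be stuck, because in a semiprime but non-prime ring (such as $\bZ \times \bZ$) neither a finite intersection nor a product of nonzero two-sided ideals need be nonzero. This is the one point where the hypothesis of semiprimeness rather than primeness makes the argument delicate.

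Finally, since $I$ is two-sided and $I \subseteq L \subseteq \ann(m_j)$ for every $j$, we have $m_j R \cdot I \subseteq m_j I = 0$, so $MI = 0$, and in particular $M_i I = 0$ for every $i \in [1,n]$. Thus $I$ is the desired nonzero two-sided ideal.
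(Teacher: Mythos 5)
Your proof is correct, and its skeleton matches the paper's: both arguments apply right boundedness exactly once, to a single essential right ideal lying inside the annihilator of every generator, and then use two-sidedness of the resulting ideal $I$ to kill all of each $M_i$; your remark that one must not try to intersect or multiply two-sided ideals obtained separately for each generator is exactly the point both proofs are built around. The only difference is how the essential right ideal is produced. You take $L=\bigcap_j \ann(m_j)$, using that each $\ann(m_j)$ contains a regular element and is therefore essential \cite[Proposition 2.3.4]{mcconnell-robson01}, together with the fact that a finite intersection of essential right ideals is essential. The paper instead manufactures a single regular element $x\in R^\bullet$ annihilating all the generators, by iteratively choosing regular annihilators of the partial products $m_{i,j}x_{1,1}\cdots$ (which still lie in torsion modules, so admit further regular annihilators), and then applies boundedness to $xR$. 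Over a semiprime right Goldie ring the two devices are interchangeable, since a right ideal is essential if and only if it contains a regular element; your route avoids the iterative construction, while the paper's avoids any appeal to intersections. One small caveat: your side claim that $M_1\oplus\cdots\oplus M_n$ is itself torsion implicitly uses that the torsion elements form a submodule (the right Ore condition for $R^\bullet$, available here by Goldie's theorem); this is harmless, and in fact unnecessary, since you may simply take as generators the images of generators of the individual $M_i$, or observe that any $\ann(m_j)$ is in any case a finite intersection of essential right ideals and hence essential.
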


\begin{proof}
  For each $M_i$, let $m_{i,1}$, $\ldots\,$,~$m_{i,k}$ be generators.
  Choose $x_{i,j} \in R^\bullet$ with $m_{1,1} x_{1,1} =0$, $(m_{1,2}x_{1,1})x_{1,2} = 0$, $\ldots\,$, $(m_{2,1}x_{1,1}\cdots x_{1,k})x_{2,1} = 0$ and so on.

  Setting $x = x_{1,1}\cdots x_{1,k} \cdots x_{n,1} \cdots x_{n,k} \in R^\bullet$, we have $m_{i,j} x = 0$ for all $i \in [1,n]$ and $j \in [1,k]$.
  Since $R$ is semiprime right Goldie, $xR$ is an essential right ideal of $R$ by \cite[Proposition 2.3.4]{mcconnell-robson01}.
  Since $R$ is right bounded, there exists a right essential two-sided ideal $I \subseteq xR$ of $R$.
  If $m = m_{i,1} r_1 + \cdots + m_{i,k} r_k \in M_i$, then $m I \subseteq \sum_{j=1}^k m_{i,j} I \subseteq \sum_{j=1}^k m_{i,j} xR  = 0$.
  Thus $0 \ne I \subseteq \ann(M_i)$.
\end{proof}

\begin{lemma} \label{l:vp-pos}
  Let $R$ be an FBN ring.
  If $a \in R^\bullet \setminus R^\times$ and $P$ is a prime ideal minimal over $A=\ann(R/aR)$, then $\val_{P/A}(R/aR) > 0$.
\end{lemma}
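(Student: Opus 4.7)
The plan is to produce a nonzero submodule $N \subseteq M := R/aR$ that is annihilated by $P$ and torsion-free as a right $R/P$-module; additivity of $\val_{P/A}$ applied to the short exact sequence $0 \to N \to M \to M/N \to 0$ (over $R/A$) will then yield $\val_{P/A}(M) \geq \val_{P/A}(N) \geq 1$.

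For the existence of $N$, I will appeal to the standard theory of associated primes in noncommutative noetherian rings (see, e.g., \cite[Chapter 3]{goodearl-warfield04}): since $R$ is noetherian, $M$ is finitely generated, and $P$ is minimal over $\ann(M) = A$, there exists a nonzero ``$P$-tame'' submodule $N \subseteq M$, meaning $NP = 0$ and $\ann(N') = P$ for every nonzero submodule $N' \subseteq N$. Such an $N$ is automatically torsion-free as a right $R/P$-module: if its $R/P$-torsion submodule $t(N)$ were nonzero, then, being a finitely generated torsion module over the prime right Goldie ring $R/P$, it would be annihilated by some regular element $\bar c \in R/P$, and tameness would force $c \in \ann(t(N)) = P$, contradicting the regularity of $\bar c$.

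For the computation of $\val_{P/A}(N)$, note that $N$ is annihilated by $P$, hence by the nilradical $\mathfrak{n}$ of $A$. In the chain defining $\val_{P/A}(N)$ via \cref{l:val-p-refinement} (applied in the quotient ring $R/A$), we may therefore take the trivial chain $N \supseteq 0$, which collapses to $\val_{P/A}(N) = \val_{P/\mathfrak{n}}(N)$ computed in the semiprime Goldie ring $R/\mathfrak{n}$. Unravelling the definition (and using $NP = 0$), this equals the $Q(R/P)$-length of $N \otimes_{R/P} Q(R/P)$; since $N$ is a nonzero finitely generated torsion-free right $R/P$-module, it embeds in its Ore localization, so the length is at least $1$. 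Combining with additivity of $\val_{P/A}$ on the sequence $0 \to N \to M \to M/N \to 0$ (\cref{l:val-p,l:val-p-refinement}) gives $\val_{P/A}(M) \geq 1$.

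The main obstacle is the existence of the $P$-tame submodule $N$; this is the only nonformal input, and rests on the standard result that every prime minimal over $\ann(M)$ is an associated prime of $M$ and is realized as the annihilator of a tame submodule. Once this is in hand, the remainder of the argument is a routine assembly of additivity and the definition of $\val_{P/A}$.
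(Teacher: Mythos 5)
Your concluding computations (additivity of $\val_{P/A}$, and the fact that a nonzero finitely generated torsion-free $R/P$-module has positive reduced rank) are fine, but the step you call the only nonformal input is exactly where the proof fails. It is \emph{not} a standard result in the noncommutative setting that a prime minimal over $\ann(M)$ is an annihilator (associated) prime of $M$, realized by a tame submodule: that is a commutative fact, proved via localization, and it fails even for FBN rings and even for modules of the exact shape $M=R/aR$ in this lemma. Concretely, let $R=\begin{pmatrix}\bZ&\bZ\\0&\bZ\end{pmatrix}$ (an FBN ring, all of whose prime factors are commutative), let $p$ be a prime number, and let $a=\begin{pmatrix}p&1\\0&1\end{pmatrix}\in R^\bullet\setminus R^\times$. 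Then $aR=\bigl\{\begin{pmatrix}px&py+z\\0&z\end{pmatrix}:x,y,z\in\bZ\bigr\}$, so $A=\ann(R/aR)=pR$, and $M=R/aR\cong e_{11}(R/pR)$, i.e.\ the pairs $(u,v)\in\bZ/p\times\bZ/p$ with $(u,v)\cdot\begin{pmatrix}\alpha&\beta\\0&\gamma\end{pmatrix}=(u\alpha,\,u\beta+v\gamma)$. The primes minimal over $A$ are $P_1=\begin{pmatrix}p\bZ&\bZ\\0&\bZ\end{pmatrix}$ and $P_2=\begin{pmatrix}\bZ&\bZ\\0&p\bZ\end{pmatrix}$. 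The only nonzero proper submodule of $M$ is $\{0\}\times\bZ/p$, which is annihilated by $P_2$ but not by $P_1$; in fact no nonzero submodule of $M$ is annihilated by $P_1$ at all. So for $P=P_1$ your module $N$ does not exist, and the strategy of locating the positivity inside a submodule killed by $P$ cannot work: here $\val_{P_1/A}(M)=1$, but the contribution comes from the \emph{quotient} of $M$ by $M\overline{N}$ (with $\overline{N}$ the prime radical of $R/A$), not from any submodule.

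Two further symptoms of the same issue. First, your argument never uses the FBN hypothesis; the paper's proof uses boundedness essentially, via \cref{l:bounded-ann}, to produce a two-sided ideal $X\supsetneq P$ annihilating the $R/P$-torsion subfactors arising from a chain as in \cref{l:val-p-refinement}, and then contradicts the primality of $P$. Second, even your torsion-freeness step tacitly assumes more than is true in general: a finitely generated torsion module over a prime Goldie ring need not be annihilated by a regular element (over a simple noetherian domain such as the Weyl algebra, every nonzero module is faithful, and $A_1/cA_1$ is finitely generated torsion); one needs right boundedness of $R/P$, i.e.\ \cref{l:bounded-ann}, to get an annihilating ideal there. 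That secondary point is repairable because $R$ is FBN, but the missing existence of a $P$-tame submodule is not, so the argument needs a different mechanism, such as the paper's: assume $\val_{P/A}(R/aR)=0$ and work with all subfactors of a chain refining $M\supseteq M\overline{N}\supseteq M\overline{N}^2\supseteq\cdots$, rather than with a single submodule.
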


\begin{proof}
  Let $N$ be the prime radical of $A$, and let $P_1=P$, $P_2$, $\ldots\,$,~$P_k$ denote the pairwise distinct minimal prime ideals over $A$.
  Set $Y = P_2 \cap \cdots \cap P_k \cap R$.

  Suppose $\val_{P/A}(R/aR) = 0$, and let $R/aR=M_0 \supseteq M_1 \supseteq \cdots \supseteq M_n=0$ be a sequence of submodules with $M_{i-1}N \subseteq M_i$.
  Then $\val_{P/A}(M_{i-1}/M_i)=0$ for all $i$ by Lemma \ref{l:val-p}\ref{l:val-p:add}, and hence $M_{i-1}/(M_{i-1}P+M_i)$ is an $R/P$-torsion module.
  By \cref{l:bounded-ann} there exists an ideal $X \supsetneq P$ annihilating all $M_{i-1}/(M_{i-1}P+M_i)$.
  Noting that $Y$ annihilates $(M_{i-1}P+M_i)/M_i$, we see that $XY$ annihilates $M_{i-1}/M_i$.
  Thus $(XY)^n \subseteq A \subseteq N$, and hence $X P_2 \cdots P_n \subseteq N \subseteq P_1$.
  Thus $X \subseteq P_1$, a contradiction.
\end{proof}

\begin{theorem} \label{thm: FBN}
  Let $R$ be a prime FBN ring and suppose that every nonzero ideal contains a nonzero central element.
  Then $R$ is a BF-ring.
\end{theorem}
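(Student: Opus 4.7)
The plan is to construct an explicit superadditive length function $\lambda$ on $R^\bullet$ by summing local contributions at each height-one prime of $R$, and then to invoke \cref{t:bf}. For $a \in R^\bullet$ with $A \coloneqq \ann(R/aR)$ and each height-one prime $P$ of $R$, I propose
\[
  v_P(a) \coloneqq \begin{cases} \val_{P/A}(R/aR), & A \subseteq P,\\ 0, & A \not\subseteq P, \end{cases}
\]
computed inside $R/A$ in the nontrivial case, and I set $\lambda(a) \coloneqq \sum_P v_P(a)$. For $a \in R^\bullet \setminus R^\times$ the ideal $A$ is nonzero (as $R$ is right bounded and $aR$ essential, as in the proof of \cref{c:pit}), so any height-one $P \supseteq A$ is automatically minimal over $A$; only finitely many such $P$ exist by noetherianity, and each $v_P(a)$ is finite since $R/aR$ is finitely generated. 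For $a \in R^\times$ the sum is empty.

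The main step is additivity: $\lambda(bc) = \lambda(b) + \lambda(c)$ for all $b,c \in R^\bullet$. I would exploit the short exact sequence
\[
  0 \longrightarrow R/cR \xrightarrow{\,b\,\cdot\,} R/bcR \longrightarrow R/bR \longrightarrow 0,
\]
whose three terms are all annihilated by $A = \ann(R/bcR)$. Fix a height-one prime $P$. If $A \not\subseteq P$, then $\ann(R/bR)$ and $\ann(R/cR)$ both contain $A$ and hence are also not contained in $P$, so all three values vanish. If $A \subseteq P$, then $P$ is minimal over $A$, and \cref{l:val-p} yields additivity of $\val_{P/A}$ on the sequence. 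I then need to identify $\val_{P/A}(R/bR)$ with $v_P(b)$, and analogously for $c$: when $\ann(R/bR) \subseteq P$ this is \cref{l:varying-ideal}, while when $\ann(R/bR) \not\subseteq P$ both sides should equal zero. The latter is the place I anticipate needing most care: it should reduce, along the defining chain of $\val_{P/A}$, to the statement that over the semiprime ring $R/N$ (with $N$ the prime radical of $A$) any module annihilated by an ideal not contained in a minimal prime $P/N$ becomes trivial after tensoring with $Q(R/P)$, because a nonzero ideal of the prime Goldie ring $R/P$ contains a regular element by \cite[Proposition 2.3.4]{mcconnell-robson01}.

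Finally, for $a \in R^\bullet \setminus R^\times$, \cref{c:pit} produces a height-one prime $P \supseteq A$ and \cref{l:vp-pos} gives $v_P(a) \ge 1$, so $\lambda(a) \ge 1$. Combined with the additivity just established, $\lambda$ is a superadditive length function on $R^\bullet$, and \cref{t:bf} concludes that $R$ is a BF-ring.
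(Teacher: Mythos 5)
Your proposal is correct and takes essentially the same route as the paper: the identical length function $\lambda(a)=\sum_{P}\val_{P/A}(R/aR)$ summed over height-one primes $P\supseteq A=\ann(R/aR)$, with positivity supplied by \cref{c:pit} and \cref{l:vp-pos}, additivity of $\val_{P/A}$ applied to the factorization $a=bc$, and the change of annihilator handled by \cref{l:varying-ideal}. The only deviation is that you aim for exact additivity $\lambda(bc)=\lambda(b)+\lambda(c)$, which forces the extra vanishing claim $\val_{P/A}(R/bR)=0$ when $\ann(R/bR)\not\subseteq P$ (your torsion argument for it is sound), whereas the paper proves only the superadditive inequality by discarding those nonnegative terms, which is all that \cref{t:bf} requires.
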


\begin{proof}
  Let $\fX(R)$ denote the set of height-one prime ideals of $R$.
  For $a \in R^\bullet$ and $A = \ann(R/aR)$ define
  \[
    \lambda(a) = \sum_{\substack{P \in \fX(R) \\ A \subseteq P}} \val_{P/A}(R/aR).
  \]
  We claim that $\lambda$ is a superadditive length function.

  Suppose $a \in R^\bullet \setminus R^\times$.
  Then $A=\ann(R/aR) \ne 0$, because $aR$ is an essential right ideal of $R$, and $R$ is bounded.
  By \cref{c:pit} there exists a height-one prime ideal $P$ with $\ann(R/aR) \subseteq P$.
  Since $A \ne 0$, the ideal $P$ is minimal over $\ann(R/aR)$.
  \Cref{l:vp-pos} implies $\val_{P/A}(R/aR) > 0$, and hence $\lambda(a) > 0$.
  Thus $\lambda(a)=0$ implies $a \in R^\times$.

  Suppose that $a=bc$ with $b$,~$c \in R^\bullet$ and let $B=\ann(R/bR)$ and $C = \ann(R/cR)$.
  Since $aR = bcR$ and $bcR \subseteq bR \subseteq R$, the modules $R/bR$ and $R/cR\cong bR/bcR$ appear as factor, respectively, submodule of $R/aR$.
  Thus $A \subseteq B$ and $A \subseteq C$.
  Using the additivity of $\val_{P/A}$, we find
  \[
    \begin{split}
      \lambda(a) &= \sum_{\substack{P \in \fX(R) \\ A \subseteq P}} \val_{P/A}(R/aR) = \sum_{\substack{P \in \fX(R) \\ A \subseteq P}} \big(\val_{P/A}(R/bR) + \val_{P/A}(R/cR)\big) \\
      & \ge \sum_{\substack{P \in \fX(R) \\ B \subseteq P}} \val_{P/A}(R/bR) + \sum_{\substack{P \in \fX(R) \\ C \subseteq P}} \val_{P/A}(R/cR) \\
      &= \sum_{\substack{P \in \fX(R) \\ B \subseteq P}} \val_{P/B}(R/bR) + \sum_{\substack{P \in \fX(R) \\ C \subseteq P}} \val_{P/C}(R/cR) = \lambda(b) + \lambda(c).
    \end{split}
  \]
  The equalities $\val_{P/A}(R/bR)=\val_{P/B}(R/bR)$ for $B \subseteq P$, and the analogous equalities for $R/cR$, are justified by \cref{l:varying-ideal}.
\end{proof}

\begin{corollary} \label{c:prime-pi}
  Let $R$ be a noetherian prime PI ring.
  Then $R$ is a BF-ring.
\end{corollary}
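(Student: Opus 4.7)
The plan is to deduce this corollary directly from \cref{thm: FBN}. To do so, I need only verify that a noetherian prime PI ring $R$ satisfies the two hypotheses of that theorem: namely, that $R$ is FBN, and that every nonzero two-sided ideal of $R$ contains a nonzero central element. Both of these are standard facts from the theory of PI rings.

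First, I would invoke \cite[Theorem 13.6.4]{mcconnell-robson01} (a theorem of Cauchon and Rowen) to conclude that any noetherian PI ring is fully bounded, hence FBN. Second, I would invoke \cite[Corollary 13.6.6]{mcconnell-robson01} (which rests on Posner's theorem and the fact that the trace ring of a prime PI ring lies inside the center of its quotient ring) to conclude that every nonzero two-sided ideal of the prime PI ring $R$ meets the center nontrivially. These are precisely the two hypotheses already highlighted at the beginning of the section containing \cref{thm: FBN}.

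With both hypotheses verified, \cref{thm: FBN} applies and shows that $R$ is a BF-ring. There is essentially no obstacle here; the work has all been done upstream, and the corollary is simply the combination of the general FBN theorem with the two classical structural facts recalled above. The only stylistic choice is whether to cite \cite{mcconnell-robson01} explicitly or merely refer back to the discussion at the start of the section; I would do the former, since it keeps the proof self-contained in one line.
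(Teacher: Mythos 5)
Your proposal is correct and follows exactly the paper's own argument: the paper likewise cites \cite[Theorem 13.6.4]{mcconnell-robson01} for the FBN property and \cite[Corollary 13.6.6]{mcconnell-robson01} (via the existence of central polynomials) for nonzero central elements in nonzero ideals, and then applies \cref{thm: FBN}. No differences worth noting.
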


\begin{proof}
  Every noetherian prime PI ring is an FBN ring \cite[Theorem 13.6.4]{mcconnell-robson01}.
  Moreover, the existence of a central polynomial can be used to show that every nonzero ideal contains a nonzero central element \cite[Corollary 13.6.6]{mcconnell-robson01}.
  Therefore  \cref{thm: FBN} implies the claim.
\end{proof}

\subsection{PI rings} \label{subsec:pi}

We now give a different proof of \cref{c:prime-pi}, that is more in the spirit of PI theory.
A ring extension $R \subseteq S$ is \defit{finite centralizing}, if the module $S_R$ has a finite set of generators, each of which centralizes $R$.
Finite centralizing (more generally, finite normalizing) extensions have the \defit{lying over} property: for every prime ideal $P$ of $R$, there exists a prime ideal $Q$ of $S$ with $P = Q \cap R$ \cite[Theorems 10.2.9 and 10.2.4]{mcconnell-robson01}.

\begin{proposition} \label{p:pi-ext-units}
  If $R \subseteq S$ is a finite centralizing extension of PI rings, then $S^\times \cap R = R^\times$.
\end{proposition}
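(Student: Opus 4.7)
The inclusion $R^\times \subseteq S^\times \cap R$ is immediate, so the plan is to prove the reverse inclusion by contradiction. Assume $r \in R \cap S^\times$ with $r \notin R^\times$. The strategy is to locate a maximal two-sided ideal $M$ of $R$ in which the image of $r$ is already a zero-divisor, and then use lying over to lift $M$ to a prime ideal $Q$ of $S$ with $Q \cap R = M$. This produces a zero-divisor in $S/Q$, contradicting the fact that $r$ is a unit in $S$ and $Q$ is proper.

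First, because PI rings are stably finite, the assumption $r \notin R^\times$ forces $r$ to fail to be right-invertible, so $rR$ is a proper right ideal; enlarge it to a maximal right ideal $N$ of $R$, and note $r \in N$. Set $M = \ann_R(R/N)$, the primitive ideal associated with the simple right $R$-module $R/N$. By Kaplansky's theorem, the primitive PI ring $R/M$ is simple Artinian, say $R/M \cong M_n(D)$, and its unique simple right module $R/N$ is isomorphic to $D^n$. Under this identification, $r \in N$ says that the nonzero generator $1+N$ of $R/N$ is killed by $\bar r := r + M$, so $\bar r \in M_n(D)$ is a singular matrix and therefore a zero-divisor; pick $0 \ne u \in R/M$ with $u \bar r = 0$.

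Finite centralizing extensions enjoy the lying over property \cite[Theorem 10.2.9]{mcconnell-robson01}, so there exists a prime ideal $Q$ of $S$ with $Q \cap R = M$. The induced ring embedding $R/M \hookrightarrow S/Q$ transports the relation $u \bar r = 0$ with $u \ne 0$ into $S/Q$, so $\bar r$ is a zero-divisor, and in particular a non-unit, in $S/Q$. On the other hand, since $r \in S^\times$ and $Q$ is a proper ideal of $S$, the image of $r$ in $S/Q$ must be a unit. This is the desired contradiction.

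The main subtlety lies in the first step: one cannot in general place a non-unit of $R$ into a maximal two-sided ideal (the simple ring $M_n(k)$ already shows this), so the passage from the maximal right ideal $N$ to a two-sided ideal must go through the annihilator of the simple module $R/N$, with Kaplansky's theorem then bringing us back into the concrete setting of a simple Artinian ring, where non-units are automatically zero-divisors.
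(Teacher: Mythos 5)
Your proof is correct and follows essentially the same route as the paper's: place the element in a maximal right ideal, pass to the primitive annihilator ideal, invoke Kaplansky's theorem to get a simple artinian quotient where the element becomes a zero-divisor, and lift via lying over to contradict invertibility in $S$. The only (welcome) extra detail is your explicit appeal to Dedekind-finiteness of PI rings to ensure $rR \ne R$, a point the paper's proof leaves implicit when it puts $a$ inside a maximal right ideal.
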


\begin{proof}
  Clearly $R^\times \subseteq R \cap S^\times$, and it suffices to show the other inclusion.
  Assume that there exists an $a \in R \setminus R^\times$ such that $a \in S^\times$.
  Then $a$ is contained in a maximal right ideal $M \subseteq R$.
  Let $P = \ann(R/M)$.
  Since $P$ is primitive, it is prime.
  By the lying over property, there exists a prime ideal $Q$ of $S$ such that $Q \cap R = P$.
  Thus $R/P \hookrightarrow S/Q$.
  
  Since $a + P$ is contained in the maximal right ideal $M+P$ of $R/P$, it is a nonunit in $R/P$.
  Since $R/P$ is a primitive PI ring, Kaplansky's Theorem \cite[Theorem 13.3.8]{mcconnell-robson01} implies that $R/P$ is simple artinian.
  Thus $a+P$ is a zero-divisor, and since $R/P$ embeds into $S/Q$, also $a+Q$ is a zero-divisor in $S/Q$.
  It follows that $a+Q$ is a nonunit, and hence $a \not \in S^\times$.
\end{proof}

For the remainder of this section, let $R$ be a prime PI ring and $K$ the quotient field of its center $Z(R)$.
Then $R$ has a quotient ring $A$ that is a central simple algebra over $K$.
Each $a \in A$ induces a $K$-endomorphism $\mu_a$ of $A$, given by left multiplication, $x \mapsto ax$.
The \defit{characteristic polynomial} of $a$ is the characteristic polynomial of $\mu_a$.
The \defit{norm} of $a$ is $N_{A/K}(a) = \det(\mu_a)$.

Let $t(R)$ denote the subring of $K$ generated over $Z(R)$ by all coefficients of characteristic polynomials of elements of $R$.
The subring $T(R) = t(R) R$ of $A$ is the \defit{trace ring} of $R$ \cite[\S 13.9]{mcconnell-robson01}.

\begin{lemma}
  We have $t(R)=Z(R)$ if and only if $R$ is integral over $Z(R)$.
\end{lemma}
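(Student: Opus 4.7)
Plan. I would prove the two implications separately.

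For $t(R)=Z(R)\Rightarrow R$ is integral over $Z(R)$, fix $a\in R$ and let $f(x)\in K[x]$ be the characteristic polynomial of $\mu_a$, which is monic of degree $n^2=\dim_K A$. By hypothesis $f\in Z(R)[x]$. The Cayley--Hamilton identity yields $f(\mu_a)=0$ in $\End_K(A)$; evaluating at $1\in A$ and using that the coefficients of $f$ are central, so that $\mu_a^k(1)=a^k$ and $f(\mu_a)(1)=f(a)\cdot 1$, gives $f(a)=0$. Hence $a$ is integral over $Z(R)$.

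For the converse, assume $R$ is integral over $Z(R)$. By definition $Z(R)\subseteq t(R)\subseteq K$, and since $Z(A)=K$ one has $R\cap K=Z(R)$. Therefore the equality $t(R)=Z(R)$ is equivalent to $t(R)\subseteq R$, i.e., to showing that for every $a\in R$ the coefficients of the characteristic polynomial $f$ of $\mu_a$ lie in $R$. Fix $a\in R$ with an annihilating monic $q(x)\in Z(R)[x]$. In the central simple $K$-algebra $A$ of dimension $n^2$, the minimal polynomial $m_a\in K[x]$ of $a$ divides $q$, and $f(x)=p_a(x)^n$, where $p_a$ denotes the reduced characteristic polynomial of $a$, itself a power of $m_a$. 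Consequently the roots of $f$ (in an algebraic closure of $K$) are roots of $q$, hence integral over $Z(R)$, and by Vieta's formulas the coefficients of $f$ lie in $K$ and are integral over $Z(R)$.

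The main obstacle is the final descent: strengthening ``integral over $Z(R)$'' to ``lies in $Z(R)$.'' I would handle this by passing to a splitting field $L/K$ of $A$, where $A\otimes_K L\cong M_n(L)$ and $\mu_a\otimes 1$ acts as $n$ copies of the left-regular representation of $a\in M_n(L)$, so that $f(x)=\operatorname{char}_{M_n(L)}(a)^n$. The classical matrix characteristic polynomial of $a$ is given by a cofactor expansion in the entries of $a$ and, together with the integrality of $a$ over $Z(R)$ and the trace-ring machinery recorded in \cite[\S13.9]{mcconnell-robson01} (cited just above the lemma), lets one conclude that the coefficients of $f$ in fact lie in $R$, hence in $Z(R)$. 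This descent step is where the content of the lemma resides; the remaining manipulations are routine.
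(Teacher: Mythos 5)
Your first implication ($t(R)=Z(R)\Rightarrow R$ integral over $Z(R)$) is correct and is exactly the easy half of the equivalence the paper uses: the characteristic polynomial of $\mu_a$ is monic with coefficients in $t(R)=Z(R)$, and Cayley--Hamilton evaluated at $1\in A$ shows $a$ satisfies it. The paper's whole proof consists of the assertion that $r\in R$ is integral over $Z(R)$ if and only if its characteristic polynomial has coefficients in $Z(R)$, from which both directions follow at once; so on this half you and the paper agree.

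The converse is where your proposal has a genuine gap, and you essentially say so yourself. Your reduction to showing $t(R)\subseteq R$ (using $R\cap K=Z(R)$) is fine, and your Vieta/minimal-polynomial argument does show that the coefficients of the characteristic polynomial of an integral element lie in $K$ and are \emph{integral over} $Z(R)$. But that is strictly weaker than what is needed: $Z(R)$ is not assumed to be integrally closed in $K$, so ``in $K$ and integral over $Z(R)$'' does not yield ``in $Z(R)$''. The final paragraph does not close this gap: after extending scalars to a splitting field $L$, the matrix entries of the image of $a$ lie in $L$ with no integrality or rationality control relative to $Z(R)$, so a cofactor expansion of the characteristic polynomial says nothing about membership in $Z(R)$ (or in $R$); and the appeal to ``the trace-ring machinery of \cite[\S 13.9]{mcconnell-robson01}'' is not an argument --- that machinery constructs $t(R)$ and $T(R)$ but does not by itself assert $t(R)\subseteq R$ when $R$ is integral over its center. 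In short, the nontrivial direction of the lemma is exactly the statement ``$a$ integral over $Z(R)$ $\Rightarrow$ characteristic polynomial coefficients lie in $Z(R)$'' which the paper invokes as known; your write-up identifies this as the crux but neither proves it nor cites a precise result covering it, so the proof is incomplete as it stands.
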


\begin{proof}
  An element $r \in R$ is integral over $Z(R)$ if and only if its characteristic polynomial has coefficients in $Z(R)$.
  From this the claim follows immediately.
\end{proof}

\begin{lemma} \label{l:norm}
  Suppose that $R$ is integral over $Z(R)$.
  \begin{enumerate}
  \item \label{norm:mul} $N_{A/K} \colon A \to K$ is a homomorphism of multiplicative monoids and restricts to a multiplicative homomorphism $N_{R/Z(R)} \colon R \to Z(R)$.
  \item \label{norm:zd} $a \in R$ is a zero-divisor if and only if $N_{A/K}(a) = 0$.
  \item \label{norm:unit} $a \in R^\times$ if and only if $N_{A/K}(a) \in Z(R)^\times$.
  \end{enumerate}
\end{lemma}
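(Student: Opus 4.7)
The plan is to exploit the interplay between the characteristic polynomial, Cayley--Hamilton, and the simple artinian structure of $A$, using the preceding (unlabelled) lemma to guarantee that characteristic polynomial coefficients of elements of $R$ lie in $Z(R)$.

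For \ref{norm:mul}, multiplicativity is immediate from $\mu_{ab} = \mu_a \circ \mu_b$, since the determinant is multiplicative. For the second statement, I invoke the preceding lemma: since $R$ is integral over $Z(R)$, we have $t(R) = Z(R)$, so the characteristic polynomial of any $a \in R$ has all coefficients in $Z(R)$; in particular, $N_{A/K}(a)$ (up to sign, the constant term) lies in $Z(R)$.

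For \ref{norm:zd}, observe that $A$ is a central simple $K$-algebra, hence simple artinian, so every element of $A$ is either a unit or a zero-divisor. Since $A$ is finite-dimensional over $K$, the $K$-endomorphism $\mu_a$ is invertible if and only if $\det(\mu_a) = N_{A/K}(a) \neq 0$, and $a \in A^\times$ if and only if $\mu_a$ is invertible. It then suffices to verify that $a \in R$ is a zero-divisor in $R$ precisely when it is one in $A$: one direction is clear, and conversely if $ab = 0$ with $0 \neq b \in A$, writing $b = cd^{-1}$ with $c \in R \setminus \{0\}$ and $d \in R^\bullet$ gives $ac = 0$ in $R$.

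For \ref{norm:unit}, the forward direction is immediate from multiplicativity: $ab = 1$ in $R$ forces $N_{A/K}(a) N_{A/K}(b) = 1$ in $Z(R)$. The converse is the main step. Let $p(t) = t^n + c_{n-1}t^{n-1} + \cdots + c_1 t + c_0 \in Z(R)[t]$ be the characteristic polynomial of $a$, which has coefficients in $Z(R)$ by the preceding lemma. Applying Cayley--Hamilton inside $A$ and rearranging yields
\[
  a \cdot \bigl(a^{n-1} + c_{n-1} a^{n-2} + \cdots + c_1\bigr) = -c_0 = (-1)^{n+1} N_{A/K}(a).
\]
If $N_{A/K}(a) \in Z(R)^\times$, then $-c_0$ is a central unit, and the displayed identity exhibits an explicit two-sided inverse of $a$ lying in $R$. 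The only conceptual obstacle is ensuring the characteristic polynomial coefficients land in $Z(R)$, which is exactly what the integrality hypothesis (via the preceding lemma) provides.
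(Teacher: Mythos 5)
Your proposal is correct and follows essentially the same route as the paper: multiplicativity of the determinant plus the preceding lemma for \ref{norm:mul}, the identification of zero-divisors of $R$ with nonunits of the artinian quotient ring $A$ (equivalently $\det(\mu_a)=0$) for \ref{norm:zd}, and Cayley--Hamilton with central coefficients for the converse in \ref{norm:unit}. The only cosmetic difference is that you evaluate the Cayley--Hamilton identity at $1$ to invert $a$ directly in $R$, whereas the paper inverts $\mu_a$ in $\End_{Z(R)}R$ and uses the embedding $R\hookrightarrow\End_{Z(R)}R$; these are the same argument in substance.
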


\begin{proof}
  \ref*{norm:mul}
  The norm is multiplicative because the determinant is, and $N_{A/K}(1) = \det(\id_A) = 1$.
  By assumption $N_{A/K}(a) \in Z(R)$ for all $a \in R$.

  \ref*{norm:zd}
  $a \in R$ is a zero-divisor if and only if it is a nonunit in $A$.
  However, this is clearly equivalent to $\mu_a$ not being an isomorphism, that is, to $N_{A/K}(a) = 0$.

  \ref*{norm:unit}
  Let $a \in R$.
  If $a \in R^\times$, then \ref*{norm:mul} implies $N_{R/Z(R)}(a) \in Z(R)^\times$.

  Suppose $N_{R/Z(R)}(a) \in Z(R)^\times$, and let $X^n + a_{n-1} X^{n-1} + \cdots + a_0$ with $a_i \in Z(R)$ denote the characteristic polynomial of $a$.
  By the Cayley-Hamilton Theorem, $\mu_a^n + a_{n-1} \mu_a^{n-1} + \cdots + a_0 = 0$ in $\End_K(A)$.
  Thus
  \[
    (\mu_a^{n-1} + a_{n-1} \mu_a^{n-2} \cdots + a_1) \mu_a = -a_0 \in Z(R)^\times.
  \]
  We conclude that $\mu_a$ is also invertible as an element of $\End_{Z(R)} R$. Since $R$ embeds in $\End_{Z(R)} R$, it follows that $a \in R^\times$.
\end{proof}

\begin{proof}[Second proof of \cref{c:prime-pi}]
  Suppose first that $R$ is integral over its center.
  Then $Z(R)$ is a BF-domain by \cref{p:center}, and \cref{l:norm} implies that $N_{A/K}\colon R^\bullet \to Z(R)^\bullet$ is a monoid homomorphism satisfying that $N_{A/K}(a) \in Z(R)^\times$ implies $a \in R^\times$.
  Thus $\max \sL(a) \le \max \sL(N_{A/K}(a))$.

  In the general case we now reduce the question to the trace ring $T(R)$.
  Since $R$ is noetherian, so is $T(R)$ \cite[Proposition 13.9.11]{mcconnell-robson01}.
  Since $R$ and $T(R)$ are equivalent orders in the quotient ring $A$ \cite[Corollary 13.9.7]{mcconnell-robson01}, the ring $T(R)$ is also a prime PI ring.
  Moreover, $T(R)$ is integral over $t(R)$.
  Since $t(R) \subseteq Z(T(R))$, the ring $T(R)$ is also integral over its center.
  Thus $T(R)$ is a BF-ring by the case we have already established.

  The right $R$-module $T(R)_R$ is finitely generated \cite[Proposition 13.9.11]{mcconnell-robson01}, so there exist $t_1$, $\ldots\,$, $t_n \in t(R)$ that generate $T(R)_R$.
  Thus $T(R)$ is a finite centralizing extension of $R$.
  \Cref{p:pi-ext-units} implies $R^\times = T(R)^\times \cap R$, and hence \ref{bf-sub:sub} of \cref{l:bf-sub} applied to $R^\bullet \hookrightarrow T(R)^\bullet$ implies that $R$ is a BF-ring. (Note that $R^\bullet = A^\times \cap R = T(R)^\bullet \cap R$.)
\end{proof}

\begin{example} \label{ex:affine-pi-not-bf}
  Let $K$ be a field, and suppose that $R$ is a prime PI ring that is an affine $K$-algebra, but not necessarily noetherian.
  Then $T(R)$ is a noetherian ring \cite[Proposition 13.9.11]{mcconnell-robson01}.
  However, in this setting the BF-property does not necessarily descend to $R$.
  Consider \cite[Example 13.9.9]{mcconnell-robson01}: Let
  \[
    S \coloneqq \bQ[x,y,y^{-1}] \qquad\text{and}\qquad R \coloneqq \begin{pmatrix} S & xS \\ S & \bQ[y] + xS \end{pmatrix} \subseteq M_2(S).
  \]
  Then $R$ is an affine $\bQ$-algebra \cite[\S 10.2]{mcconnell-robson01}, and
  \[
    T(R) = \begin{pmatrix} S & xS \\ S & S \end{pmatrix}
  \]
  is noetherian.
  The conclusion of \cref{p:pi-ext-units} does not hold for $R \subseteq T(R)$, for instance
  \[
    \begin{pmatrix} 1 & 0  \\ 0 & y \end{pmatrix} \in (T(R)^\times \cap R) \setminus R^\times.
  \]
  Moreover $R$ is not atomic (and therefore not a BF-ring).
  To see this, consider elements of the form
  \begin{equation} \label{e:apb:special-form}
    A = \begin{pmatrix}
      a & xb \\
      c & xd
    \end{pmatrix} \in R \quad\text{with $a$, $b$, $c$,~$d \in S$ and $\det(A) \ne 0$}.
  \end{equation}
  Then $\det(A) \not \in S^\times$ implies $A \not \in T(R)^\times $, and hence $A \not \in R^\times$.
  From $\det(A) \ne 0$ we get $A \in R^\bullet \subseteq M_2(S)^\bullet$.
  The obvious factorization
  \[
    A = \begin{pmatrix} 1 & 0 \\ 0 & y \end{pmatrix} \begin{pmatrix} a & xb \\ cy^{-1} & xdy^{-1} \end{pmatrix}
  \]
  shows that $A$ is not an atom.

  Suppose
  \[
    A =
    \underbrace{\begin{pmatrix} a_1 & xb_1 \\ c_1 & d_1 \end{pmatrix}}_{A_1} \underbrace{\begin{pmatrix} a_2 & xb_2 \\ c_2 & d_2 \end{pmatrix}}_{A_2},
  \]
  with $a_i$, $b_i$,~$c_i \in S$, and $d_i \in \bQ[y] + xS$ for $i \in \{1,2\}$.
  Then $d_1 \in xS$ or $d_2 \in xS$ as $xd=x c_1 b_2 + d_1d_2 \in xS$.
  Thus one of the $A_i$ is again of a form as in \cref{e:apb:special-form}.
  Repeating this argument recursively, we see that every representation of $A$ as a product of nonunits must contain some factor $B$ that is of the form as in \cref{e:apb:special-form}.
  However, then $B$ is not an atom, and therefore the ring $R$ is not atomic.
\end{example}

\section{Algebras of quadratic growth} \label{sec:quadratic}

Throughout this section, let $R$ be an affine prime algebra over a field $K$  and let $\GKdim(R)$ denote the Gelfand-Kirillov-dimension of $R$.
Then it is well-known that $\GKdim(R) \in \{0,1\} \cup \bR_{\ge 2}$ (see \cite{GW}).
If $\GKdim(R)=0$, then $R$ is finite-dimensional, in particular artinian, and therefore has BF.
If $\GKdim(R) = 1$, then a theorem of Small and Warfield shows that $R$ is prime noetherian PI \cite{small-warfield84,small-stafford-warfield85}, and therefore $R$ has BF by \cref{c:prime-pi}.
Thus $\GKdim(R)=2$ is the smallest remaining case.
In the following we deal with the case of $R$ having quadratic growth.
If $R$ has quadratic growth, then $\GKdim R = 2$, but the converse is not true in general.

Recall that a \defit{frame} is finite-dimensional vector subspace $V \subseteq R$ containing $1$ and a generating set of $R$.
The algebra $R$ has \defit{quadratic growth} if there exists a frame $V$ and constants $c_1$, $c_2 \in \bR_{>0}$ such that $c_1 n^2 \le \dim V^n \le c_2 n^2$ for all $n \ge 1$.

By \cite{bell09}, an affine simple Goldie $K$-algebra of quadratic growth is noetherian and has Krull dimension 1.
Thus \cref {p:smalldim} applies, and such algebras are BF-rings.
We now consider affine noetherian prime algebras of quadratic growth, that are not necessarily simple.
Since the PI case is covered by \cref{c:prime-pi}, we will eventually further restrict to non-PI algebras.

\begin{lemma} \label{l:intersection}
  Let $S$ be a noetherian prime ring, and let $a \in S^\bullet$ be a nonunit.
  Then
  \[
    S^\bullet \cap \bigcap_{k \ge 0} a^kS = \emptyset.
  \]
\end{lemma}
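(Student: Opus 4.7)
The plan is to argue by contradiction. Suppose there exists $b \in S^\bullet$ lying in $a^k S$ for every $k \ge 0$. For each $k$ write $b = a^k c_k$ with $c_k \in S$. Since $S$ is a prime noetherian ring, the non-zero-divisor $a^k$ is left-cancellable (indeed $S^\bullet$ is divisor-closed in $S$ by \cref{l:cancellative}\ref{cancellative:divclosed}), so the element $c_k$ is uniquely determined by $k$, and comparing
\[
  a^k c_k = b = a^{k+1} c_{k+1} = a^k(a c_{k+1})
\]
and left-cancelling $a^k$ yields $c_k = a c_{k+1}$ for all $k \ge 0$.

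The relation $c_k = a c_{k+1}$ says $c_k \in S c_{k+1}$, so we obtain an ascending chain of principal left ideals
\[
  S c_0 \subseteq S c_1 \subseteq S c_2 \subseteq \cdots.
\]
Since $S$ is left noetherian this chain stabilizes: there exists $N \ge 0$ with $S c_N = S c_{N+1}$. Thus $c_{N+1} = s c_N = s a c_{N+1}$ for some $s \in S$.

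Next I would verify that $c_{N+1}$ is a non-zero-divisor. This follows again from \cref{l:cancellative}\ref{cancellative:divclosed}: $b = a^{N+1} c_{N+1}$ lies in $S^\bullet$, and $S^\bullet$ is divisor-closed in the prime (Goldie) ring $S$, so $c_{N+1} \in S^\bullet$. Right-cancelling $c_{N+1}$ in $c_{N+1} = s a c_{N+1}$ yields $1 = sa$. But then $(as - 1)a = a s a - a = a - a = 0$, and right-cancelling the non-zero-divisor $a$ gives $as = 1$ as well. Hence $a \in S^\times$, contradicting the hypothesis that $a$ is a nonunit.

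The only step requiring care is the derivation $c_k = a c_{k+1}$, which relies on the cancellativity of $a^k$, and the subsequent use of noetherianity on the \emph{left} ideals $S c_k$ (note the sidedness swap: we are told $b \in a^k S$ but the resulting chain is of principal \emph{left} ideals). Everything else is routine once the correct chain has been identified.
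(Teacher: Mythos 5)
Your proof is correct and is essentially the paper's argument: both write $b=a^kc_k$, cancel $a^k$ to get $c_k=ac_{k+1}$, and use the chain $Sc_0\subseteq Sc_1\subseteq\cdots$ against left noetherianity. The only cosmetic difference is that the paper asserts the chain is strictly ascending (the strictness hides exactly your stabilization-implies-$a\in S^\times$ computation), whereas you let the chain stabilize and then derive the contradiction explicitly.
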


\begin{proof}
  Suppose not, and let $b \in \bigcap_{k \ge 0} a^kR$ with $b$ cancellative.
  Then, for every $k \ge 0$, there exists a $c_k \in R$ such that $b= a^k c_k$.
  Cancellativity of $a$ implies $c_k = a c_{k+1}$.
  Since $b$ is cancellative, so is each of the $c_k$.
  Thus $Sc_0 \subsetneq \cdots \subsetneq Sc_k \subsetneq S c_{k+1} \subsetneq \cdots$ is an infinite ascending chain of principal left ideals, in contradiction to $S$ being left noetherian.
\end{proof}

\begin{definition}
  An element $a \in R^\bullet$ is an \defit{almost unit} if $\dim_K R/aR < \infty$.
\end{definition}

If $a$, $b \in R^\bullet$ are almost units, then $R \supseteq aR \supseteq abR$ and $aR/abR \cong R/bR$ show $\dim R/abR = \dim R/aR \cdot \dim R/bR$.
In particular, the almost units form a submonoid of $R^\bullet$.

We will proceed to show that in the setting of interest to us, every almost unit is in fact a unit (\cref{l:no-almost-units} below).
To do so, we need to use a result from the theory of generalized polynomial identities (GPIs).
To avoid unnecessary technicalities (since we will assume that $R$ is noetherian later on), assume that $R$ is a Goldie ring, so that it has a classical quotient ring.
Let $Q=Q(R)$ be the quotient ring of $R$ and let $Z=Z(Q)$ be its center.
We consider elements of the free product (coproduct) $Q *_Z Z[T]$ of the $Z$-algebras $Q$ and $Z[T]$ as generalized polynomials.
In this ring, coefficients from $Q$ do not in general commute with the indeterminate $T$, but elements of $Z$ do commute with $T$.
More specifically, we will only deal with linear generalized polynomials, that is, elements of the form
\[
  \sum_{i=0}^k a_i T b_i \qquad\text{with $a_i$,~$b_i \in Q$.}
\]
The $Z$-vector space of linear generalized polynomials is isomorphic to $Q \otimes_Z Q$ via $\sum_{i=0}^k a_i T b_i \mapsto \sum_{i=0}^k a_i \otimes b_i$ (see \cite[Remark 6.1.1]{beidar-martindale-mikhalev96}).
A reference for GPIs is \cite{beidar-martindale-mikhalev96}.

The following result was proven by Martindale \cite[Theorem 2]{martindale69}.
Another reference is \cite[Corollary 6.1.3]{beidar-martindale-mikhalev96}, noting that the expression below corresponds to a nonzero generalized polynomial identity.

\begin{lemma} \label{l:no-linear-gpi}
  Let $S$ be a prime Goldie ring, $Z=Z(Q(S))$ its extended center, and let $a_1$, $\ldots\,$, $a_n$, $b_1$, $\ldots\,$, $b_n \in S$ be such that $a_1$, $\ldots\,$,~$a_n$ are $Z$-linearly independent and $b_1 \ne 0$.
  Then
  \[
    \sum_{i=1}^n a_i x b_i \ne 0
  \]
  for some $x \in S$.
\end{lemma}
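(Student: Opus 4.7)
The plan is to prove the contrapositive: if $\sum_{i=1}^n a_i x b_i = 0$ for all $x \in S$, then either the $a_i$ are $Z$-linearly dependent or every $b_i$ vanishes. Since $S$ is prime Goldie, $Q = Q(S)$ is simple artinian with center the field $Z$, so $Q$ is automatically a free $Z$-module. Using the right Ore property, I would first extend the vanishing of the identity from $S$ to $Q$: for $q = sc^{-1} \in Q$ with $s \in S$, $c \in S^\bullet$, the expression $\sum a_i q b_i$ equals $\bigl(\sum a_i s b_i\bigr) \cdot \text{(common denominator manipulation)}$, and the known vanishing on all of $S$ yields vanishing on $Q$. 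Thus I may assume $\sum a_i x b_i = 0$ for all $x \in Q$.

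Next I would invoke the $Z$-linear identification, stated immediately before the lemma, between the space of linear generalized polynomials and $Q \otimes_Z Q$, under which $\sum a_i T b_i$ corresponds to $t := \sum a_i \otimes b_i$. The evaluation map becomes
\[
  \Phi \colon Q \otimes_Z Q \longrightarrow \operatorname{End}_Z(Q), \qquad a \otimes b \mapsto \bigl(x \mapsto a x b\bigr),
\]
and the hypothesis says $\Phi(t) = 0$. If one knows that $\Phi$ is injective, then $t = 0$ in $Q \otimes_Z Q$. Since $Z$ is a field, $Q$ is $Z$-free; extending $a_1, \ldots, a_n$ to a $Z$-basis of $Q$ yields a direct sum decomposition $Q \otimes_Z Q = \bigoplus_{i} a_i \otimes Q$, in which the relation $\sum_{i=1}^n a_i \otimes b_i = 0$ forces every $b_i = 0$, contradicting $b_1 \neq 0$.

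The main obstacle is thus the injectivity of $\Phi$, which is the actual content of Martindale's theorem. In the finite-dimensional case, where $Q$ is a central simple $Z$-algebra of finite dimension, this is immediate from the classical isomorphism $Q \otimes_Z Q^{\mathrm{op}} \xrightarrow{\sim} \operatorname{End}_Z(Q)$. In general I would argue by induction on the minimal length of a putative relation $\sum_{i=1}^m a_i \otimes b_i \in \ker \Phi$ with all $a_i$ and $b_i$ nonzero: substituting $x \mapsto x y b_m$ and $x \mapsto x b_m$ for arbitrary $y \in Q$, multiplying on the right by suitable elements, and subtracting two copies of the identity removes the $m$-th summand. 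By primeness of $Q$, either the resulting shorter relation is nonzero (contradicting minimality) or it is identically zero for every $y \in Q$, in which case one extracts a nonzero element $z$ of the extended centroid---hence of $Z$---witnessing a $Z$-linear dependence among $a_1, \ldots, a_m$. This last step is the classical Martindale argument, which is why the excerpt quotes the result rather than reproving it.
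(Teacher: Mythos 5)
Your overall plan has a genuine gap, and it sits exactly where the real content of the lemma lies: the passage from ``$\sum_i a_i x b_i = 0$ for all $x \in S$'' to ``$\sum_i a_i q b_i = 0$ for all $q \in Q$''. The common-denominator manipulation you invoke does not work: for $q = sc^{-1}$ the inverted element sits \emph{between} $s$ and the $b_i$, and writing $c^{-1}b_i = b_i' d^{-1}$ with a common right denominator $d$ gives $\sum_i a_i s b_i' \, d^{-1}$, where the $b_i'$ are new elements depending on $c$; the hypothesis, which concerns the original $b_i$, says nothing about this expression. In fact the transfer statement you need (a linear identity with coefficients in $Q$ vanishing on an order vanishes on $Q$) is essentially equivalent to the lemma itself, not a routine Ore computation. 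Note also that once you are allowed to evaluate at all $x \in Q$, Martindale's theorem becomes unnecessary: $Q$ is simple artinian with center the field $Z$, so $Q \otimes_Z Q^{\mathrm{op}}$ is simple (tensor product of a central simple algebra with a simple algebra), hence the nonzero ring homomorphism $\Phi$ to $\End_Z(Q)$ is automatically injective, and your basis argument finishes. So your proposal effectively relocates the entire difficulty into the glossed-over extension step, while the induction you sketch afterwards (over $y \in Q$, using primeness of $Q$) only re-proves the easy injectivity statement.

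The honest version of the argument in your last paragraph must be run with $x$ ranging over $S$ only, using primeness of $S$, and the ``extraction of an element of the extended centroid'' there is precisely Martindale's construction of a centralizing element from a partially defined bimodule map on a nonzero ideal --- nontrivial work that you defer to ``the classical argument''. That deferral is consistent with the paper, which does not prove the lemma either: it simply cites Martindale's theorem (and the treatment in Beidar--Martindale--Mikhalev, where the identity corresponds to a nonzero generalized polynomial identity). But as a proof the proposal is incomplete: either cite the result, as the paper does, or carry out the induction inside $S$ with the extended-centroid construction spelled out, in which case the detour through $Q \otimes_Z Q$ and the unjustified $S$-to-$Q$ extension can be dropped altogether.
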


For the remainder of the section, let $R$ be an affine noetherian prime ring of quadratic growth that is not PI.

\begin{lemma} \label{l:no-almost-units}
  Every almost unit in $R$ is a unit.
\end{lemma}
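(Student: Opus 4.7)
The plan is to argue by contradiction: suppose $a \in R^\bullet \setminus R^\times$ is an almost unit, so $m := \dim_K R/aR$ is finite, and then produce a nontrivial linear generalized polynomial identity, contradicting \cref{l:no-linear-gpi}. To set up, the right $R$-action on $R/aR$ gives a ring antihomomorphism $\rho \colon R \to \End_K(R/aR) \cong M_m(K)$, whose kernel is the two-sided ideal $P = \{r \in R : Rr \subseteq aR\}$, which by construction is contained in $aR$. Since $R/P$ embeds into $M_m(K)$ it is PI, so the assumption that $R$ is not PI forces $P \ne 0$; being a nonzero two-sided ideal of the prime Goldie ring $R$, the ideal $P$ contains a regular element.

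I would first rule out the ``easy'' case by a Dedekind-finiteness argument. If the image $\bar a \in R/P$ were a unit, say $\bar a \bar b = \bar 1$, then $ab - 1 \in P \subseteq aR$ would give $ab = 1 + ac$ for some $c \in R$, whence $a(b - c) = 1$; the noetherian ring $R$ being Dedekind-finite would then force $a \in R^\times$, contradicting the assumption. Since $R/P$ is a finite-dimensional, hence artinian, $K$-algebra, every element is either a unit or a two-sided zero-divisor, so $\bar a$ must be a zero-divisor. Combining $P \subseteq aR$ with the fact that $P$ is two-sided, one obtains, for every $r \in R$ and every $p \in P$, that $rp \in P \subseteq aR$ and hence $a^{-1}rp \in R$ in the Goldie quotient ring $Q(R)$.

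The decisive step is to upgrade these partial relations into a genuine linear GPI. Here I would invoke the theorem of Bell and Smoktunowicz \cite[Theorem~1.2]{bell-smoktunowicz10}: under our hypotheses the extended centroid $C$ of $R$ is algebraic over $K$. In particular $Z(R) \subseteq C$ is a field, which rules out $a \in Z(R)$ (otherwise $a$ would already be a unit). Combining the algebraicity of $C$ with a careful analysis of how $a$ acts on the two-sided ideal $P$ and on the finite-dimensional quotient $R/P$, I would construct $C$-linearly independent elements $\alpha_1, \ldots, \alpha_n$ in the symmetric Martindale quotient of $R$ together with elements $\beta_1, \ldots, \beta_n \in R$, with $\beta_1 \ne 0$, such that $\sum_{i=1}^n \alpha_i x \beta_i = 0$ for all $x \in R$, then invoke \cref{l:no-linear-gpi} for the contradiction. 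The main obstacle is exactly the explicit construction of such $\alpha_i$ and $\beta_i$: the natural candidate, the right $R$-linear map $\phi \colon P \to R$, $p \mapsto a^{-1}p$, fails left $R$-linearity unless $a$ is central, so $a^{-1}$ does not sit in $C$ and a direct Martindale extension argument does not apply. The correct route will extract a linear GPI from the non-bimodule error of $\phi$ by using the finite dimensionality of $R/P$ and the algebraicity of $C$ to cancel the obstruction.
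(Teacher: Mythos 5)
Your preliminaries are fine as far as they go (the annihilator $P=\ann(R/aR)$ is the largest two-sided ideal inside $aR$, $R/P$ embeds in $M_m(K)$ so it is finite-dimensional and PI, hence $P\neq 0$ and $\bar a$ is a zero-divisor in $R/P$), but the proof has a genuine gap exactly where you acknowledge it: the construction of the nontrivial linear generalized polynomial identity is never carried out. Saying that you ``would construct $C$-linearly independent $\alpha_i$ and elements $\beta_i$ with $\sum_i \alpha_i x\beta_i=0$'' and that ``the correct route will extract a linear GPI from the non-bimodule error of $\phi\colon p\mapsto a^{-1}p$'' is a statement of intent, not an argument; you yourself note that the natural candidate fails left linearity, and no mechanism is given for cancelling that obstruction. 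Moreover, the ingredient you import from Bell--Smoktunowicz (algebraicity of the extended centroid over $K$) is not the one that feeds into \cref{l:no-linear-gpi}: what Martindale's lemma needs is a supply of elements that are linearly independent over the extended center, and in this situation the relevant fact is the opposite-flavoured one, namely that the nonunit $a$ (being non-algebraic over $K$) is not algebraic over the extended center, so that $1,a,a^2,\dots$ are independent and can serve as the left coefficients. (The aside ``$Z(R)\subseteq C$ is a field'' is also unjustified: a subring of a field need not be a field.)

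For comparison, the paper's proof does produce the GPI explicitly and by a different route. It chooses a finite-dimensional complement $V$ with $R=V\oplus aR$, so that $R=\sum_{i<n}a^iV\oplus a^nR$ for all $n$, and then, using right noetherianity (stabilization of the chains generated by $f_{j-1}(v_j)$ and the commutators $[f_{j-1}(v_j),a^i]$), iteratively builds linear generalized polynomials $f_j=\sum_i a^iTc_i$ with last coefficient $1$ that kill the basis vectors of $V$; this yields $f_m(b)\in\bigcap_{n\ge 0}a^nR$ for all $b\in R$. Then \cref{l:intersection} shows this intersection contains no regular element, hence is not essential, so some $0\neq z\in R$ annihilates it, giving $zf_m(b)=0$ for all $b$ — a nonzero linear GPI (the left coefficients involve the $Z$-independent powers of $a$, the last right coefficient is $1$), contradicting \cref{l:no-linear-gpi}. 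Your proposal uses neither the complement $V$ with its polynomial bookkeeping, nor the noetherian stabilization trick, nor \cref{l:intersection}; without some replacement for these, the contradiction with Martindale's theorem is not reached, so the argument as it stands is incomplete.
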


\begin{proof}
  Suppose that $a \in R^\bullet \setminus R^{\times}$ with $\dim_K R/aR < \infty$.
  Since $a$ is not a unit, it is not algebraic over $K$.
  By a theorem of Bell and Smoktunowicz \cite[Theorem 1.3]{bell-smoktunowicz10}, the element $a$ is also not algebraic over the extended center $Z=Z(Q)$, where $Q=Q(R)$ is the quotient ring of $R$.

  Let $V \subseteq R$ be a finite-dimensional $K$-vector space such that $R = V \oplus_K aR$.
  Since $aR \cong R$ as right $R$-modules, we have $R = \sum_{i=0}^{n-1} a^i V \oplus a^n R$ for all $n \ge 0$.
  Writing $v_1$, $\ldots\,$,~$v_m$ for a $K$-basis of $V$, for all $b \in R$ and $n \ge 0$ we may write
  \[
    b \in \sum_{i=1}^m B_{i,n}(a) v_i + a^nR
  \]
  with $B_{i,n} \in K[x]$ polynomials depending on $i$, $n$, and $b$.

  For $j \in [0,m]$ we iteratively construct generalized linear polynomials 
  $$f_j(T) \in Q *_Z Z[T],$$
   which are in fact of the form $f_j = \sum_{i=0}^k a^i T c_i$ with $c_i \in R$, as follows.
  Let $f_0 = T$.
  For $j \in [1,m]$, and with $f_{j-1}$ already constructed, consider the ascending chain of right ideals $\big( f_{j-1}(v_j)R + \sum_{i=1}^n [f_{j-1}(v_j), a^i] R \big)_{n \ge 0}$, where $[f_{j-1}(v_j),a^i]$ denotes the commutator.
  Since $R$ is right noetherian the chain stabilizes, and hence there exist $n_j \ge 0$ and $r_{j,i} \in R$ with $r_{j,n_j}=1$ such that
  \[
    f_{j-1}(v_j) r_{j,0} + \sum_{i=1}^{n_j} [f_{j-1}(v_j), a^i] r_{j,i} = 0.
  \]
  We set $f_j = f_{j-1} r_{j,0} + \sum_{i=1}^{n_j} [f_{j-1}, a^i] r_{j,i}$.

  By construction $f_j(v_i) = 0$ if $i \le j$, and therefore
  \[
    f_j(b) \in \sum_{i=j+1}^m B_{i,n}(a) f_j(v_i)  + a^n R.
  \]
  We conclude that $f_m(b) \in a^nR$ for all $b \in R$ and all $n \ge 0$.
  Expanding $f_m$, we see that it is of the form $\sum_{i=0}^k a^i T c_i$ with $c_i \in R$ and $c_k= r_{1,n_1} \cdots r_{m,n_m} = 1$.

  \Cref{l:intersection} implies that $I=\bigcap_{n \ge 0} a^nR$ is not essential as right ideal of $R$.
  Hence, with $Q$ denoting the quotient ring of $R$, we have $IQ \ne Q$ and so there exists $0 \ne z \in Q$ such that $zIQ=0$.
  Clearing denominators, we may assume $z \in R \setminus\{0\}$.
  Hence $zf_m(b) = 0$ for all $b \in R$. 
  This contradicts \cref{l:no-linear-gpi}.
\end{proof}

\begin{lemma} \label{l:quad-upper-bound}
  Let $V$ be a frame of $R$ and $k \ge 0$.
  Then there exists a constant $C \in \bR_{>0}$ such that for infinitely many $n \ge k$,
  \[
    \dim V^n / V^{n-k} \le C n.
  \]
\end{lemma}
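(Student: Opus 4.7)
The plan is to exploit the quadratic upper bound $\dim_K V^n \le c n^2$ that is built into the hypothesis of quadratic growth, via a telescoping argument along an arithmetic progression with common difference $k$. Writing $a_n = \dim_K V^n$ and fixing $c > 0$ with $a_n \le c n^2$ for all $n \ge 1$, I plan to establish the lemma with the explicit constant $C = 2ck + 1$ (any constant strictly greater than $2ck$ would work).

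The argument will proceed by contradiction. Suppose only finitely many $n \ge k$ satisfy $a_n - a_{n-k} \le Cn$; then there exists some $N \ge k$ with $a_n - a_{n-k} > Cn$ for every $n \ge N$. Telescoping along the arithmetic progression $N, N+k, N+2k, \ldots$ then yields, for every $m \ge 1$,
\[
  a_{N+mk} \;=\; a_N + \sum_{j=1}^m \bigl(a_{N+jk} - a_{N+(j-1)k}\bigr) \;>\; a_N + \sum_{j=1}^m C(N+jk) \;\ge\; \frac{Ck}{2}\,m^2.
\]
On the other hand, the quadratic upper bound gives $a_{N+mk} \le c(N+mk)^2 = ck^2 m^2 + O(m)$ as $m \to \infty$. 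Comparing leading $m^2$ coefficients forces $\tfrac{Ck}{2} \le ck^2$, i.e.\ $C \le 2ck$, contradicting the choice $C > 2ck$.

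I do not foresee any real obstacle: this is essentially a pigeonhole argument using only the quadratic upper bound on $\dim V^n$. In particular, the lower bound $c_1 n^2 \le \dim V^n$ from the definition of quadratic growth, as well as the prime and non-PI hypotheses on $R$, play no role here — the lemma would remain true for any affine $K$-algebra with $\dim V^n = O(n^2)$. The only small point worth checking is that the threshold $N$ can be taken at least $k$, so that every subscript appearing in the telescoping sum is nonnegative, and this is immediate by enlarging $N$ if necessary.
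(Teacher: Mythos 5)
Your argument is correct and is essentially the paper's own proof: both proceed by contradiction and telescope $\dim V^{N+jk}/V^{N+(j-1)k} > C(N+jk)$ along the arithmetic progression of step $k$ to force super-linear-in-$C$ quadratic growth, contradicting the bound $\dim V^n \le c n^2$. The only cosmetic difference is that you fix the explicit constant $C = 2ck+1$ and compare leading coefficients, whereas the paper lets $C$ be arbitrary and invokes the contradiction for arbitrarily large $C$.
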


\begin{proof}
  Let $C \in \bR_{>0}$.
  Suppose there exists $N \ge k$ such that for all $n \ge N$, we have $\dim V^n / V^{n-k} > C n$.
  Set $d(n) = \dim V^n$.
  Then
  \[
    \begin{split}
      d(N + lk) &= d(N) + \sum_{i=1}^l\big( d(N+ik) - d(N+(i-1)k) \big) \\
      & > d(N) + C \sum_{i=1}^l (N+ik) \\
      &=d(N) + CNl + Ck \sum_{i=1}^l i.
    \end{split}
  \]
  Since the last sum grows quadratically in $l$, and this is true for arbitrarily large $C$, this contradicts the quadratic growth hypothesis.
\end{proof}

\begin{lemma} \label{l:quad-upper-bound2}
  Let $a \in R^\bullet$.
  Then there exists a $C \in \bR_{>0}$ such that
  \[
    \dim ((V^n+aR)/aR) \le C n
  \]for infinitely many $n$.
\end{lemma}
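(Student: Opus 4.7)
The plan is to reduce the claim directly to \cref{l:quad-upper-bound}. First, choose $k \ge 0$ such that $a \in V^k$. Then $aV^{n-k} \subseteq V^k V^{n-k} \subseteq V^n$ for all $n \ge k$, and of course $aV^{n-k} \subseteq aR$, so that $aV^{n-k} \subseteq V^n \cap aR$.

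Next, I would use the standard isomorphism $(V^n + aR)/aR \cong V^n/(V^n \cap aR)$, which together with the inclusion $aV^{n-k} \subseteq V^n \cap aR$ yields
\[
  \dim\bigl((V^n + aR)/aR\bigr) \;\le\; \dim V^n - \dim aV^{n-k}.
\]
Since $a \in R^\bullet$, left multiplication by $a$ is an injective $K$-linear map $R \to R$, hence restricts to a $K$-linear isomorphism $V^{n-k} \isomto aV^{n-k}$, so that $\dim aV^{n-k} = \dim V^{n-k}$. Combined with the inclusion $V^{n-k} \subseteq V^n$, this gives
\[
  \dim\bigl((V^n + aR)/aR\bigr) \;\le\; \dim V^n - \dim V^{n-k} \;=\; \dim V^n/V^{n-k}.
\]

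Finally, \cref{l:quad-upper-bound} applied to this fixed $k$ produces a constant $C \in \bR_{>0}$ and infinitely many $n \ge k$ with $\dim V^n/V^{n-k} \le Cn$, which yields the desired bound.

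There is really no obstacle here: the only non-trivial ingredient beyond linear algebra is that $a$ is a non-zero-divisor, used to equate $\dim aV^{n-k}$ with $\dim V^{n-k}$. The quadratic growth hypothesis enters only through the previous lemma.
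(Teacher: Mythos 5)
Your argument is correct and is essentially the paper's own proof: both pick $k$ with $a \in V^k$, use $aV^{n-k} \subseteq V^n \cap aR$ together with $\dim aV^{n-k} = \dim V^{n-k}$ (cancellativity of $a$), and then invoke \cref{l:quad-upper-bound} for this fixed $k$. The only cosmetic difference is that you phrase the reduction via the isomorphism $(V^n+aR)/aR \cong V^n/(V^n \cap aR)$, while the paper uses the induced surjection $V^n/aV^{n-k} \to (V^n+aR)/aR$; these are the same estimate.
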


\begin{proof}
  Let $k \ge 0$ be such that $a \in V^k$ and let $n \ge k$.
  There is a homomorphism of $K$-vector spaces $\varphi\colon V^n \to R/aR$ with $\im \varphi = (V^n + aR)/aR$.
  Since $a \in V^k$, we have $aV^{n-k} \subseteq V^n$.
  Hence $\varphi$ induces an surjective vector space homomorphism $V^n / a V^{n-k} \to \im \varphi$.
  By \cref{l:quad-upper-bound}, there exists a $C \in \bR_{>0}$ such that $\dim V^n / a V^{n-k} \le Cn$ for infinitely many $n$.
  (Note that $\dim aV^{n-k} = \dim V^{n-k}$ since $a$ is a non-zero-divisor.)
\end{proof}

\begin{lemma} \label{l:quad-lower-bound}
  If $a$, $b \in R^\bullet$ with $aR \subsetneq bR \subseteq R$, then there exists $c \in \bZ_{\ge 0}$ such that
  \[
    \dim \Big(\big((V^n + aR) \cap  bR\big)/aR\Big) \ge n - c
  \]
  for all sufficiently large $n$.
\end{lemma}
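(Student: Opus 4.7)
The plan is to parameterize $bR/aR$ as $R/dR$, where $a = bd$, and then reduce the desired inequality to showing linear growth of $V^m$ modulo $dR$. Since $aR \subseteq bR$, we have $a = bd$ for some $d \in R$. Because $R$ is a noetherian prime ring, $R^\bullet$ is divisor-closed in $R$ by \cref{l:cancellative}, so $d \in R^\bullet$. Since $aR \subsetneq bR$ and $b$ is cancellative, $dR \subsetneq R$, so $d$ is a nonunit. The map $R/dR \to bR/aR$, $r + dR \mapsto br + aR$, is an isomorphism of right $R$-modules, since $b$ is cancellative.

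Next, pick $k \in \bZ_{\ge 0}$ with $b \in V^k$. Then $bV^{n-k} \subseteq V^n$, and since $aR \subseteq bR$, we obtain
\[
  (bV^{n-k} + aR)/aR \subseteq \big((V^n + aR) \cap bR\big)/aR
\]
for all $n \ge k$. Under the isomorphism above, $(bV^{n-k} + aR)/aR$ corresponds to $W_{n-k}$, where $W_m \coloneqq (V^m + dR)/dR \subseteq R/dR$. It therefore suffices to show $\dim_K W_m \ge m$ for all $m \ge 1$; the claim then follows with $c = k$.

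The key computation is that $W_m \cdot V = W_{m+1}$, using that $dR$ is a right ideal together with $V^{m+1} = V^m V$. Suppose, for contradiction, that $W_{m+1} = W_m$ for some $m \ge 1$. By induction, $W_m V^j = W_{m+j} = W_m$ for all $j \ge 0$, and since $R = \bigcup_j V^j$ (as $V$ contains $1$ and generates $R$), this yields $W_m \cdot R = W_m$. But $1 + dR \in W_m$ (because $1 \in V \subseteq V^m$), so $W_m \supseteq (1 + dR) R = R/dR$, forcing $R/dR$ to be finite-dimensional. This would mean $d$ is an almost unit, and then \cref{l:no-almost-units} would give $d \in R^\times$, contradicting that $d$ is a nonunit.

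Hence $W_m \subsetneq W_{m+1}$ for all $m \ge 1$, so $\dim_K W_m \ge m$ by induction from $\dim_K W_1 \ge 1$. The main step is the strict growth of the $W_m$; this is where \cref{l:no-almost-units} (and so indirectly the affine, noetherian, prime, quadratic growth, and non-PI hypotheses on $R$) enters. The rest of the argument is essentially bookkeeping through the isomorphism $bR/aR \cong R/dR$.
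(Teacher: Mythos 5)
Your proof is correct and follows essentially the same route as the paper: fix $k$ with $b \in V^k$, show the images of the $V^m$ inside $bR/aR$ grow strictly (via right multiplication by $V$), and rule out stabilization because it would make $bR/aR$ finite-dimensional, contradicting \cref{l:no-almost-units}. The only difference is cosmetic: you transport everything to $R/dR$ via $a=bd$, whereas the paper works directly with $\overline{W}V^n$ inside $R/aR$, where $\overline{W}=\big((V^k+aR)/aR\big)\cap bR/aR$.
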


\begin{proof}
  We write $\overline{V^n}$ for $(V^n + aR)/aR$.
  Let $k \ge 0$ be such that $b \in V^k$, and let $\overline{W} = \overline{V^k} \cap bR/aR$.
  Since $b + aR$ generates $bR/aR$, it follows that
  \[
    bR/aR = \bigcup_{n\ge 1} \overline{W} V^n.
  \]

  Suppose that there exists an $N \ge 1$ such that $\overline{W} V^{N+1} = \overline{W} V^N$.
  Then $\overline{W} V^{N+n} = \overline{W} V^N$ for all $n \ge 0$.
  This implies that $bR/aR$ is finite-dimensional, and hence $b^{-1}a \in R$ is an almost unit but not a unit, in contradiction to \cref{l:no-almost-units}.
  Hence we must have $\overline{W} V^n \subsetneq \overline{W} V^{n+1}$ for all $n \ge 0$.
  Observe $\overline{W} V^n \subseteq \overline{V^{n+k}} \cap bR/aR$.
  Thus
  \[
    \dim(\overline{V^{k+n}} \cap bR/aR) \ge \dim(\overline{W} V^{n}) \ge \dim(\overline{W}) + n. \qedhere
  \]
\end{proof}

\begin{theorem} \label{thm: growth}
  Let $K$ be a field and let $R$ be an affine noetherian prime $K$-algebra of quadratic growth.
  Then $R$ is a BF-ring.
\end{theorem}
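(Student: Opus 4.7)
The plan is to construct a right length function on $R^\bullet$ and then appeal to \cref{t:bf}. Since the PI case is already covered by \cref{c:prime-pi}, I may restrict to the case that $R$ is not PI, where \cref{l:quad-upper-bound2} and \cref{l:quad-lower-bound} are available.

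Fix a frame $V$ of $R$ and, for $a \in R^\bullet$, set
\[
  \mu(a) \coloneqq \liminf_{n \to \infty} \frac{\dim_K (V^n + aR)/aR}{n}.
\]
\Cref{l:quad-upper-bound2} ensures $\mu(a) < \infty$; for $a \in R^\times$ one has $aR = R$, and hence $\mu(a) = 0$. The main task will be to prove the superadditivity estimate
\[
  \mu(a) \ge 1 + \mu(b) \quad\text{whenever $a = bc$ with $c \in R^\bullet \setminus R^\times$.}
\]
Once this is in place, $\lambda(a) \coloneqq \lfloor \mu(a) \rfloor$ is a $\bZ_{\ge 0}$-valued function on $R^\bullet$ with $\lambda(a) \ge 1 + \lambda(b) > \lambda(b)$ whenever $a = bc$ with $c$ a nonunit, hence a right length function. \Cref{t:bf} then finishes the proof.

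For the superadditivity estimate: since $c$ is a nonunit, $aR \subsetneq bR \subseteq R$. The short exact sequence $0 \to bR/aR \to R/aR \to R/bR \to 0$ of right $R$-modules, viewed as one of $K$-vector spaces, restricts to
\[
  0 \to \bigl((V^n + aR) \cap bR\bigr)/aR \to (V^n + aR)/aR \to (V^n + bR)/bR \to 0,
\]
since the image of $(V^n + aR)/aR$ in $R/bR$ is precisely $(V^n + bR)/bR$. Picking $k$ so that $b \in V^k$, \cref{l:quad-lower-bound} bounds the leftmost dimension below by $n - k + 1$ for all sufficiently large $n$. Dividing the resulting dimension identity by $n$ and passing to $\liminf$ then yields $\mu(a) \ge 1 + \mu(b)$.

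The main conceptual hurdle I expect is uniformity in $n$: a direct approach would fix a single large $n$ realising the upper bound of \cref{l:quad-upper-bound2} and try to read off strict drops in $(V^n + aR)/aR$ along the chain $R \supsetneq u_1 R \supsetneq u_1 u_2 R \supsetneq \cdots \supsetneq aR$ arising from a factorization $a = u_1 \cdots u_k$, but the $n$ needed to detect each drop depends on the degrees of the partial products $u_1 \cdots u_j$ in $V$, hence on the factorization itself. Passing to the asymptotic invariant $\mu$ sidesteps this issue and reduces everything to the short exact sequence above combined with the bounds from \cref{l:quad-upper-bound2,l:quad-lower-bound}.
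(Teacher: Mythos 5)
Your proposal is correct and follows essentially the same route as the paper: the same reduction to the non-PI case, the same function $\lambda(a)=\lfloor\liminf_n \dim_K((V^n+aR)/aR)/n\rfloor$, and the same dimension decomposition along $aR\subsetneq bR$ using \cref{l:quad-upper-bound2} and \cref{l:quad-lower-bound}. The only differences are presentational (phrasing the decomposition as a short exact sequence and citing \cref{t:bf} explicitly), so there is nothing further to add.
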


\begin{proof}
  We may without loss of generality assume that $R$ is not a PI ring, as we have already proved the result for noetherian prime PI rings.
  For $a \in R^\bullet$ we define
  \[
    \lambda(a) = \Big\lfloor \liminf_{n \to \infty} \frac{\dim((V^n + aR)/aR)}{n} \Big\rfloor
  \]
  and claim that $\lambda$ is a right length function.
  By \cref{l:quad-upper-bound2}, the limit inferior is finite.
  Now, if $aR \subsetneq bR$ with $b \in R^\bullet$, then, using \cref{l:quad-lower-bound}, there exists $c \in \bZ_{\ge 0}$ such that
  \[
    \begin{split}
      \frac{\dim((V^n + aR)/aR)}{n} &= \frac{\dim(((V^n+aR) \cap bR) / aR)}{n} + \frac{\dim ((V^n + bR)/bR) }{n} \\
      &\ge 1 - \frac{c}{n} + \frac{\dim ((V^n + bR)/bR) }{n}.
    \end{split}
  \]
  Thus $\lambda(a) \ge 1 + \lambda(b)$.
\end{proof}

\begin{example}
  The ring $R$ from Example~\ref{ex:affine-pi-not-bf} is $\bQ$-affine with frame $V$ generated by $e_{11}$, $x e_{11}$, $y e_{11}$, $y^{-1} e_{11}$, $e_{21}$, $e_{12}x$, $e_{22}$, $e_{22}y$ (see \cite[\S 13.10.2]{mcconnell-robson01}; we add $e_{11}$ to have $1 \in V$, as is our convention).
  Now $V^n$ contains the linearly independent set $\{\, x^{i}y^{m-i}e_{11} : 0 \le m \le n,\, 0 \le i \le m \,\}$.
  On the other hand $V^n$ is contained in $\{\, x^{i}y^{m-i}e_{kl} : 0 \le m \le n,\, 0 \le i \le m,\, k, l \in \{1,2\} \,\}$.
  Thus $R$ is a prime PI $\bQ$-affine algebra of quadratic growth that is not atomic, and in particular, not a BF-ring.
  We see that the condition that $R$ be noetherian in \cref{thm: growth} cannot simply be dropped.
\end{example}


\section{Sufficient homological conditions for BF}\label{hom}

This section contains two main results. When restricted to algebras over a field, the first of these results (Corollary \ref{AG}) is a special case of the second (Theorem \ref{gradejump}). Despite this, we have included a separate proof of Corollary \ref{AG} before proceeding to Theorem \ref{gradejump}, since it applies to rings rather than to algebras. The basic underlying idea is similar in both cases - namely, a function from (isomorphism classes of) modules to ordinals is defined, with properties which preclude the possibility of factorizations of unbounded length. But in order to achieve this the second result requires significantly more heavy-duty technology than the first.

In $\S$\ref{AusGor} we state a simple and very general lemma, Lemma \ref{easyandimportant} and then apply it to prove in Corollary \ref{AG} that every Auslander Gorenstein noetherian ring is a BF-ring. In $\S$\ref{dualizing} the concept of an Auslander dualizing complex is recalled from \cite{YZ}, and used to prove Theorem \ref{gradejump}, yielding for example property BF for all prime noetherian algebras which are factors of an algebra with such a dualizing complex. Consequences for noetherian algebras of finite Gelfand-Kirillov dimension are discussed in Corollary \ref{GKBF}. Finally, in $\S$\ref{commagain} we show that for a commutative noetherian ring $R$ there is a very natural (and non-homological) choice of a function $j$ satisfying the hypotheses of Lemma \ref{easyandimportant}, hence yielding another proof of property BF for commutative noetherian domains. However, as we show by an example due to Hochster and Heitmann suggested to us, this function does not satisfy Gabber's Maximality Principle, which is key to the proof of Theorem \ref{gradejump}.

\subsection{Auslander-Gorenstein rings}\label{AusGor}
\begin{notation} \label{notation} {\rm Suppose that $j$ is a map from $ \mathrm{Mod} _f (R)$, a representative set of isomorphism classes of finitely generated right
$R$-modules, to the set of ordinal numbers. Then:\\
(1) $M \in \mathrm{Mod}_f (R)$ is called \defit{$j$-pure} if $j (M) = j(N)$  for every nonzero submodule $N$ of $M$. \\
(2) $j$ is called \defit{exact}, if for every finitely generated module $M$, and $0 \subseteq N \subseteq M$, 
$j (M) = \mathrm{inf} \{j (N) , j(M/N)\}$.\\
(3) If $\alpha$ is an ordinal number, then $j$ is called \defit{finitely partitive on $\alpha$} if for every  $M \in \mathrm{Mod}_f (R)   $
 with $j(M) = \alpha$, there is a finite bound on the length of chains of submodules of $M$ of the form
$M_0 \subseteq M_1 \subseteq \cdots \subseteq M_n = M$ with $j(M_{i + 1}/M_{i}) = \alpha$ for every $0 \leq i < n $. \\
(4) $j$ is said to satisfy the \defit{torsion property on $R$}, if $j(R/xR) \geq j(R) + 1$, for every nonunit regular element $x$ of $R$.  }
\end{notation}

Given a map $j$ from $\mathrm{Mod} _f (R)$ to a set of ordinal numbers and an infinitely generated module $M$,   we can 
 define 
$j(M) $ to be $\mathrm{inf}\{\, j (N)\, : \, 0 \neq N \subseteq M, N $  finitely generated $\}$. If $j$ is exact, then this extension remains exact. That is,  if $M$ is a right module, not necessarily finitely generated, then again $j (M) = \mathrm{inf}\{j(N) , j (M/N)\}$ for every nonzero submodule $N$ of $M$. 
To prove this, let $ 0  \subseteq N \subseteq M$ be a submodule of $M$, then 
by definition of $j$, it is easy to see that $j(M) \leq j(N)$. Now let $j (M/ N) = j (k_1 R + k_2 R +  \cdots +  k_n  R + N /  N)$. Then 
$j (M/ N ) = j(k_1 R + k_2 R +  \cdots +  k_n  R / (N \cap k_1R  + k_2R  +  \cdots +  k_n R )) \geq  j(k_1 R + k_2 R +  \cdots +  k_n R  ) \geq j(M)$. 

There exists a finitely generated submodule $N'$ of $M$ such that  $j(M) = j(N')$. Therefore,  $j (N') = j (N\cap N') $ or 
 $j (N') = j (N'/ N'\cap N) $. In the first case, we have  $j (N) \leq j (N'\cap N) = j(M) $  and so $j(M) =j (N) $ and in the second case,   $j(M/ N) \leq j(N' + N / N) = j(M)$ and so the equality holds. \\

In the rest of $\S$\ref{hom}, given a ring $R$, $j$ will always denote a function from $\mathrm{Mod}_f (R)$ to a set of ordinal numbers, which is then extended as above to infinitely generated modules. Thus we can talk about not necessarily finitely generated $j$-pure modules in a similar fashion to  Notation \ref{notation}(1).  

A tight bound on the value of $j$ on cyclic modules of the form $R/xR$ will be important for us in the sequel, as the following lemma confirms.

\begin{lemma} \label{easyandimportant} Let $R$ be a noetherian ring for which there exists a map $j$ from $\mathrm{Mod} _f (R)$  to a set of ordinal numbers such that, for a fixed ordinal number $\alpha$, $j$ is finitely partitive on $\alpha$ and  $j(R/xR) =  \alpha $ for every $x \in R^{\bullet} \setminus R^{\times}$. Then $R$ is a BF-ring. 
\end{lemma}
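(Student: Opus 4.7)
The plan is to bound the length of any factorization of a nonunit $a \in R^\bullet$ by extracting from it a chain of submodules of $R/aR$ whose successive factors each have $j$-value $\alpha$, and then invoking finite partitivity. Since $R$ is noetherian it satisfies the ACCP and is therefore atomic, so it suffices to show that $\sup \sL(a)$ is finite for every $a \in R^\bullet$.

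Fix $a \in R^\bullet \setminus R^\times$ and suppose $a = x_1 x_2 \cdots x_k$ with each $x_i \in R^\bullet \setminus R^\times$. Setting $y_i = x_1 \cdots x_i$ (with $y_0 = 1$ and $y_k = a$), I would first check that
\[
  R = y_0 R \supsetneq y_1 R \supsetneq \cdots \supsetneq y_k R = aR
\]
is a strictly descending chain of principal right ideals; strictness follows because each $x_{i+1}$ is a nonunit and $y_i$ is cancellative (equivalently, $R^\bullet$ is cancellative hence unit-cancellative, so no cancellative nonunit is right-invertible). Passing to quotients yields a strict chain
\[
  0 = y_k R / aR \subsetneq y_{k-1}R/aR \subsetneq \cdots \subsetneq y_0 R / aR = R/aR
\]
of length $k$ in $R/aR$, and left multiplication by the cancellative element $y_i$ induces an $R$-module isomorphism $R/x_{i+1}R \isomto y_i R / y_{i+1} R$, identifying each successive factor with a cyclic module of the form $R/xR$ with $x \in R^\bullet \setminus R^\times$.

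By hypothesis each such factor has $j$-value $\alpha$, and $j(R/aR) = \alpha$ as well. Finite partitivity of $j$ on $\alpha$, applied to the module $R/aR$, therefore produces a finite bound on $k$ depending only on $a$; setting $\lambda(a)$ to be this bound (and $\lambda(u) = 0$ for $u \in R^\times$) in fact defines a right length function on $R^\bullet$, so \cref{t:bf} yields that $R$ is a BF-ring. No step here is a serious obstacle: the whole point of the hypotheses on $j$ is precisely to turn the transition from a factorization of $a$ into a chain of submodules of $R/aR$ with controlled composition factors, after which finite partitivity takes over.
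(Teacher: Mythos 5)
Your proof is correct and is essentially the paper's own argument: both convert a factorization $a=x_1\cdots x_k$ into the chain $y_iR/aR$ of submodules of $R/aR$ with successive factors isomorphic to $R/x_{i+1}R$, each of $j$-value $\alpha$, and then invoke finite partitivity on $\alpha$ to bound $k$. Your extra remarks (strictness of the chain, atomicity via ACCP, packaging the bound as a right length function) only make explicit what the paper leaves implicit.
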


\begin{proof} Let $R$ be as stated and let $x \in R^{\bullet} \setminus R^{\times}$, so $j (R/ xR) = \alpha$. 
 Suppose that $x = a_1 \cdots a_t$ with $a_i \in R^{\bullet} \setminus R^{\times}$ for all $i$. Then there is a chain of submodules  
$$ 0 = M_0 \subseteq M_1=  a_1\cdots a_{t-1}R/xR \subseteq \cdots  \subseteq M_{t-1}=  a_1R/ xR \subseteq  M_{t}= R/xR$$ 
of $R/xR$,  with $M_{i + 1} / M_{i} \cong R/ a_{t-i }R$ for all $i = 0, \ldots , t-1$. 
By hypothesis, $ j (M_{i + 1} / M_{i} ) = \alpha$ for all $i$. Hence, since $j$ is 
finitely partitive on $\alpha$, there is a bound on the length of such chains. Therefore, $R$ is a BF-ring.
\end{proof}

To give the first application of Lemma \ref{easyandimportant}, we need to recall some homological terminology. For more details, see \cite{Lev}, \cite{C} or \cite{Bj}, for example.

\begin{definition}\label{ausgor} Let $R$ be a noetherian ring.
\begin{enumerate}
\item[(1)] Let $M$ be a left or right $R$-module.  The \defit{(homological) grade} of $M$ is
$$ j(M) \; := \mathrm{inf}\{ i \,| \,\mathrm{Ext}^i_R (M,R) \neq 0 \} \; \in \; \mathbb{Z}_{\geq 0} \cup \{\omega\}.$$
In particular, $j(0) = \omega$, where $0$ denotes the zero $R$-module. 
\item[(2)] $R$ satisfies the \defit{Auslander condition} if, for every left or right $R$-module $M$ and every non-negative integer $i$, $j(N) \geq i$ for every submodule $N$ of $\mathrm{Ext}^i_R(M,R)$.
\item[(3)] $R$ is \defit{Auslander-Gorenstein} if
\begin{itemize}
\item[(i)] $R$ has finite (and equal) right and left injective dimensions;
\item[(ii)] $R$ satisfies the Auslander condition.
\end{itemize}
\item[(4)] $R$ is \defit{Auslander-regular} if it is Auslander-Gorenstein and has finite global (homological) dimension.
\end{enumerate}
\end{definition}

The fact that the (negative of the) grade yields an exact finitely partitive dimension function for Auslander-Gorenstein rings was first observed by Björk \cite[Theorem 1.17]{Bj}; a detailed account in this setting was given in \cite[Theorem 4.2, (4.6.5), (4.6.7)]{Lev}\footnote{Note that the definition of ``finitely partitive'' given in \cite[Proposition 4.5(iv)]{Lev} is weaker than Definition \ref{AusGor}; but Levasseur observes in \cite[(4.6.5)]{Lev} that the stronger conclusion is valid.}.

\begin{corollary}\label{AG} Every Auslander-Gorenstein ring is a BF-ring.
\end{corollary}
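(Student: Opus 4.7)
The plan is to invoke Lemma \ref{easyandimportant} with $j$ the homological grade function from Definition \ref{ausgor}(1) and with $\alpha = 1$. Two ingredients are needed: first, that $j$ is finitely partitive on $1$ (in fact on every ordinal) for Auslander-Gorenstein rings; and second, that $j(R/xR) = 1$ whenever $x \in R^\bullet \setminus R^\times$.

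For the first ingredient, I would simply cite Björk and Levasseur: the results quoted in the paragraph preceding the corollary (\cite[Theorem 1.17]{Bj} and \cite[Theorem 4.2, (4.6.5), (4.6.7)]{Lev}) state precisely that on the category of finitely generated modules over an Auslander-Gorenstein ring, $j$ is an exact, finitely partitive dimension function. This requires no further work.

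For the second ingredient, I would start from the short exact sequence of right $R$-modules
\[
  0 \longrightarrow R \xrightarrow{\;\cdot x\;} R \longrightarrow R/xR \longrightarrow 0,
\]
where $\cdot x$ denotes left multiplication by $x$, which is a right $R$-module homomorphism with kernel $\{r \in R : xr = 0\} = 0$ because $x \in R^\bullet$. Applying $\mathrm{Hom}_R(-,R)$ yields the long exact sequence
\[
  0 \longrightarrow \mathrm{Hom}_R(R/xR, R) \longrightarrow R \xrightarrow{\;x\cdot\;} R \longrightarrow \mathrm{Ext}^1_R(R/xR, R) \longrightarrow 0,
\]
using that $\mathrm{Ext}^i_R(R,R) = 0$ for $i \ge 1$. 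Since $x$ is a non-zero-divisor, $\mathrm{Hom}_R(R/xR, R) = 0$, giving $j(R/xR) \ge 1$. On the other hand, the cokernel of $x\cdot$ is $R/Rx$, which is nonzero because $x$ is not a unit (in a unit-cancellative ring a one-sided inverse is a two-sided inverse; and in a noetherian ring right invertibility implies invertibility), and this cokernel is $\mathrm{Ext}^1_R(R/xR, R)$. Hence $j(R/xR) = 1$.

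With both ingredients in hand, Lemma \ref{easyandimportant} applied with $\alpha = 1$ shows that $R$ is a BF-ring. I do not expect any substantive obstacle: the computation of $\mathrm{Ext}^1$ above is standard, and the finitely partitive property is essentially the reason that grade functions were introduced in the noncommutative homological algebra of Auslander-Gorenstein rings. The only small point worth stating explicitly is that a noetherian ring is unit-cancellative with respect to right multiplication (so that $x \notin R^\times$ forces $R/Rx \ne 0$), which is where the noetherian hypothesis implicit in the definition of Auslander-Gorenstein is used.
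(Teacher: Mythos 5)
Your proposal is correct and follows essentially the same route as the paper: both verify $j(R/xR)=1$ for $x \in R^{\bullet}\setminus R^{\times}$ (the paper deduces $\Ext^1_R(R/xR,R)\neq 0$ from the non-splitness of $0 \to xR \to R \to R/xR \to 0$, while you identify it explicitly as $R/Rx \neq 0$ via the long exact sequence, which is the same computation), cite Bj\"ork--Levasseur for the finitely partitive property of the grade, and conclude by Lemma \ref{easyandimportant} with $\alpha = 1$.
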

\begin{proof}
Let $R$ be an Auslander-Gorenstein ring  and let $x \in R^{\bullet} \setminus R^{\times}$. Then $\mathrm{Hom}_R (R/ xR, R) = 0 $. Moreover, since $ 0 \to xR \to R \to R /xR \to 0$ is a non-split exact sequence as $xR \cong R$, it follows that $\mathrm{Ext}_R ^1 (R/xR, R) \neq 0$, so that $j(R/xR) = 1$. However, as noted before the corollary, when $R$ is Auslander-Gorenstein the homological grade $j$ is finitely partitive for every ordinal, and in particular for $1$. Therefore $R$ is a BF-ring by Lemma \ref{easyandimportant}. 
\end{proof}

We list here some large and important classes of noetherian rings which are known to be Auslander-Gorenstein.

\begin{examples}\label{8.6}\begin{enumerate}
\item[(1)] At the time of writing, all known noetherian Hopf algebras are Auslander-Gorenstein. Whether this is in fact a theorem has been an open question for 25 years, see \cite[1.15]{BRGO97}. Many large classes of noetherian Hopf algebras are known to be Auslander-Gorenstein. These include:
\begin{itemize}
\item[$(\bullet)$] noetherian Hopf algebras satisfying a polynomial identity \cite[Theorem 0.1]{WZ};
\item[$(\bullet)$] group algebras of polycyclic-by-finite groups \cite[Theorem 6.7]{BRZH08};
\item[$(\bullet)$] enveloping algebras of finite dimensional Lie algebras \cite{E};
\item[$(\bullet)$] quantised enveloping algebras \cite[Proposition 2.2]{BRGO97};
\item[$(\bullet)$] connected Hopf algebras of finite Gelfand-Kirillov dimension \cite{Zh};
\item[$(\bullet)$] quantised coordinate rings of semisimple groups \cite[Theorem 0.1]{GZ}.
\end{itemize}
\item[(2)] A commutative noetherian ring is Auslander-Gorenstein if and only if it is Gorenstein; that is, if and only if it has finite injective dimension \cite{Ba}. 
\item[(3)] A ring with a locally finite $\mathbb{N}-$filtration whose associated graded ring is commutative Gorenstein is Auslander-Gorenstein. In particular, the Weyl algebras $A_n(k)$ over a field $k$ are Auslander-regular of global dimension $n$ in characteristic 0, and $2n$ in characteristic $p > 0$ \cite{E}.
\item[(4)] A local\footnote{By a \defit{local} ring we mean a ring whose factor by its Jacobson radical is simple artinian.} fully bounded noetherian ring of finite global dimension is Auslander-regular \cite {Te97}. 
\item[(5)]  Sklyanin algebras are Auslander-Gorenstein domains \cite{TaBe96}.
\end{enumerate}
\end{examples}

\bigskip


\subsection{Dualizing complexes}\label{dualizing} In this subsection we give two further applications of Lemma \ref{easyandimportant}. First, we strengthen the homological technology used to define the function $j$ in Corollary \ref{AG} so that a much larger class of noetherian rings are included than the Auslander-Gorenstein rings considered in $\S$\ref{AusGor}; the outcome is Theorem \ref{gradejump}. A cost of this (apart from the weight of equipment required) is that we have to work with algebras over a field, rather than rings.

In the rest of this section $k$ will denote a field, and all unadorned tensor products are assumed to be over $k$. The opposite ring of a $k$-algebra $R$ is
 denoted by $R^{\circ}$, and $R^e$ denotes the $k$-algebra $R\otimes R^{\circ}$, so that $\mathrm{Mod}(R^e)$ is the category of $R-R$-bimodules on which $k$ operates centrally. Given an algebra $R$, $\mathrm{D}(\mathrm{Mod}(R))$ (resp. $\mathrm{D}^b(\mathrm{Mod}(R))$ will denote the derived category (resp. the bounded derived category) of right $R$-modules; for details, see for example \cite {Ye20}.

The following definition, which is enough for our purposes here, is a special case of the more general version, for two possibly distinct algebras, given in \cite[Definition 1.1]{YZ}.

\begin{definition}\label{dual} Let $R$ be a noetherian $k$-algebra. A complex $\mathcal{R} \in \mathrm{D}^b(\mathrm{Mod}( R\otimes R^{\circ}))$ is called a
\defit{dualizing complex} over $R$ if it satisfies the three conditions below:
\begin{itemize}
\item[(i)] $ \mathcal{R}$ has finite injective dimension over $R$ and $R^{\circ}$;
\item[(ii)] $\mathcal{R}$ has finitely generated cohomology modules over $R$ and $R^{\circ}$;
\item[(iii)] The canonical morphisms $R^{\circ}\longrightarrow \mathrm{RHom}_{R}(\mathcal{R},\mathcal{R})$ and $R\longrightarrow \mathrm{RHom}_{R^{\circ}}(\mathcal{R},\mathcal{R})$ in $\mathrm{D}(\mathrm{Mod} (R^e))$ are isomorphisms.
\end{itemize}
\end{definition}

To say that the dualizing complex $ \mathcal{R}$ for the $k$-algebra $R$ satisfies the Auslander property simply means that the usual definition of the Auslander-Gorenstein condition - Definition \ref{ausgor}(3) - holds when the ring $R$ is replaced by $\mathcal{R}$. More precisely, we have the following definitions.

\begin{definition}\label{Ausdual}(Yekutieli, Zhang, \cite[Definitions 2.1,2.2]{YZ}) Let $\mathcal{R}$ be a dualizing complex for the noetherian $k$-algebra $R$.
\begin{enumerate}
\item[(1)] Let $M$ be a
finitely generated $R$-module. The \defit{grade} of $M$ with respect to $\mathcal{R}$ is
$$ j_{\mathcal{R};R}(M) \; := \; \mathrm{inf}\{j : \mathrm{Ext}^j_R(M,\mathcal{R}) \neq 0\} \in \mathbb{Z} \cup \{\infty\}.$$
A similar definition gives the grade $j_{\mathcal{R};R^{\circ}}(M')$ of an $R^{\circ}$-module $M'$.

\item[(2)] $\mathcal{R}$ is an \defit{Auslander dualizing complex} for $R$ if
\begin{itemize}
\item[(i)] for every finitely generated $R$-module $M$ and integer $q$, and for every nonzero finitely generated $R^{\circ}$-submodule $N$ of $\mathrm{Ext}^q_R(M,\mathcal{R})$,
$$ j_{\mathcal{R}; R^{\circ}}(N) \geq q;$$
\item[(ii)] the corresponding condition holds for finitely generated $R^{\circ}$-modules $M'$.
\end{itemize}
\end{enumerate}
\end{definition}

In the remainder of this section $R$ will always be used to denote an algebra over the field $k$ with Auslander dualizing complex $\mathcal{R}$. For convenience and to align our notation as far as possible with the first part of $\S$\ref{hom}, we will denote the maps $j_{\mathcal{R};R}$  and $ j_{\mathcal{R}; R^{\circ}}$ defined in Definition \ref{Ausdual} respectively by $j$ and $j^{\circ}$.
A nonzero right $R$-module $M$ will be called $j-$\defit{pure} if $j(N) = j(M)$ for all nonzero submodules $N$ of $M$; and similarly we may refer to $j^{\circ}-$pure left modules. Without loss of generality, 
because  $\mathcal{R}$ is of finite injective dimension, we can suppose that the grade is always non-negative.
 For consistency of notations throughout $\S$\ref{hom}, whenever $j(M) = \infty$ for an $R$-module $M$ we will write $j(M) = \omega$, and similarly for $j^{\circ}(M)$.
 
The link between the ideas of the first part of $\S$\ref{hom} and the present discussion is made clear by the following key result. 

\begin{theorem}\label{gradedim}({\rm Yekutieli-Zhang}, \cite[ Definition 2.4, Definition  2.9 and Theorem  2.10]{YZ}) Let $R$ be a noetherian $k$-algebra with an Auslander dualizing complex $\mathcal{R}$. Then $-j$ and $-j^{\circ}$ are finitely partitive exact dimension functions.
\end{theorem}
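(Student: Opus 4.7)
The proof splits naturally into two parts: \emph{exactness} of $-j$ on short exact sequences, and \emph{finite partitivity} at each grade $q$. Both statements transfer to $-j^{\circ}$ by the symmetric argument, so I focus on $j$. Throughout I write $E^{i}(-) \coloneqq \Ext^{i}_{R}(-,\mathcal{R})$; both parts rely on the Auslander condition of \cref{Ausdual} together with the biduality afforded by the dualizing complex $\mathcal{R}$.

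For exactness, I would apply $\mathrm{RHom}_{R}(-,\mathcal{R})$ to a short exact sequence $0 \to N \to M \to L \to 0$ and work with the long exact sequence
\[
\cdots \to E^{i-1}(N) \to E^{i}(L) \to E^{i}(M) \to E^{i}(N) \to E^{i+1}(L) \to \cdots .
\]
The inequality $j(M) \geq \min(j(N),j(L))$ is routine from vanishing of the outer terms in degrees below this minimum. The reverse inequality is the substantive point: assuming $j(N) = i_{0} \leq j(L)$, one must show that the connecting map $\partial\colon E^{i_{0}}(N) \to E^{i_{0}+1}(L)$ has nontrivial kernel. The Auslander condition applied to $L$ forces every nonzero $R^{\circ}$-submodule of $E^{i_{0}+1}(L)$ to have $j^{\circ}$-grade $\geq i_{0}+1$, whereas the same condition applied to $N$ together with biduality produces a submodule of $E^{i_{0}}(N)$ of $j^{\circ}$-grade exactly $i_{0}$. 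Hence $\ker\partial \neq 0$, $E^{i_{0}}(M) \neq 0$, and $j(M) = i_{0}$.

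For finite partitivity at grade $q$, the plan is to pass through the canonical module functor $E^{q}$ on the subcategory of modules of grade $\geq q$. Biduality combined with the Auslander condition shows that whenever $j(M) = q$, the $R^{\circ}$-module $E^{q}(M)$ is finitely generated and $j^{\circ}$-pure of grade $q$. For a short exact sequence $0 \to N \to M \to L \to 0$ in which all three terms have grade $q$, the fragment $0 \to E^{q}(L) \to E^{q}(M) \to E^{q}(N) \to E^{q+1}(L)$ is in fact short exact: the image of the rightmost map is a submodule of $E^{q+1}(L)$ and therefore has $j^{\circ}$-grade $\geq q+1$, but it is simultaneously a quotient of the $j^{\circ}$-pure grade-$q$ module $E^{q}(N)$, forcing it to vanish. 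Iterating $E^{q}$ on a chain $0 = M_{0} \subsetneq \cdots \subsetneq M_{n} = M$ with grade-$q$ successive quotients, one obtains a filtration of $E^{q}(M)$ whose $n$ successive factors $E^{q}(M_{i}/M_{i-1})$ are all nonzero.

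The main obstacle is to bound $n$ in terms of data intrinsic to $E^{q}(M)$, since $E^{q}(M)$ need not have finite length over $R^{\circ}$. To overcome this, I would equip the category of $j^{\circ}$-pure $R^{\circ}$-modules of grade $q$ with a finite, strictly additive multiplicity function --- for instance the length at a minimal associated prime, or a Goldie-type rank of $E^{q}(M)$ modulo its grade-$(q+1)$ torsion. Induction on $q$ together with the biduality spectral sequence $\Ext^{p}_{R^{\circ}}(\Ext^{q'}_{R}(M,\mathcal{R}),\mathcal{R}) \Rightarrow M$ shows that this multiplicity is well defined, finite, and additive on the sequences produced above; the resulting bound on $n$ then closes the argument. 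This final step is where the derived-category machinery of Yekutieli and Zhang is essential, as an elementary approach using only the long exact sequence of $\Ext$ does not suffice.
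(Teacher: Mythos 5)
First, a point of reference: the paper does not prove this statement at all --- it is quoted verbatim from Yekutieli--Zhang \cite{YZ} --- so your proposal has to be measured against their proof. Your exactness half is essentially sound: the only substantive input is that $j^{\circ}\bigl(\Ext^{j(N)}_R(N,\mathcal{R})\bigr)=j(N)$ for $N\neq 0$, which does follow from the biduality spectral sequence together with the Auslander condition, and the case where the minimum is attained by the quotient $L$ is the routine embedding $E^{i_0}(L)\hookrightarrow E^{i_0}(M)$. The finite-partitivity half, however, contains a genuine error. You claim that for a short exact sequence $0\to N\to M\to L\to 0$ of grade-$q$ modules the sequence $0\to E^{q}(L)\to E^{q}(M)\to E^{q}(N)\to 0$ is exact because the image of $E^{q}(N)\to E^{q+1}(L)$ is a quotient of the $j^{\circ}$-pure grade-$q$ module $E^{q}(N)$ and has grade $\ge q+1$, ``forcing it to vanish''. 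Purity constrains \emph{submodules}, not quotients, and the claim is false: over $R=k[x,y]$ (Auslander regular, $\mathcal{R}=R$) take $M=R/(x^{2})$, $L=R/(x^{2},xy)$, $N=(x^{2},xy)/(x^{2})\cong R/(x)$; all three have grade $1$, $E^{2}(M)=0$ while $E^{2}(L)\cong k$, so the connecting map $E^{1}(N)\to E^{2}(L)$ is surjective and nonzero, i.e.\ its image is a nonzero grade-$2$ quotient of the pure grade-$1$ module $E^{1}(N)\cong R/(x)$. Consequently the asserted filtration of $E^{q}(M)$ with factors $E^{q}(M_{i}/M_{i-1})$ does not exist in general.

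Even if one repairs this step (the standard device is to use instead the kernels $K_{i}=\ker\bigl(E^{q}(M)\to E^{q}(M_{i})\bigr)$, which the Auslander condition shows form a strictly monotone chain whenever the factors $M_{i+1}/M_{i}$ have grade exactly $q$), noetherianity of $E^{q}(M)$ only shows that each individual chain terminates; the definition of ``finitely partitive'' used in the paper (Notation \ref{notation}(3)) and needed for Lemma \ref{easyandimportant} requires a \emph{uniform} bound on the lengths of all such chains. Producing that bound is exactly the point at which your sketch defers to an unspecified ``finite, strictly additive multiplicity'': a ``length at a minimal associated prime'' has no meaning for noncommutative $R^{\circ}$-modules, and the finiteness and additivity of a Goldie-type rank on grade-$q$ modules modulo grade-$(q+1)$ torsion is precisely the content of Yekutieli--Zhang's duality between the quotient categories of grade-$q$ modules modulo grade-$(q+1)$ modules on the two sides, which is what makes these quotient categories have finite-length objects. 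In other words, the hardest part of the theorem is assumed rather than proved, so as it stands the proposal does not establish finite partitivity.
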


We will follow Yekutieli and Zhang \cite[Definition 2.9]{YZ} in calling $-j$ and $-j^{\circ}$ the \defit{canonical dimension functions} associated to $\mathcal{R}$.
It is important to note that the dimension function $-j$ can take its values over not necessarily finitely generated modules too as
  explained after Notation \ref{notation}. 

As is well known, the apparatus of critical modules and related technology can be invoked once one has available a finitely partitive exact dimension function. Summarising briefly for the present context, a nonzero finitely generated right $R$-module $M$ is $j$-\defit{critical} if every proper quotient of $M$ has $j$-dimension strictly bigger than
$j (M)$. Given a finitely generated right $R$-module $M$, a $j$-\defit{critical composition series} for $M$ is a finite chain
$$ 0 = M_0 \subseteq M_1 \subseteq  \cdots \subseteq M_t = M$$
of submodules $M_i$ of $M$, with the subfactors $M_i/M_{i-1}$ $j$-critical for $i = 1, \ldots , t$. We say that critical $R$-modules $C$ and $D$ are \defit{similar} if there is a nonzero $R$-module $X$ which embeds in each of them; equivalently, since critical modules are uniform, $C$ and $D$ are similar if and only if they have isomorphic injective hulls. One then has:

\begin{theorem}\label{ccs} Let $R$ be a noetherian $k$-algebra with an Auslander dualizing complex $\mathcal{R}$.
\begin{enumerate}
\item[(1)] (\cite[Corollary 2.17]{YZ}) Every finitely generated right $R$-module $M$ has a $j$-critical composition series.\\
\item[(2)] Suppose that $ 0 = M_0 \subseteq  M_1 \subseteq \cdots \subseteq M_t = M$ and $ 0 = N_0 \subseteq  N_1 \subseteq \cdots \subseteq N_s = M$ are two $j$-critical composition series for $M$. Then $s= t$ and there is a permutation $\sigma$ of $\{1, \ldots, t \}$ such that $M_i/M_{i-1}$ is similar to $N_{\sigma(i)}/N_{\sigma(i)-1}$ for all $i = 1, \ldots , t$.
\end{enumerate}
\end{theorem}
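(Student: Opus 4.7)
The plan is to mirror the classical Jordan-H\"older argument using the Schreier-Zassenhaus refinement lemma, adapted to $j$-critical composition series relative to the canonical dimension $-j$. The essential tools are Theorem \ref{gradedim} (exactness and finite partitivity of $-j$) together with the fact, recalled before the statement, that critical modules are uniform, so that similarity of critical modules coincides with isomorphism of their (indecomposable) injective hulls.

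I would begin by establishing a Zassenhaus-type lemma in this setting: given submodules $A \subseteq B$ and $A' \subseteq B'$ of $M$, the classical isomorphism
\[
  \frac{A + (B \cap B')}{A + (B \cap A')} \;\cong\; \frac{A' + (B \cap B')}{A' + (A \cap B')}
\]
holds in any abelian category. When $B/A$ and $B'/A'$ are $j$-critical, each side is a subquotient of a critical module; exactness of $-j$ allows detection of whether the subquotient is zero, and uniformity of critical modules (together with similarity equalling isomorphism of injective hulls) should then match nonzero subfactors to a common similarity class.

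Applying this pairwise to the two chains, I would construct matched refinements of both series whose corresponding Zassenhaus subfactors are pairwise isomorphic. Finite partitivity of $-j$ bounds the number of nontrivial refined subfactors that can arise between consecutive terms, and induction on $t$ then yields $s = t$ and the required permutation $\sigma$. The inductive step splits into cases depending on whether $M_{t-1}$ and $N_{s-1}$ coincide, are nested, or are incomparable, with the Zassenhaus lemma handling the incomparable case and the other two cases reducing directly to smaller instances.

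The main obstacle is that a critical module may admit proper subquotients belonging to distinct similarity classes -- for instance, the socle $R/(x,y) \cong k$ of the critical $R$-module $R/(x) = k[y]$, where $R = k[x,y]$, is not similar to $k[y]$, since their injective hulls correspond to distinct maximal ideals. Consequently, the Zassenhaus pairing must be restricted to those refined subfactors which inherit the similarity class of the ``parent'' critical factor, and one needs to verify that exactly one such subfactor arises from each $M_i/M_{i-1}$ when the refinement is carried out against the other chain. This matching, which relies on the interplay between exactness, uniformity, and finite partitivity of $-j$, is the delicate bookkeeping at the heart of the argument.
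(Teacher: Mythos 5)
Your proposal only engages with part (2), and there it stops exactly where the real difficulty sits. The Schreier--Zassenhaus lemma does give isomorphisms between the refined subfactors of the two chains, but because the factors $M_i/M_{i-1}$ are critical rather than simple, each block acquires several nonzero refined subfactors, of which only one --- the lowest nonzero one, which is a \emph{submodule} of $M_i/M_{i-1}$ --- is similar to $M_i/M_{i-1}$; every other nonzero subfactor is a submodule of a proper quotient of $M_i/M_{i-1}$ and hence has strictly larger grade. (Your illustrative example is slightly off: $k \cong k[y]/(y)$ is a proper quotient of $k[y]$, whose socle is zero, but the phenomenon you point to is genuine.) What your sketch never establishes is that, under the Zassenhaus isomorphisms, the distinguished (similar-to-parent) subfactor of the $i$-th block of the first refinement corresponds to the distinguished subfactor of some block of the second refinement, and that the resulting assignment $i \mapsto j$ is a bijection. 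This is not automatic: setting $j(i) = \min\{\, j : M_i \cap N_j \not\subseteq M_{i-1} \,\}$ and $i(j) = \min\{\, i : N_j \cap M_i \not\subseteq N_{j-1} \,\}$, the relation $M_i \cap N_{j(i)-1} \subseteq M_{i-1}$ does not formally yield $N_{j(i)} \cap M_{i-1} \subseteq N_{j(i)-1}$, so the candidate pairing need not be reciprocal, and a priori the distinguished subfactor of an $M$-block can correspond to a non-distinguished (lower-dimensional) subfactor of its $N$-block. Your appeal to finite partitivity and ``delicate bookkeeping'' names this problem rather than solving it, so the central step of (2) is missing; also, exactness of $-j$ does not by itself ``detect'' vanishing of subquotients, and the finiteness of the refinement is trivial (at most $st$ subfactors) and does not need partitivity.

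The paper proceeds differently and avoids refinements altogether: for (2) it follows the standard induction-on-length argument for Krull-critical composition series in \cite[Proposition 6.2.21]{mcconnell-robson01}, comparing $M_{t-1}$ and $N_{s-1}$ directly and using that every nonzero submodule of a $j$-critical module is again $j$-critical of the same grade and similar to it (a consequence of the exactness of $-j$ from Theorem \ref{gradedim}). For (1), which your proposal does not discuss beyond the citation in the statement, the paper adapts the existence argument of \cite[Theorem 15.9]{goodearl-warfield04}, the key point being that finite partitivity ensures every nonzero module contains a $j$-critical submodule. If you wish to rescue a refinement-style proof, you must actually prove the matching lemma described above; the McConnell--Robson induction sidesteps it entirely.
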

\begin{proof}(1) The argument given in \cite[Theorem 15.9]{goodearl-warfield04} for Krull dimension works also for $j$. One simply has to note that 
the finitely partitive property of $j$ guaranteed by Theorem \ref{gradedim}(1) ensures that every nonzero module $X$ contains a $j$-critical submodule $Y$. So, we fix $\alpha$ to be the least canonical dimension occurring amongst nonzero submodules of $M$, and let $M_1$ be maximal amongst $\alpha$-critical submodules of $M$. Then, repeat the procedure with $M/M_1$. 

(2) This can be proved by standard methods - for example, one may follow the argument for Krull-critical composition series given in \cite[Proposition 6.2.21]{mcconnell-robson01}.
\end{proof}

An easily overlooked but nevertheless fundamental point concerning the canonical dimension is that it always takes finite values on nonzero finitely generated modules:

\begin{proposition}\label{finite} Let $R$ and $\mathcal{R}$ be as in Theorem \ref{ccs}, and let $M$ be a nonzero finitely generated $R$-module. Then $j(M) \in \mathbb{Z}$.
\end{proposition}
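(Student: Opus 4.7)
My plan is to observe that $j(M) \notin \mathbb{Z}$ occurs precisely when $\mathrm{Ext}^j_R(M, \mathcal{R}) = 0$ for every integer $j$, in which case $j(M) = \omega$ by convention. The proposition therefore reduces to showing that the complex $D(M) := \mathrm{RHom}_R(M, \mathcal{R})$ is nonzero in $\mathrm{D}(\mathrm{Mod}\, R^\circ)$ whenever $M$ is a nonzero finitely generated $R$-module. Finiteness from above comes for free: Definition~\ref{dual}(i) supplies an integer $d \geq 0$ (depending only on $\mathcal{R}$) with $\mathrm{Ext}^j_R(-, \mathcal{R}) = 0$ for $j > d$, so as soon as some $\mathrm{Ext}^j_R(M, \mathcal{R})$ is nonzero we obtain $0 \leq j(M) \leq d$.

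To prove the nonvanishing of $D(M)$, I would invoke the biduality formalism built into the definition of a dualizing complex. Write $D^\circ := \mathrm{RHom}_{R^\circ}(-, \mathcal{R})$. Condition (iii) of Definition~\ref{dual} says precisely that the canonical maps $R \to D^\circ D(R)$ and $R^\circ \to D D^\circ(R^\circ)$ are isomorphisms in $\mathrm{D}(\mathrm{Mod}\, R^e)$. A standard argument from the Yekutieli--Zhang theory (see \cite[Proposition~1.3]{YZ}) promotes these to a natural isomorphism $N \cong D^\circ D(N)$ on the full subcategory of $\mathrm{D}^b(\mathrm{Mod}\, R)$ whose cohomology is finitely generated: one passes from $R$ to finitely generated free modules by additivity, then to arbitrary finitely generated $R$-modules by means of a bounded-above free resolution, using Definition~\ref{dual}(i),(ii) to keep all relevant Ext complexes in the finite bounded setting.

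Applied to $M$, regarded as a complex concentrated in degree zero, this yields $D^\circ D(M) \cong M \ne 0$, which is possible only if $D(M) \not\cong 0$ in the derived category. Consequently some $\mathrm{Ext}^j_R(M, \mathcal{R})$ is nonzero, and combined with the upper bound this places $j(M)$ in the finite interval $\{0, 1, \ldots, d\} \subseteq \mathbb{Z}$. The only genuine obstacle is the extension of biduality from $R$ itself to all finitely generated $M$; this is classical in the dualizing-complex formalism, but it genuinely relies on both the finite injective dimension and the finite-cohomology hypotheses of Definition~\ref{dual}, so those axioms are essential here and not mere bookkeeping.
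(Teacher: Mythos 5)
Your proof is correct, but it takes a genuinely different route from the paper. You derive nonvanishing of $\mathrm{RHom}_R(M,\mathcal{R})$ from the biduality $M \cong D^\circ D(M)$ on complexes with bounded, finitely generated cohomology --- the standard consequence of Definition~\ref{dual}(i)--(iii) established in the Yekutieli--Zhang formalism (their duality theorem, which is indeed proved by exactly the way-out induction you sketch: true for $R$, hence for finite free modules, hence for all of $\mathrm{D}^b_f$ using finite injective dimension to truncate resolutions). The paper argues differently: assuming $j(M)=\omega$, it uses the finite partitivity of the canonical dimension (Theorem~\ref{gradedim}) to replace $M$ by a $j$-critical submodule, and then contradicts the fact that every uniform injective module occurs in a minimal injective resolution of $\mathcal{R}$ (\cite[Theorem 1.11(2)]{YZ}, going back to \cite{ASZ}). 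Both routes outsource the real content to \cite{YZ}, but they lean on different parts of it. Your argument has the advantage of not using the Auslander condition at all --- it works for any dualizing complex, whereas the paper's appeal to finite partitivity goes through the Auslander property --- and it also gives the explicit upper bound $j(M)\le d$ with $d$ the injective dimension of $\mathcal{R}$ for free. The paper's argument, on the other hand, stays entirely within the module-theoretic apparatus (critical composition series, purity) already set up in that section and avoids invoking the derived-category duality machinery. The one point to phrase carefully in your write-up is the lower bound: $j(M)\in\mathbb{Z}$ also implicitly requires that the inf is not $-\infty$, which follows since $\mathcal{R}$ is a bounded complex of finite injective dimension (the paper normalizes so that grades are non-negative), and your interval $\{0,\dots,d\}$ tacitly uses that normalization; this is cosmetic, not a gap.
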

\begin{proof} Suppose that $M$ is a nonzero finitely generated $R$-module with $j(M) = \omega$.
 Since $j$  is finitely partitive by Theorem \ref{gradedim}, we can replace $M$ by a submodule if necessary and so assume that $M$ 
 is $j$-critical with dimension $\omega$. But this contradicts the fact that every uniform injective $R$-module appears in any minimal injective resolution of $\mathcal{R}$, by \cite[Theorem 1.11(2)]{YZ} (whose proof is taken from \cite[Theorem 2.3]{ASZ}).
\end{proof}

To generalise the proof of the BF property given for Auslander-Gorenstein rings in Corollary \ref{AG} to the broader setting of algebras having an Auslander dualizing complex we need to be able to precisely control the increase in the value of the grade when passing from a finitely generated module $M$ to $M/\psi(M)$ for certain finitely generated $j-$pure modules $M$ and module monomorphisms $\psi$. Note that this was possible for rather trivial reasons in the proof of Corollary \ref{AG}, for the crucial case there of $R = M$. In the present setting an inequality in one direction follows easily, as we show in Lemma \ref{grog}. First we note in Lemma \ref{drop} that Lemma \ref{grog} applies in the key case where $M = R/P$ for a prime ideal $P$ of $R$.

\begin{lemma}\label{drop}  Let $R$ be a noetherian algebra with an Auslander dualizing complex $\mathcal{R}$ and let $P$ be a prime ideal of $R$. Then $R/P$ is a $j$-pure and $j^{\circ}$-pure $R$-module.
\end{lemma}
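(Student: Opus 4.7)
The plan is to write $S := R/P$. Since $R$ is noetherian and $P$ is prime, $S$ is a prime noetherian ring, hence left and right Goldie, and by Goldie's theorem its classical ring of quotients $Q := Q(S)$ is simple artinian. By the symmetry of \cref{Ausdual}, it suffices to prove $j$-purity of $S$ as a right $R$-module; the argument for $j^\circ$-purity on the left is identical with the roles of $R$ and $R^\circ$ interchanged. So let $N$ be a nonzero right $R$-submodule of $S$; since $P$ annihilates $S$, the module $N$ is in fact a nonzero right ideal of $S$, and the goal is to show $j(N) = j(S)$.

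One inequality, $j(N) \ge j(S)$, is immediate from the exactness of $j$ (\cref{gradedim}) applied to the inclusion $N \hookrightarrow S$. The crux of the lemma is the reverse inequality, and my strategy is to produce, for each such $N$, an embedding of right $S$-modules $S \hookrightarrow N^n$ for some $n \ge 1$. Granting this, iterating exactness on the split short exact sequences inside $N^n$ yields $j(N^n) = j(N)$, and exactness applied to $S \hookrightarrow N^n$ gives $j(S) \ge j(N^n) = j(N)$, completing the proof.

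To construct the embedding I would work first inside $Q$. The right ideal $NQ$ of $Q$ is nonzero, and since $Q$ is simple artinian every nonzero right ideal of $Q$ is a generator of the category of right $Q$-modules, so there is an embedding $\phi: Q \hookrightarrow (NQ)^n$ of right $Q$-modules for some $n \ge 1$. Restricting $\phi$ to $S$ produces a right $S$-linear map into $(NQ)^n$, whose image only lies in $N^n$ after clearing denominators. Writing $\phi(1) = (n_k c_k^{-1})_{k=1}^n$ in right-fraction form with $n_k \in N$ and $c_k \in S^\bullet$, and using that any finite intersection of essential right ideals of the prime Goldie ring $S$ remains essential and hence contains a non-zero-divisor, one finds a common denominator $c \in S^\bullet \cap \bigcap_k c_k S$. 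Rewriting $\phi(1) = (n'_k c^{-1})_k$ with $n'_k \in N$, the composite $S \xrightarrow{\sim} cS \hookrightarrow N^n$, $s \mapsto cs \mapsto (n'_k s)_k$, is then an injection: the second map is $\phi$ restricted to $cS$, which by construction lands in $N^n$, and it is injective because $\phi$ is and $c$ is regular. The main, if largely routine, obstacle is this denominator-clearing step; the rest of the argument is conceptual bookkeeping on the exact dimension function $j$.
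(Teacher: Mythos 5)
Your argument is correct, and it proves the lemma; it shares the same skeleton as the paper's proof — embed $R/P$ into finitely many copies of the given nonzero right ideal and then let exactness of $j$ (Theorem \ref{gradedim}) finish — but the Goldie-theoretic input is packaged differently. The paper argues by contradiction: it shrinks an offending right ideal $T$ with $j(T)>j(R/P)$ to a uniform one, invokes the structural fact that some essential right ideal $B$ of $R/P$ is isomorphic to $T^{\oplus t}$ (with $t$ the uniform dimension), and then uses Goldie's lemma that the essential ideal $B$ contains a regular element $d+P$ to plant a copy of $R/P$ inside $B$, so that $j(B)\leq j(R/P)$ contradicts $j(B)=j(T)>j(R/P)$. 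You instead construct the embedding $R/P\hookrightarrow N^{n}$ directly for an arbitrary nonzero right ideal $N$: pass to the simple artinian quotient ring $Q(R/P)$ given by Goldie's theorem, use that the nonzero right ideal $NQ$ is a generator (equivalently, semisimplicity with a unique simple) to get $Q\hookrightarrow (NQ)^{n}$, and clear denominators via a regular element of $\bigcap_k c_kS$, which is essential as a finite intersection of essential right ideals; then $j(N)\geq j(R/P)$ from $N\subseteq R/P$ and $j(R/P)\geq j(N^{n})=j(N)$ from the embedding and exactness. Your route avoids the reduction to uniform ideals and the citation of the specific result on essential right ideals, at the cost of the (routine) common-denominator bookkeeping — the only step left implicit is that every element of $NQ$ already has the form $nc^{-1}$ with $n\in N$ and $c$ regular, which again is the standard common-denominator fact. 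Both proofs ultimately rest on Goldie theory for the prime noetherian ring $R/P$ together with exactness of the canonical dimension.
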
 
\begin{proof} We prove the result for $j$, the argument for left modules being identical. Suppose the result is false, and let $T$ be a nonzero right ideal of $R/P$ with 
\begin{equation}\label{omega} j (T) >  j (R/P).
\end{equation} 
Replacing $T$ by  a smaller right  ideal if necessary, we may assume that $T$ is uniform. By \cite[Proposition 7.24]{goodearl-warfield04},  there exists an essential right 
ideal $B$ of $A/P$ such that 
$$ B \; \cong \;  \oplus_{i = 1}^{t} T,$$
where $t$ is the uniform dimension of $R/P$. Since $j$ is exact by Theorem \ref{gradedim}, (\ref{omega}) implies that
\begin{equation}\label{first} j(B) \; = \; j(T) \; >  \; j ((R/P) _R).
\end{equation}
But, by Goldie's key lemma, \cite[Proposition 6.13]{goodearl-warfield04}, $B$ contains a regular element $d + P$ of $R/P$. Since $R/P \cong dR + P/P \subseteq B$ we deduce that 
\begin{equation}\label{second}j (B) \; \leq \; j ((R/P) _R).
\end{equation}
(\ref{first}) and (\ref{second}) yield a contradiction, so no such right ideal $T$ of $R/P$ can exist. The argument on the left is the same.
\end{proof}

Recall that a (right, say) module $M$ over a noetherian ring $S$ is called a \defit{torsion} $S$-module if for each $m \in M$ there exists $c \in S^{\bullet}$ such that $mc = 0$. The following result is a minor strengthening of the work in \cite[$\S$2]{YZ}, where the result is noted for the case where $I$ is a prime ideal.

\begin{lemma}\label{grog} Let $R$ be a noetherian $k$-algebra with an Auslander dualizing complex $\mathcal{R}$. Then $j(M) \geq j(R/I) + 1$, for every ideal $I$ such that $R/I$ is $j-$pure and every finitely generated torsion right $R/I$-module $M$; and similarly for $j^{\circ}$ with left modules.    
\end{lemma}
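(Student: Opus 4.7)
The plan has two main stages: a noetherian reduction to the case of a cyclic torsion module of the form $\bar{R}/c\bar{R}$, and then a homological computation that exploits the $j$-purity hypothesis via the Yekutieli--Zhang machinery. Set $\bar{R} := R/I$ and $q := j(\bar{R})$. Since $M$ is a finitely generated right $\bar{R}$-module, I will choose a chain $0 = M_0 \subsetneq M_1 \subsetneq \dots \subsetneq M_n = M$ with each factor $M_i/M_{i-1}$ cyclic. Each such factor inherits torsionness from $M$, so it is isomorphic to $\bar{R}/L_i$ for a right ideal $L_i$ of $\bar{R}$ containing some $c_i \in \bar{R}^\bullet$, hence is a quotient of $\bar{R}/c_i\bar{R}$. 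Exactness of $j$ (Theorem \ref{gradedim}) then yields $j(M) \ge \min_i j(M_i/M_{i-1}) \ge \min_i j(\bar{R}/c_i\bar{R})$, so it suffices to prove $j(\bar{R}/c\bar{R}) \ge q+1$ for every $c \in \bar{R}^\bullet$.

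For this, I will use that left multiplication $\ell_c$ is an injective endomorphism of $\bar{R}$ as a right $R$-module (since $c$ is a non-zero-divisor), giving a short exact sequence
\[
0 \longrightarrow \bar{R} \xrightarrow{\ell_c} \bar{R} \longrightarrow \bar{R}/c\bar{R} \longrightarrow 0.
\]
Applying $\mathrm{RHom}_R(-,\mathcal{R})$ and invoking the vanishing $\Ext^i_R(\bar{R},\mathcal{R})=0$ for $i<q$ at once forces $\Ext^i_R(\bar{R}/c\bar{R},\mathcal{R})=0$ for $i\le q-1$. The desired bound $j(\bar{R}/c\bar{R})\ge q+1$ thus reduces to showing that $\Ext^q_R(\bar{R}/c\bar{R},\mathcal{R})=0$, which, by the long exact sequence at degree $q$, is equivalent to the induced map $c\cdot : E \to E$ being injective, where $E := \Ext^q_R(\bar{R},\mathcal{R})$.

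The injectivity step is the heart of the argument and is where $j$-purity enters. Note that $E$ is naturally an $\bar{R}$-bimodule, because $I$ annihilates $\bar{R}$ from both sides and $\mathcal{R}$ is a bimodule complex. From the Auslander property of $\mathcal{R}$ together with the $j$-purity of $\bar{R}$, I will invoke Yekutieli--Zhang's duality theory for pure modules (the same circle of ideas used in \cite[\S 2]{YZ} for the prime ideal case) to deduce that $E$ is $j^\circ$-pure of grade exactly $q$ and that the biduality map identifies $\Ext^q_{R^\circ}(E,\mathcal{R})$ with $\bar{R}$. If the kernel $K$ of $c\cdot$ on $E$ were nonzero, $K$ would be a nonzero left sub-$\bar{R}$-module of $E$ left-annihilated by $c$; applying $\Ext^q_{R^\circ}(-,\mathcal{R})$ and the biduality would then produce a nonzero right quotient of $\bar{R}$ on which $c$ acts as zero, contradicting the regularity of $c$ through an essential-right-ideal argument of Goldie type parallel to the use of \cite[Proposition 6.13]{goodearl-warfield04} in Lemma \ref{drop}.

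The main obstacle lies precisely in this last paragraph: one must extract the correct biduality and purity statements for $E$ from the Yekutieli--Zhang framework, and then translate the identity $cK = 0$ inside $E$ into a statement about $\bar{R}$ that contradicts $c \in \bar{R}^\bullet$. For $I = P$ a prime this translation is essentially what is carried out in \cite[\S 2]{YZ}; the content of the present lemma is that $j$-purity of $\bar{R}$ is all that is really required, so that exactly the same passage through the dualizing complex goes through.
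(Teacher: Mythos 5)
Your first stage (reduction, via exactness of $j$, to showing $j\bigl(R/(cR+I)\bigr)\ge j(R/I)+1$ for $c+I\in (R/I)^{\bullet}$, and the long-exact-sequence observation that this amounts to vanishing of $\Ext^q_R(R/(cR+I),\mathcal{R})$ where $q=j(R/I)$, equivalently injectivity of the endomorphism of $E=\Ext^q_R(R/I,\mathcal{R})$ induced by multiplication by $c$) is sound and agrees with the paper up to the point where the real work starts. But the step you yourself flag as the heart of the argument is a genuine gap, not a routine appeal to \cite{YZ}. The biduality you invoke, namely an identification $\Ext^q_{R^{\circ}}(E,\mathcal{R})\cong R/I$, is stronger than what the Yekutieli--Zhang machinery gives: biduality holds at the level of the derived category, and at the level of individual $\Ext$ groups one only gets a double-Ext spectral sequence, which for a $j$-pure module yields a canonical map $R/I\to\Ext^q_{R^{\circ}}(\Ext^q_R(R/I,\mathcal{R}),\mathcal{R})$ that is injective with cokernel of grade at least $q+2$ --- not an isomorphism. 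In addition, to run your kernel argument you would still need to establish that $E$ is $j^{\circ}$-pure of grade exactly $q$, that $\Ext^q_{R^{\circ}}(K,\mathcal{R})\ne 0$ for the kernel $K$ (the functor is contravariant, so the map out of $\Ext^q_{R^{\circ}}(E,\mathcal{R})$ need not be surjective onto anything nonzero a priori), and to keep the left/right actions straight (the map induced by $\ell_c$ on $E$ is the \emph{right} action of $c$ coming from the left multiplication structure of $R/I$). None of this is supplied, and assembling it would essentially amount to redoing a substantial part of \cite[\S 2]{YZ}.

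The paper's own proof avoids all of this with a much softer argument, which you miss: exactness gives $j(R/(cR+I))\ge j(R/I)$, Proposition \ref{finite} guarantees these grades are finite integers, and if equality held then the isomorphisms $(c^iR+I)/(c^{i+1}R+I)\cong R/(cR+I)$ for all $i\ge 0$ would produce arbitrarily long chains of submodules of $R/I$ all of whose subfactors have grade $j(R/I)$, contradicting the finite partitivity of the canonical dimension $-j$ (Theorem \ref{gradedim}). So the inequality must be strict, which is exactly the assertion of the lemma. I would encourage you either to adopt this partitivity argument, or, if you want to pursue the duality route, to state and prove (or precisely cite) the purity and spectral-sequence facts your injectivity step requires.
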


\begin{proof}  By the exactness of $j$ it is enough to prove this when $M = R/cR + I$, where $c \in R$ is such that $c + I \in (R/I)^{\bullet}$. Moreover exactness also implies that 
\begin{equation}\label{fly} j(R/cR + I) \; \geq \; j(R/I), 
\end{equation}
so by Proposition \ref{finite} it remains only to show that equality is impossible in (\ref{fly}). To see this, note that, for all $i \geq 0$,
\begin{equation}\label{down} c^i R + I/c^{i + 1}R + I \; \cong \; R/cR + I.
\end{equation}
However $-j$ is finitely partitive by Theorem \ref{gradedim}, so (\ref{down}) shows that equality in (\ref{fly}) would yield a contradiction.
\end{proof}

In particular, we thus see that $j(R/cR + I) \geq j(R/I) + 1$ when $R, I $ and $c$ are as in Lemma \ref{grog} and its proof. To show that this is in fact an equality seems more tricky, requiring a resort to Gabber's Maximality Principle, which we now recall.

\begin{definition}\label{GabberMax}\cite[page 145]{Bj}
 Let $\delta$ be a dimension function on $\mathrm{Mod}(R)$ and let $N$ be a (not necessarily finitely generated) $\delta-$pure $R$-module with $\delta(N) = n$. Then $\delta$ is said to satisfy \defit{Gabber's Maximality Principle} on  $N$ 
if, for every finitely generated submodule $M$ of $N$,  there exists a submodule $\widetilde {M}$ of $N$  maximal such that $\delta(\widetilde {M} / M ) \leq n - 2$, and $\widetilde {M} $ is finitely generated. 
\end{definition}

The crucial point to observe about the definition is that $N$ is \emph{not} assumed to be finitely generated. Note that when $\delta$ is exact, we can see easily that the module $\widetilde {M}$ in the definition is unique. Here is the result we need about Gabber's Maximality Principle. 

\begin{theorem}\label{compGabber}\cite[Theorem 2.19]{YZ} Let $R$ be a noetherian $k$-algebra with an Auslander dualizing complex $\mathcal{R}$, and let $-j = -j_{\mathcal{R};R}$ be the associated canonical dimension function on $\mathrm{Mod}(R)$. Then $-j$ satisfies Gabber's Maximality Principle on all nonzero $j$-pure $R$-modules. 
\end{theorem}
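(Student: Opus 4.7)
The plan is to build $\widetilde{M}$ as a possibly-infinitely-generated maximal submodule via a Zorn argument relying only on the exactness of $j$, and then to extract finite generation from the duality afforded by $\mathcal{R}$.

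Set $n = j(N)$ and consider the family
\[
  \mathcal{F} \;=\; \{\, M'\,:\, M \subseteq M' \subseteq N,\ j(M'/M) \geq n+2 \,\},
\]
where $j$ on the (possibly infinitely generated) quotient is interpreted via the extension as an infimum over finitely generated submodules discussed after Notation \ref{notation}. The exactness of $j$ from Theorem \ref{gradedim} shows that $\mathcal{F}$ is closed under directed unions: any finitely generated submodule of $(\bigcup_\alpha M'_\alpha)/M$ lives already in some $M'_\alpha/M$, hence by exactness on the embedding has grade $\geq n+2$. Closure under finite sums follows from $(M'_1 + M'_2)/M'_1 \cong M'_2/(M'_1 \cap M'_2)$, which is a quotient of $M'_2/M$ and hence, by exactness, of grade $\geq n+2$; the short exact sequence $0 \to M'_1/M \to (M'_1 + M'_2)/M \to (M'_1 + M'_2)/M'_1 \to 0$ then gives $M'_1 + M'_2 \in \mathcal{F}$. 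Zorn's lemma produces a maximal $\widetilde{M} \in \mathcal{F}$, and this element is unique because two maximal elements would sum to a strictly larger element of $\mathcal{F}$.

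The main obstacle is to show that $\widetilde{M}$ is finitely generated, and this is where the Auslander dualizing complex must be used in an essential way. The strategy is to apply the dualizing functor $D = \mathrm{RHom}_R(-, \mathcal{R})$ to the short exact sequences
\[
  0 \to M \to \widetilde{M} \to \widetilde{M}/M \to 0 \quad\text{and}\quad 0 \to \widetilde{M} \to N \to N/\widetilde{M} \to 0,
\]
and to analyze the resulting long exact sequences of $\mathrm{Ext}^q_R(-, \mathcal{R})$. Since $M$ is finitely generated, each $\mathrm{Ext}^q_R(M, \mathcal{R})$ is finitely generated as an $R^\circ$-module; the grade bound $j(\widetilde{M}/M) \geq n+2$ forces $\mathrm{Ext}^{<n+2}_R(\widetilde{M}/M, \mathcal{R}) = 0$; and the $j$-purity of $N$ together with the Auslander condition constrains the $R^\circ$-side grades of $\mathrm{Ext}^q_R(N, \mathcal{R})$.

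The finishing step is to feed these vanishing and grade statements into the biduality spectral sequence
\[
  E_2^{p,q} \;=\; \mathrm{Ext}^p_{R^\circ}\bigl(\mathrm{Ext}^q_R(X, \mathcal{R}), \mathcal{R}\bigr) \;\Longrightarrow\; X,
\]
applied at $X = \widetilde{M}/M$. The associated grade filtration identifies the $\geq(n+2)$-graded content of $\widetilde{M}/M$ with a subquotient of a module built entirely from the finitely generated $R^\circ$-modules $\mathrm{Ext}^{\geq n+2}_R(M, \mathcal{R})$. The delicate core of the argument, and its main technical difficulty, is to show that the Auslander vanishing arranges matters so that the ``high-grade torsion'' of $N/M$ receives contributions only from the finitely generated part $M$; once this is established, $\widetilde{M}/M$ is recognised as a subquotient of a finitely generated module, and hence $\widetilde{M}$ itself is finitely generated. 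This transfer of finite generation from the dual side back to $\widetilde{M}$, powered by the Auslander condition, is the heart of Yekutieli and Zhang's proof.
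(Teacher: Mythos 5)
The paper does not prove this statement at all --- it is quoted verbatim from Yekutieli--Zhang \cite[Theorem 2.19]{YZ} --- so the only question is whether your reconstruction stands on its own, and it does not. The Zorn/exactness part of your argument is fine (and is indeed the easy half: it uses only the exactness of $j$ and its extension to infinitely generated modules, not the dualizing complex), but the entire content of Gabber's Maximality Principle is the finite generation of $\widetilde{M}$, and at exactly that point you write that ``the delicate core of the argument \dots is to show that the high-grade torsion of $N/M$ receives contributions only from the finitely generated part $M$.'' That sentence is the theorem; deferring it means no proof has been given.

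Moreover, the route you indicate for that step is not sound as stated. The biduality spectral sequence $\Ext^p_{R^{\circ}}(\Ext^q_R(X,\mathcal{R}),\mathcal{R}) \Rightarrow X$ is available only for $X$ in $\mathrm{D}^b_f(\mathrm{Mod}(R))$, i.e.\ for complexes with \emph{finitely generated} cohomology; applying it to $X=\widetilde{M}/M$, whose finite generation is precisely what is to be proved, is circular. Likewise, the claim that $j(\widetilde{M}/M)\geq n+2$ forces $\Ext^{<n+2}_R(\widetilde{M}/M,\mathcal{R})=0$ is not automatic, because for infinitely generated modules the extended $j$ is defined as an infimum over finitely generated submodules, and $\Ext$ in the first variable does not pass through such direct limits without further argument. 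The actual proof (Gabber's argument, as carried out in \cite{YZ} and in Bj\"ork's account) never dualizes the infinitely generated quotient: it works with the directed family of \emph{finitely generated} intermediate modules $M\subseteq M'\subseteq N$ with $j(M'/M)\geq n+2$, maps their $\Ext$-modules into the fixed noetherian $R^{\circ}$-module $\Ext^{\bullet}_R(M,\mathcal{R})$, and uses the stabilization of the images there, together with the Auslander condition and the $j$-purity of $N$, to show the family itself stabilizes at a finitely generated $\widetilde{M}$. If you want a complete argument you either need to reproduce that stabilization mechanism or simply cite \cite[Theorem 2.19]{YZ}, as the paper does.
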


Now we are in a position to see that every prime factor of a noetherian $k$-algebra with an Auslander dualizing complex is a BF-ring:

\begin{theorem}\label{gradejump} Let $R$ be a noetherian $k$-algebra with an Auslander dualizing complex $\mathcal{R}$, and let $j = j_{\mathcal{R};R}$ be the associated grade function on $R$-modules.
\begin{enumerate}
\item[(1)] If $R/I$ is a $j-$pure or $j^{\circ}-$pure factor ring of $R$ with an artinian classical ring of quotients, then $R/I$ is a BF-ring.
\item[(2)] Let $P$ be a prime ideal of $R$. Then $R/P$ is a BF-ring. 
\end{enumerate}
\end{theorem}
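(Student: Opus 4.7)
The plan is to reduce part (2) to part (1) and prove (1) via \cref{easyandimportant}. For (2), $R/P$ is a prime noetherian ring, so by Goldie's theorem it has a simple artinian classical quotient ring, and it is both $j$-pure and $j^{\circ}$-pure by \cref{drop}; hence the hypothesis of (1) applies. For (1), I treat the $j$-pure case, the $j^{\circ}$-pure case being symmetric by working with left modules. Set $S = R/I$ and $\beta = j(S)$, a nonnegative integer by \cref{finite}. Since $-j$ is finitely partitive on every ordinal by \cref{gradedim}, so is $j$, and by \cref{easyandimportant} it suffices to prove
\[ j(S/xS) \;=\; \beta + 1 \qquad \text{for every } x \in S^{\bullet} \setminus S^{\times}. \]
The lower bound $j(S/xS) \geq \beta + 1$ is immediate from \cref{grog}, since $S/xS$ is a finitely generated torsion $S$-module.

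For the upper bound $j(S/xS) \leq \beta + 1$ I invoke Gabber's Maximality Principle (\cref{compGabber}). Let $Q$ denote the artinian classical quotient ring of $S$, in which $x$ is invertible, and form the right $R$-submodule $N := \bigcup_{n \geq 0} x^{-n} S \subseteq Q$. Each $x^{-n} S$ is $R$-isomorphic to $S$ via left multiplication by $x^n$, so $j$-purity of $S$ propagates to $N$: any nonzero finitely generated submodule $L$ of $N$ is contained in some $x^{-k} S$, whence $j(L) = j(x^k L) = \beta$, so $N$ is $j$-pure with $j(N) = \beta$. Now suppose for contradiction that $j(S/xS) \geq \beta + 2$. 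Exactness of $j$, applied to the filtration of $S/x^n S$ with successive quotients all isomorphic to $S/xS$, gives $j(S/x^n S) \geq \beta + 2$, and so $j(x^{-n} S / S) \geq \beta + 2$ for every $n$. Applying \cref{compGabber} to $N$ with finitely generated submodule $M = S$ produces a finitely generated $\widetilde S \subseteq N$, maximal with respect to the property $j(\widetilde S / S) \geq j(N) + 2$. Maximality forces $x^{-n} S \subseteq \widetilde S$ for every $n$, so $N = \widetilde S$ is finitely generated, and hence $N \subseteq x^{-k} S$ for some $k$. Then $x^{-k-1} \in x^{-k} S$ gives $x^{-1} \in S$, contradicting $x \notin S^{\times}$.

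The principal obstacle is precisely this upper bound: \cref{grog} supplies only one grade jump, and precluding a second jump demands a $j$-pure ambient module that fails to be finitely generated whenever $x$ is a nonunit. The choice $N = \bigcup_n x^{-n} S$ inside the artinian quotient ring $Q$ is natural because the strict ascending chain $x^{-n} S \subsetneq x^{-(n+1)} S$ is precisely the manifestation of $x$ not being a unit, and this is exactly what collides with the finite generation conclusion of Gabber's Principle. Once the identity $j(S/xS) = \beta + 1$ is established, applying it to the chain of submodules of $S/xS$ arising from any factorization $x = a_1 \cdots a_t$ into nonunits produces a chain with all successive quotients of grade $\beta + 1$, and finite partitivity of $j$ on $\beta + 1$ bounds its length.
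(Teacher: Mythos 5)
Your proof is correct and follows essentially the same route as the paper: one grade jump from Lemma \ref{grog}, exclusion of a second jump via Gabber's Maximality Principle (Theorem \ref{compGabber}) applied to the non-finitely-generated union $\bigcup_n x^{-n}S$ inside the artinian quotient ring, and then Lemma \ref{easyandimportant}, with part (2) reduced to part (1) via Lemma \ref{drop} and Goldie's theorem. The only cosmetic difference is that you apply Gabber's Principle directly to the $j$-pure module $N=\bigcup_n x^{-n}S$ (verifying its purity by hand), whereas the paper applies it to the ambient quotient ring $Q(R/I)$ and concludes that $\bigcup_n \widehat{c}^{-n}R/I$ is finitely generated; the contradiction reached is identical.
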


\begin{proof}(1) Let $c \in R$ be such that $c +I$ is a regular nonunit of $R/I$. We claim that  
\begin{equation}\label{key} j(R/cR + I) \; = \; j(R/I) + 1. 
\end{equation}
By Lemma \ref{grog} and Proposition \ref{finite},
\begin{equation}\label{lessthan}  j (R/I) <  j (R/cR + I) <   \omega.
\end{equation}
Suppose for a contradiction that 
\begin{equation}\label{false}   j(R/cR + I) \; \geq  \; j(R/I) +  2. 
\end{equation}
Let $Q(R/I)$ denote the artinian quotient ring of $R/I$. Write $\widehat{c} := c + I \in (R/I)^{\bullet}$, and define 
$$\widehat{M} \; := \;\bigcup_{n \geq 0}\widehat{c}^{-n}R/I \; \subseteq Q(R/I).$$
For each $n \geq 1$, $\widehat{c}^{-n}R/I$ has a finite chain of $R$-submodules $\{M_i := \widehat{c}^{-i}R/I : 0 \leq i \leq n \}$, with $M_0 = R/I$ and 
successive subfactors $M_{i+1}/M_i$ isomorphic to $R/cR + I$ for $0 \leq i < n$. Therefore, by exactness of $j$ and (\ref{false}),
\begin{equation}\label{done} j(\widehat{M}/(R/I)) \; \geq \; j(R/I) + 2. 
\end{equation}
However $R/I$ is $j$-pure by hypothesis. Hence $Q(R/I)$, being a union of copies of $R/I$, is also 
$j$-pure. So Theorem \ref{compGabber} implies that $\widehat{M}/(R/I)$ is a finitely generated $R/I$-submodule
 of $Q(R/I)/(R/I)$. Since $c + I$ is not a unit of $R/I$, this is easily seen to be impossible, so (\ref{false}) is false and 
 \begin{equation} 
    \label{factor} j(R/cR + I) \; = \; j(R/I) + 1. 
    \end{equation}
The result now follows at once from Lemma \ref{easyandimportant}. 

(2) This is a special case of (1), since $R/P$ is $j-$pure by Lemma \ref{drop} and $R/P$ has a simple artinian quotient ring by Goldie's theorem.
\end{proof}  

\medskip

\begin{remarks}\label{dualexs}(1) The question of which affine noetherian $k$-algebras have (Auslander) dualizing complexes is extensively discussed in \cite{Van}, \cite{YZ}. For example, \cite[Corollary 6.8]{YZ} states (roughly) that an $\mathbb{N}-$filtered $k$-algebra, with $A_0 = k$ and $\mathrm{dim}_k(A_n) < \infty$ for all $n$, whose associated graded algebra is noetherian and has a (graded) Auslander dualizing complex will itself have an Auslander dualizing complex.
\end{remarks} 

(2) It is a consequence of \cite[Corollary 2.18]{YZ} that a noetherian $k$-algebra which has an Auslander dualizing complex must have finite (Gabriel-Rentschler) Krull dimension. This suggests a possible direction in which to look for a noetherian algebra \emph{not} satisfying property BF.

(3) If one is faced in the setting of Theorem \ref{gradejump}(1) with an ideal $I$ of the algebra $R$ which is \emph{not} semiprime, it may in practice be difficult to determine whether the hypotheses of $j-$purity of $R/I$, and the existence of an artinian quotient ring of $R/I$, are satisfied. In fact, it may be that $j-$purity \emph{implies} the existence of an artinian quotient ring. 

Here is an important case where this is true, and moreover where $j-$purity may become much easier to determine. Let $R$, $\mathcal{R}$ and $j= j_{\mathcal{R};R}$ be as in Theorem \ref{gradejump}, and suppose that $R$ has finite Gelfand-Kirillov dimension, $\mathrm{GKdim}(R) = n < \infty$. (See \cite{GW} for the properties of Gelfand-Kirillov dimension.) Following \cite[Definition 5.8]{Lev} and the generalisation in \cite[Definition 2.24]{YZ} we say that $R$ is \defit{GK-Cohen Macaulay with respect to} $j_{\mathcal{R};R}$ if, for every nonzero finitely generated $R$-module $M$,
\begin{equation}\label{CM} \mathrm{GKdim}(M) + j_{\mathcal{R};R}(M) \; = \; n. 
\end{equation}

In the setting of Theorem \ref{gradejump} we know by Theorem \ref{gradedim} that $j$ is exact, so when $R$ is also GK-Cohen Macaulay the Gelfand-Kirillov dimension is also exact, by (\ref{CM}). Moreover (\ref{CM}) also shows that the $j-$purity of $R/I$ is equivalent to GK-purity of $R/I$; usually, we say then that $R/I$ is \defit{GK-homogeneous}. Finally, when GK-dimension is exact, a GK-homogeneous noetherian algebra has an artinian quotient ring, by \cite[Theorem 5.4]{Lev}. We have thus obtained from Theorem \ref{gradejump}(1):

\begin{corollary}\label{GKBF} Let $R$ be a noetherian $k$-algebra of finite GK-dimension which has an Auslander dualizing complex $\mathcal{R}$. Suppose that $R$ is GK-Cohen Macaulay with respect to $j_{\mathcal{R};R}$, and let $I$ be an ideal of $R$ with $R/I$ GK-homogeneous. Then $R/I$ is a BF-ring.
\end{corollary}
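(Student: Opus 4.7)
The plan is to deduce this as a direct consequence of Theorem \ref{gradejump}(1) applied to the ideal $I$, so the work amounts to verifying the two hypotheses of that theorem: namely, that $R/I$ is $j$-pure and that $R/I$ possesses an artinian classical ring of quotients.

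First I would leverage the Cohen--Macaulay identity \eqref{CM} to convert GK-homogeneity of $R/I$ into $j$-purity. Since $\mathrm{GKdim}(M) + j(M) = n$ for every nonzero finitely generated $R$-module $M$, the value of $j$ is determined by $\mathrm{GKdim}$, and any nonzero submodule $N$ of $R/I$ satisfies $\mathrm{GKdim}(N) = \mathrm{GKdim}(R/I)$ by GK-homogeneity. Plugging into \eqref{CM} yields $j(N) = j(R/I)$, so $R/I$ is $j$-pure as required.

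Second, I would use exactness of $j$ (from Theorem \ref{gradedim}) together with \eqref{CM} to conclude that $\mathrm{GKdim}$ is exact on finitely generated $R$-modules; a GK-homogeneous noetherian algebra with exact GK-dimension then has an artinian classical ring of quotients by \cite[Theorem 5.4]{Lev}. With both hypotheses verified, Theorem \ref{gradejump}(1) applies and gives that $R/I$ is a BF-ring.

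There is no real obstacle here — the entire contribution of the corollary is the bookkeeping step of translating the GK-theoretic hypothesis (GK-homogeneity, which is often easier to check in practice) into the homological hypothesis ($j$-purity plus an artinian quotient ring) needed to invoke Theorem \ref{gradejump}(1). The heavy lifting has already been done in Theorem \ref{gradejump} via Gabber's Maximality Principle.
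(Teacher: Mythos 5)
Your argument is correct and is essentially the paper's own proof: both convert GK-homogeneity into $j$-purity via the Cohen--Macaulay identity \eqref{CM}, deduce exactness of $\mathrm{GKdim}$ from exactness of $j$ (Theorem \ref{gradedim}) plus \eqref{CM}, invoke \cite[Theorem 5.4]{Lev} for the artinian quotient ring, and then apply Theorem \ref{gradejump}(1). No gaps.
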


Corollary \ref{GKBF} prompts the following 

\begin{questions} (1) Does every affine noetherian $k$-algebra of finite Gelfand-Kirillov dimension have an Auslander dualizing complex?

\noindent (2) Is every affine noetherian $k$-algebra of finite Gelfand-Kirillov dimension a BF-ring?  
\end{questions}
This would provide a major generalization of the results from \cref{sec:quadratic}.

\medskip

(4) Regarding Remark \ref{dualexs}(3), we might hope that, given a noetherian $k$-algebra $R$ with finite and exact GK-dimension, property BF 
for $R$ could be approached by applying Lemma \ref{easyandimportant} directly, using $\mathrm{GKdim}$ as the function $j$ of that lemma. However it is not clear that the Gelfand-Kirillov dimension will 
be finitely partitive on $R$-modules, nor that $\mathrm{GKdim}(R/\alpha R) = \mathrm{GKdim}(R) - 1$ for $\alpha \in R^{\bullet} \setminus R^{\times}$. Recall that, when $\mathrm{GKdim}$ is replaced by 
the Krull dimension $\mathrm{Kdim}$, the latter property fails drastically for the Weyl algebras: in a famous paper \cite{St}, Stafford showed that, for every $n \geq 2$, the Weyl algebra $A_n 
(\mathbb{C})$ contains an element $c$ and $d$ such that $A_n (\mathbb{C})/c A_n(\mathbb{C})$ is a simple module and so has Krull dimension $0$, 
whereas $\mathrm{Kdim}(A_n (\mathbb{C})/dA_n(\mathbb{C})) = n - 1$. Since the Weyl algebras are Auslander regular and GK-Cohen Macaulay, we know from (the proof of) Corollary \ref{AG} that
$$\mathrm{GKdim}(A_n (\mathbb{C})/c A_n(\mathbb{C})) = \mathrm{GKdim}(A_n (\mathbb{C})/d A_n(\mathbb{C})) = 2n -1,$$
but this can be be proved directly, much more easily - see \cite[Corollary 8.6]{GW}.

\bigskip

\subsection{Commutative noetherian rings revisited}\label{commagain} Recall that the \defit{support} of a module $M$ over a commutative ring $R$ is the set $\mathrm{supp}(M)$ of prime ideals of $R$ such that $M_P \neq 0$. It is easy to see that if $M$ is finitely generated, $\mathrm{supp}(M) = \{P : \mathrm{Ann}(M) \subseteq P \}$, where $\mathrm{Ann}(M)$ denotes the annihilator of $M$. Let $\mathrm{ht}(P)$ denote the height of a prime ideal $P$ of $R$. Consider the following definition:

\begin{definition}\label{suppheight}Let $R$ be a commutative noetherian ring. Define a function $j$ on $\mathrm{Mod}_f (R)$ by
$$ j(M) \; := \; \mathrm{inf}\{ \mathrm{ht}(P) : P \in \mathrm{supp}(M) \}, $$
with $j(0) := \infty$. 
\end{definition}

It is easy to check that the above map $j$ is exact on $\mathrm{Mod}_f (R)$ - this is an immediate consequence of the exactness of localisation. Thus $j$ can be extended to a map from $\mathrm{Mod}(R)$ to $\mathbb{Z}_{\geq 0} \cup \{\infty\}$ as discussed after Notation \ref{notation}. We can give yet another proof of 

\begin{proposition}\label{commmore} If $R$ is a commutative noetherian ring then $R$ is a BF-ring.
\end{proposition}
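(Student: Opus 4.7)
The plan is to apply Lemma \ref{easyandimportant} to the function $j$ of Definition \ref{suppheight} with $\alpha = 1$. Hence two points must be verified: that $j(R/xR) = 1$ for every $x \in R^\bullet \setminus R^\times$, and that $j$ is finitely partitive on $1$.

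For the first, $\mathrm{supp}(R/xR) = V(xR)$ is nonempty because $x$ is a nonunit, and since $x$ is a non-zerodivisor it lies in no associated prime of $R$, and in particular in no minimal prime; hence no height-$0$ prime of $R$ belongs to $V(xR)$. Krull's principal ideal theorem guarantees that every prime minimal over $xR$ has height at most $1$, and therefore exactly $1$, so $j(R/xR) = 1$.

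The main content is the finitely partitive step. Fix $M \in \mathrm{Mod}_f(R)$ with $j(M) = 1$. Any height-$1$ prime $P$ in $\mathrm{supp}(M) = V(\mathrm{Ann}(M))$ must be minimal over $\mathrm{Ann}(M)$: otherwise a prime $Q$ strictly between $\mathrm{Ann}(M)$ and $P$ would have height $0$ and lie in $\mathrm{supp}(M)$, contradicting $j(M) = 1$. So these primes form a finite set $P_1, \ldots, P_k$. For each $i$, the same dichotomy shows that $M_{P_i}$ is a finitely generated module over the $1$-dimensional local ring $R_{P_i}$ whose support in $\mathrm{Spec}(R_{P_i})$ reduces to $\{P_i R_{P_i}\}$, and hence has finite length $n_i$.

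Given a chain $0 = M_0 \subseteq M_1 \subseteq \cdots \subseteq M_n = M$ with $j(M_{\ell+1}/M_\ell) = 1$ for every $\ell$, each subfactor has some height-$1$ prime in its support, necessarily one of the $P_i$, so $(M_\ell)_{P_i} \subsetneq (M_{\ell+1})_{P_i}$ for at least one $i$ depending on $\ell$. For each fixed $i$, the number of indices $\ell$ at which the chain localized at $P_i$ strictly grows is bounded by $n_i$; summing over $i$ yields $n \leq n_1 + \cdots + n_k$. This establishes the finitely partitive property, and Lemma \ref{easyandimportant} then completes the proof. The only delicate step is the finitely partitive bound, which relies on both the finiteness of $\{P_1, \ldots, P_k\}$ and the finite-length conclusion; the computation $j(R/xR) = 1$ is essentially immediate from the Hauptidealsatz.
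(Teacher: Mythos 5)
Your proposal is correct and is essentially the paper's own proof: both apply Lemma \ref{easyandimportant} to the support-height function $j$ of Definition \ref{suppheight}, obtain $j(R/xR)=1$ from the Principal Ideal Theorem together with the fact that minimal primes consist of zero divisors, and prove finite partitivity by localizing at the finitely many height-one (hence minimal over $\ann(M)$) primes in the support and bounding chain lengths by the sum of the finite lengths of the localizations. The only difference is cosmetic: you verify partitivity only at $\alpha=1$ (which suffices for the lemma), while the paper states it for arbitrary $n$.
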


\begin{proof} This is a consequence of Lemma \ref{easyandimportant} provided it can be shown that the above map $j$ is (i) finitely partitive on $\mathrm{Mod}_f (R)$, and (ii) $j(R/xR) = 1$ for all $x \in R^{\bullet} \setminus R^{\times}$.

To prove (i) let $M$ be a nonzero finitely generated $R$-module with $j(M) = n$, and let $\{P_1, \ldots , P_t \}$ be the subset of primes containing $\mathrm{Ann}(M)$ of height $n$. Note that this set is finite, since it consists of primes minimal over $\mathrm{Ann}(M)$. For each $i = 1, \ldots , t$, $M_{P_{i}}$ is a nonzero $R_{P_{i}}$-module of finite composition length, say $\ell_i$. Now it is easy to see that any chain of $R$-submodules of $M$ with all subfactors $X$ having $j(X) = n$ must have length at most $\sum_i \ell_i$, as required. 

Since (ii) is an immediate consequence of the Principal Ideal Theorem and the fact that minimal primes of $R$ consist of zero divisors, \cite[Theorem 10.1 and Corollary 2.18]{Eis}, this completes the proof.  
\end{proof}

Note however that this is essentially a rephrasing of the classical proof \cite[Proposition 2.2]{anderson-anderson-zafrullah90} into the present setting.

\bigskip

The following example shows that, while the function $j$ of Definition \ref{suppheight} satisfies the hypotheses of Lemma \ref{easyandimportant}, $-j$ does \emph{not} satisfy Gabber's Maximality Principle.

\begin{example}\label{melexample}(M. Hochster, R. Heitmann) The ring  $S = K\llbracket x,y,z\rrbracket /(x^2, xy)$, where $K$ is a field  of characteristic zero,  is the completion of a noetherian domain $R$. The function $-j$ described after the Example \ref{8.6}
 does not satisfy 
Gabber's Maximality Principle on $Q_R$, with $Q$ the denoting the quotient field of $R$. 

\medskip
The proof of this claim is organized in four steps.

\smallskip
\noindent
\textbf{Step (1)}: \emph{$S$ is a local noetherian complete ring of dimension $2$.}

The formal power series ring  $k\llbracket x, y, z\rrbracket$ is a local noetherian complete ring of dimension $3$ (see \cite [Exercise 15.30]{Sh00}). 
Since a quotient of a complete ring is again complete, the ring $S$ is complete, and due to the chain
\[
  (x)/ (x^2, xy)  \subsetneq  (x, y) / (x^2, xy) \subsetneq (x, y, z) / (x^2, xy) \subsetneq S
\]
of prime ideals, 
 $S$ is of dimension $2$.  

 \smallskip
\noindent
 \textbf{Step (2)}: \emph{$S$ is the completion of a noetherian local domain, say $R$.}
 
We must check the two conditions in \cite [Theorem 1] {Lech}. Thus we have to show:
(i) the prime ring of $S$, that is the set $\{\, k1_S : k \in \Bbb{Z} \,\}$, is a domain that acts on $S$ without torsion; 
(ii) the maximal ideal of $S$ does not belong to the set of associated prime ideals of $0$.

Since $S$ is an $K$-algebra and $K$ is of characteristic zero, condition (i) is trivially satisfied.
To verify (ii), let $0 = Q_1 \cap \cdots \cap Q_n$ be a primary decomposition of $0$. The associated prime ideals of $0$ are the radicals $P_i$ of $Q_i$. The union $P_1 \cup \cdots \cup P_n$ is the set of zero-divisors of $S$.
Since $\bar {z} \in (x, y, z)/(x^2, xy)$ is not a zero-divisor, the maximal ideal $(x, y, z)/ (x^2, xy)$ 
is not an associated prime ideal of $0$. Therefore (ii) holds too.

\smallskip
\noindent
\textbf{Step (3)}:  \emph{$R$ is of dimension $2$ and $H ^1_{\fm_R} (R)$,  that is, the direct limit of the direct system $\{\, \Ext^1( R/ \fm_R^k , R) : k \geq 1 \,\}$,  is not a finitely generated $R$-module. \textup{(}Here $\fm_R$ denotes the maximal ideal of $R$.\textup{)}}

Krull dimension is preserved by passing to the completion. Therefore $R$ has Krull dimension $2$.
There is a natural embedding from $R$ into its completion $S$ and the maximal ideal of $S$, that is $\fm_S$, is equal to $\fm_R S$. 
We have an $S$-isomorphism $H^1 _{\fm_R} (R) \cong  H^1_{\fm_S} (S) $, see \cite [Lemma 3.5.4(d)]{BRHE98}.
To show that $H^1_{\fm_R} (R)$ is not finitely generated as $R$-module, it is therefore enough to show that $H^1 _{\fm_S} (S) $ is not finitely generated as an $S$-module.

Suppose that $T = K\llbracket x,y,z\rrbracket$.
Then $T$ is a regular local ring  (and so Gorenstein and Cohen-Macaulay) of dimension $3$.
We can apply 
\cite [Theorem 3.3.7]{BRHE98} and Grothendieck's duality (local duality theorem) \cite [Corollary 3.5.9] {BRHE98} to  
$M = S$ to see that $H^1_{\fm_T}(S)$ is the Matlis Dual of $\Ext^2_T (S,T)$.
Now by \cite [Theorem 3.5.4(a)] {BRHE98}, $H^1 _{\fm_S}(S)$ is artinian and also equal to 
$H^1_{\fm_T} (S)$, see \cite [Corollary A1.8] {Apendix}.
So if $H^1 _{\fm_S} (S) $ is a finitely generated $S$-module, so is  $H^1 _{\fm_T} (S) $  and so they are of finite length.
This shows that its dual, that is $\Ext^2 _T (S, T)$, is of finite length, see \cite [Theorem 3.2.13] {BRHE98}. 
Thus $\Ext^2_T(S,T)$ vanishes when one localizes at $P = (x,y)T$.  
Localization commutes with Ext for finitely generated
modules over commutative noetherian rings (\cite [Prop 3.3.10]{weibel}), and this would imply $\Ext^2_{T_P}(S_P, T_P) = 0$.
Again using the duality theorem, since $T_P$ is a regular ring of Krull dimension $2$,  the Matlis dual of $\Ext^2_{T_P}(S_P, T_P)$ over $T_P$ is $H^0_{PT_P}(S_P)$ (this is the direct limit of direct system  $\{ \Hom( T_P / (PT_P) ^k , S_P ) : k\geq 0 \}$).
But the latter can not be zero because $H^0_{PT_P}(S_P)$ is the set of all elements of $S_P$ that are annihilated by a power of $PT_P$, and the image of $x$ in $S_P$ is a nonzero element of $H^0_{PT_P}(S_P)$  (see for example \cite [Page 126, 127] {BRHE98} or  \cite [Page 187]{Apendix}). 

\smallskip
\noindent
\textbf{Step (4)}:  \emph{$H ^1 _{\fm_R} (R) $ is isomorphic to a submodule of $Q/R$ and the minimal prime ideal containing the annihilator of $H ^1 _{\fm_R} (R)$ is $\fm_R$.}

Every noetherian local ring of dimension $d$ has a system of parameters of $d$ elements.
Since $R$ is of dimension $2$, this means that there exist $u$,~$v \in R$ such that $\fm_R$ is the minimal prime ideal over $(u, v)$. 
Then it is known that $H^1 _{\fm_R} (R) = H^1_{(u, v)} (R) $, as the radical of $(u, v)$ is $\fm_R$.
Consider the complex
\[
  0 \to R \overset{f} \longrightarrow  R[u^{-1}] \oplus R[v^{-1}]  \overset{g} \longrightarrow R[(uv)^{-1}] \to 0,
\]
where $f$  maps  $r$  to   $(r,r)$  and $g$ takes  $(r/u^s, r'/v^t)$ to $r/u^s  - r'/v^t$. 
By \cite [Theorem A1.3]{Apendix}, the cohomology $H^1_{(u,v)}(R)$ is isomorphic to $\ker(g)/\im(f)$.
Noting that $\ker(g)$ can be identified with a submodule of $Q$, this  means that $H^1_{\fm_R}(R)$ is isomorphic to a submodule of $Q/R$.

For the other part of Step (4), note that if $k + \im(f)$ is an element of $\ker(g) / \im(f)$, there exist $n$, $m \geq 1$ such that $u^n$, $v^m$ are in the annihilator of $k + \im(f)$.
Therefore a minimal prime ideal containing the annihilator of $k + \im(f)$ must contain $u$ and $v$.
Since $\fm_R$ is the minimal prime containing $(u, v)$, this implies that the minimal prime ideal containing $\ann(k+ \im(f))$ is $\fm_R$.  
Similarly the minimal prime ideal of the annihilator of  every finitely generated submodule of $\ker(g) / \im(f)$ is 
$\fm_R$, which is of height two. 
\end{example}

\section{An example} \label{s:exm-non-bf}

In this section we construct a finitely presented atomic domain that does not have BF.
In fact, it will be an atomic domain that does not have ACCP.
Our example is a semigroup algebra (over a field), and the main difficulty lies in establishing its atomicity.

Let $K$ be a field and $S$ a monoid.
We write $K[S]$ for the semigroup algebra of $S$ over $K$.
Every $f \in K[S]$ has a unique representation of the form
\[
  f = \sum_{s \in S} a_s s \qquad\text{with $a_s \in K$, almost all zero}. 
\]
We write $\supp(f) \coloneqq \{\, s \in S : a_s \ne 0 \,\}$ for the support of $f$.

In the noncommutative setting, in general, the problem of characterising (semi)group algebras that are domains is a very hard one, known as Kaplansky's zero-divisor conjecture \cite[Chapter 13]{passman77}, as is the related characterisation of units (with a recent counterexample to the unit conjecture given by Gardam \cite{gardam21}).
To avoid these issues, we work with right orderable monoids.

A monoid $S$ is \defit{right orderable} if there exists a total ordering on $S$, such that $a < b$ implies $ac < bc$ for all $a$,~$b$,~$c \in S$.
Recall that a submonoid $S \subseteq T$ is \defit{divisor-closed} if, whenever $a$,~$b \in T$  are such that $ab \in S$, then already $a \in S$ and $b \in S$.

\begin{lemma} \label{l:ordered}
  Let $K$ be a field and let $S$ be a right orderable cancellative monoid.
  \begin{enumerate}
  \item\label{ordered:divclosed} $K^{\times} S$ is a divisor-closed submonoid of $K[S]$.
  \item\label{ordered:domain} $K[S]$ is a domain.
  \item\label{ordered:units}  $K[S]^\times = K^\times S^\times$.
  \end{enumerate}
\end{lemma}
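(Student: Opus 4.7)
The plan is to use the right ordering on $S$ to extract a ``leading term'' from products in $K[S]$ and use this to prove all three statements. For nonzero $h = \sum_s a_s s \in K[S]$, let $\mu(h) := \max \supp(h)$, where the maximum is taken with respect to the right ordering $<$. The core step is a \emph{leading-term identity}: for nonzero $f, g \in K[S]$ with $s_0 = \mu(f)$ and $t_0 = \mu(g)$, the element $s_0 t_0$ appears in $\supp(fg)$ with coefficient $a_{s_0} b_{t_0} \ne 0$. To establish this, I would show that the only pair $(s, t) \in \supp(f) \times \supp(g)$ satisfying $st = s_0 t_0$ is $(s_0, t_0)$: the case $s = s_0$ yields $t = t_0$ by left cancellation in $S$; the case $s < s_0$ gives $s t_0 < s_0 t_0$ by right-invariance of $<$, and combined with the constraint $t \le t_0$ and cancellativity this rules out $st = s_0 t_0$. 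The leading-term identity immediately shows $fg \ne 0$, proving (ii).

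For (i), suppose $fg = cu$ with $c \in K^\times$ and $u \in S$, so $\supp(fg) = \{u\}$, and by the leading-term identity $u = \mu(f)\mu(g)$. The argument then splits on $|\supp(f)|$. If $|\supp(f)| = 1$, then $fg = a_{s_0} s_0 g$ has support $s_0 \supp(g)$ (by left cancellation), of cardinality $|\supp(g)|$, which must equal $1$. If $|\supp(f)| \ge 2$, I would write $f = a_{s_0} s_0 + f'$ with $\mu(f') < s_0$ and $g = b_{t_0} t_0 + g'$ with $\supp(g') \subseteq \supp(g) \setminus \{t_0\}$, expand
\[
  fg = a_{s_0} b_{t_0} s_0 t_0 + a_{s_0} s_0 g' + f' b_{t_0} t_0 + f' g',
\]
and combine the leading-term identity applied to $f' g$ with the assumption $\supp(fg) = \{u\}$ to derive the contradiction $f' g = 0$, contradicting (ii). Hence $f, g \in K^\times S$, and closure of $K^\times S$ under multiplication in $K[S]$ is immediate.

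Finally, (iii) is formal. The inclusion $K^\times S^\times \subseteq K[S]^\times$ is clear. Conversely, if $u \in K[S]^\times$ has inverse $v$, then $uv = 1 \in K^\times S$, so by (i) both $u$ and $v$ are monomials: $u = a u_0$ and $v = b v_0$ with $a, b \in K^\times$ and $u_0, v_0 \in S$. Expanding $uv = 1$ in $K[S]$ forces $u_0 v_0 = 1_S$ and $ab = 1$, whence $u_0 \in S^\times$ and $u = a u_0 \in K^\times S^\times$.

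The main obstacle is the leading-term identity itself. Right-invariance of $<$ only controls products when a factor is fixed on the right, so the subtle subcase is $s < s_0$ and $t < t_0$: from right-invariance we obtain $st < s_0 t$ and $s t_0 < s_0 t_0$, but neither of these directly rules out $st = s_0 t_0$. The crux is to exploit cancellativity together with right-invariance to exclude this coincidence. Once this foundational claim is secured the remainder of the proof is essentially bookkeeping.
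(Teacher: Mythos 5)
The decisive step of your plan, the ``leading-term identity'', is not merely left unproven (as you concede in your final paragraph) --- it is false in the stated generality, so no combination of cancellativity and right-invariance can close the gap. Right-invariance compares $st$ with $s_0t$ and $st_0$ with $s_0t_0$, but gives no handle on $st$ versus $s_0t_0$ when $s<s_0$ and $t<t_0$, and this coincidence genuinely occurs. Concretely, take the Klein bottle group $G=\langle a,b \mid aba^{-1}=b^{-1}\rangle$, whose elements are the words $b^ma^n$, and order it by the positive cone $P=\{\,b^ma^n : n>0\,\}\cup\{\,b^m : m>0\,\}$, i.e.\ $x<y$ if and only if $yx^{-1}\in P$; this is a right order that is not left-invariant. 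With $s_0=a$, $s=b^{-1}a$, $t_0=b$, $t=1$ one has $s<s_0$ (since $s_0s^{-1}=b\in P$) and $t<t_0$, yet $st=b^{-1}a=ab=s_0t_0$. Hence for $f=s_0-s$ and $g=t_0+t$ the coefficient of $\mu(f)\mu(g)$ in $fg$ equals $1\cdot 1+(-1)\cdot 1=0$, so $\mu(fg)\neq\mu(f)\mu(g)$. Since this $G$ is a right-orderable cancellative monoid, your key claim fails, and with it all three parts of your argument, which are derived from it; in particular your disposal of the case $s<s_0$ (``combined with the constraint $t\le t_0$ and cancellativity this rules out $st=s_0t_0$'') is exactly the unjustifiable step. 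The identity would hold for a two-sided invariant order, but the monoid the paper applies the lemma to sits inside $F\rtimes_\alpha\mathbb{Z}$ with $aba^{-1}=c$ and $aca^{-1}=b^{-1}$, which is not bi-orderable, so that shortcut is unavailable precisely where the lemma is needed.

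For comparison, the paper's proof is a citation to Passman's Lemmas 13.1.7 and 13.1.9: right-ordered groups have the (two) unique product property, established by a subtler induction/translation argument that selects the uniquely represented products differently --- not as the product of the two maxima --- and from that property the absence of zero divisors, the triviality of units, and the divisor-closedness of $K^\times S$ follow by exactly the kind of formal bookkeeping you carry out in (i) and (iii). So the deductions at the end of your proposal are fine, but to make the proof work you would have to replace the leading-term identity by (a monoid version of) this unique-product theorem.
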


\begin{proof}
  As in the case where $S$ is a group \cite[Lemma 13.1.7 and 13.1.9]{passman77}.
\end{proof}

We now construct the example.
For the rest of this section, let $F$ be the free group on generators $b$,~$c$.
Let $\alpha$ be the group automorphism of $F$ defined by $\alpha(b) = c$ and $\alpha(c)=b^{-1}$.
Consider the semidirect product $G \coloneqq F \rtimes_{\alpha} \operatorname{gr}(a)$, where $(\operatorname{gr}(a), \cdot) \cong (\bZ,+)$ is infinite cyclic.
We identify $a=(1,a)$, $b=(b,1)$, and $c=(c,1)$.
Then $G$ is the group generated by $a$, $b$, $c$, with relations generated by $aba^{-1}=c$ and $aca^{-1}=b^{-1}$.
Let $S \subseteq G$ be the submonoid generated by $a$ and $b$.

\begin{lemma} \label{l:exm-g}
  \begin{enumerate}
  \item \label{exm-g:order} $G$ is right-orderable.
  \item \label{exm-g:bf} $K[G]$ is a BF-domain.
  \end{enumerate}
\end{lemma}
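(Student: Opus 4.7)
The plan is as follows. For~\ref{exm-g:order}, I would use the standard fact that an extension of right-orderable groups is right-orderable. Both $\mathbb{Z}$ (with its usual order) and the free group $F$ on $\{b,c\}$ (which is even bi-orderable, by a classical theorem of Magnus) are right-orderable. Given right-orderings $\leq_F$ on $F$ and $\leq_\mathbb{Z}$ on $\mathbb{Z}$, I would define the lexicographic order on $G = F \rtimes_\alpha \mathbb{Z}$ by declaring $(f_1, n_1) < (f_2, n_2)$ iff $n_1 <_\mathbb{Z} n_2$, or $n_1 = n_2$ and $f_1 <_F f_2$. Right-invariance is routine: right multiplication by $(f, n)$ sends $(f_i, n_i)$ to $(f_i\, \alpha^{n_i}(f), n_i + n)$, and one uses right-invariance of $\leq_\mathbb{Z}$ in the first case and right-invariance of $\leq_F$ (noting $\alpha^{n_1}(f) = \alpha^{n_2}(f)$ when $n_1 = n_2$) in the second.

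For~\ref{exm-g:bf}, \cref{l:ordered} already supplies that $K[G]$ is a domain with $K[G]^\times = K^\times G$. The decomposition $G = F \rtimes_\alpha \mathbb{Z}$ exhibits $K[G]$ as the skew Laurent polynomial ring $K[F][a^{\pm 1}; \widetilde{\alpha}]$, where $\widetilde{\alpha}$ is the $K$-algebra automorphism of $K[F]$ induced by the group automorphism $\alpha$ of $F$. Since $\widetilde{\alpha}$ is an automorphism, \cref{p:skew}\ref{skew:laurent} reduces the claim to showing that $K[F]$ is a BF-domain.

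The main obstacle is thus establishing that $K[F]$ is a BF-domain. I would appeal to Cohn's theory of free ideal rings: $K[\mathbb{Z}]$ is a PID, hence a fir, and Cohn's theorem asserts that the coproduct (over a common subfield) of firs is again a fir, so that $K[F] = K[\mathbb{Z}] *_K K[\mathbb{Z}]$ is a fir. Cohn's unique factorization theorem for firs then yields atomic factorizations that are unique up to order and similarity, and in particular of bounded length, giving the BF property. A more elementary approach via the naive support-size length function $\lambda(f) = |\operatorname{supp}(f)| - 1$ is unavailable, since in the commutative subring $K[\mathbb{Z}]$ the identity $(1+x)(1-x) = 1 - x^2$ already shows that support size fails to be superadditive, forcing one to use the stronger fir machinery (or a suitable replacement) to handle $K[F]$.
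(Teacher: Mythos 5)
Your proposal is correct and follows essentially the same route as the paper: right-orderability of $G$ via the extension/semidirect product of the right-orderable groups $F$ and $\bZ$ (the paper cites Passman's lemma where you verify the lexicographic order by hand), and the BF property via Cohn's fir theory for $K[F]$ (giving atomic factorizations of bounded, indeed equal, length) combined with \cref{p:skew}\ref{skew:laurent} for the skew Laurent extension $K[G]=K[F][a^{\pm1};\widetilde\alpha]$. The only cosmetic difference is that you obtain the fir property of $K[F]$ from the coproduct theorem, whereas the paper cites Cohn's result on free group algebras directly.
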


\begin{proof}
  \ref{exm-g:order}
  Since both $F$ and $\bZ$ are right orderable (indeed, both of them are even orderable), the semidirect product $G$ is right orderable as well \cite[Lemma 13.1.5]{passman77}.

  \ref{exm-g:bf}
  $K[F]$ is a free ideal ring by a result of P.~M.~Cohn \cite[Corollary 7.11.8]{cohn06}.
  By \cite[Proposition 3.2.9]{cohn06}, this implies that $K[F]$ is atomic and that a strong uniqueness property holds for the factorizations of an element $x \in K[F]$: given any two factorizations of $x$, it is possible to pass from one to the other using a series of comaximal transpositions (essentially an application of Jordan-Hölder to $[aR,R]$, which is a modular lattice of finite length thanks to $R$ being a free ideal ring; see the discussion preceding \cite[Proposition 3.2.9]{cohn06}).
  In particular, the length of two factorizations of $a$ is therefore the same.
  Thus, $K[F]$ is half-factorial and, in particular, has BF.
  Since $K[G]$ is a skew Laurent polynomial ring over $K[F]$, also the ring $K[G]$ has BF by \cref{p:skew}.
\end{proof}

Our next step is to exhibit an explicit presentation of the submonoid $S$ of $G$ in terms of generators and relations.

\begin{lemma} \label{l:ex-grp}
  $S$ is isomorphic to $\langle a,b \mid ba^2b = a^2,\ a^4b=ba^4 \rangle$.
\end{lemma}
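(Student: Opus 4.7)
The plan is to prove the lemma by showing that the natural surjective monoid homomorphism
$\varphi\colon T \twoheadrightarrow S$, where $T = \langle a, b \mid ba^2 b = a^2,\ a^4 b = ba^4 \rangle$ is the abstract monoid with the displayed presentation, is also injective. First I would check that both defining relations hold in $S \subseteq G$, using $\alpha^2(b) = b^{-1}$ and $\alpha^4 = \id$: one computes directly that $ba^2 b = (b \cdot \alpha^2(b), 2) = (1, 2) = a^2$ and $a^4 b = (\alpha^4(b), 4) = (b, 4) = ba^4$, so $\varphi$ is well-defined (and obviously surjective).

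Next I would introduce the string rewriting system on $\{a, b\}^*$ obtained by orienting the relations as $R_1\colon ba^2 b \to a^2$ and $R_2\colon a^4 b \to ba^4$. Termination is established via the lexicographic well-ordering on (total length, number of inversions), where an inversion is a pair $i < j$ with $w_i = a$ and $w_j = b$: $R_1$ strictly shortens words, and $R_2$ preserves length but strictly reduces the inversion count.

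The crux is to verify that distinct irreducible words have distinct images under $\varphi$. Writing an irreducible word uniquely in block form $w = b^{i_0} a b^{i_1} a b^{i_2} \cdots a b^{i_n}$ with $i_j \ge 0$, its image is
\[
\varphi(w) = \Bigl(\,\prod_{j=0}^n \alpha^j(b)^{i_j},\ n\,\Bigr) \in F \rtimes_\alpha \bZ,
\]
so $n$ is recovered as the $\bZ$-coordinate. Let $j_1 < \cdots < j_s$ be the indices in $\{0, \ldots, n\}$ with $i_{j_k} \ne 0$. I claim that irreducibility forces $j_1 \le 3$ and $j_{k+1} - j_k \in \{1, 3\}$ for each $k$: a gap of $2$ is excluded by $R_1$-irreducibility applied at index $j_k$, a gap of $4$ by a single application of $R_2$-irreducibility at $m = j_k + 1$, and any larger gap by iterating $R_2$-irreducibility to successively force $i_{j_k+4} = i_{j_k+5} = \cdots = i_{j_{k+1}} = 0$, contradicting $i_{j_{k+1}} \ne 0$; the bound $j_1 \le 3$ is obtained analogously at the beginning of the word. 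Since every gap is odd, the $j_k$ alternate in parity, so consecutive nonzero factors $\alpha^{j_k}(b)^{i_{j_k}}$ and $\alpha^{j_{k+1}}(b)^{i_{j_{k+1}}}$ lie in different free factors of $F = \langle b \rangle * \langle c \rangle$, and hence the whole product is already freely reduced in $F$. From this reduced word one reads off the residues $j_k \bmod 4$ together with the exponents $i_{j_k}$; combined with $j_1 \in \{0, 1, 2, 3\}$ and $j_{k+1} - j_k \in \{1, 3\}$, this recovers the full sequence $(i_0, \ldots, i_n)$ (padded with zeros up to index $n$).

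With this in hand, confluence of the rewriting system is automatic: two irreducible words with the same image under $\varphi$ must coincide. Hence every element of $T$ has a unique irreducible normal form, and $\varphi$ is injective, which proves the lemma. The main obstacle is the combinatorial analysis of irreducible gap patterns together with the observation that the resulting product in $F$ is in reduced form; in particular the iteration of $R_2$-irreducibility to rule out large even gaps is the most delicate point.
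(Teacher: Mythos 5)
Your proposal is correct and follows essentially the same route as the paper: both reduce an arbitrary word to the same canonical form using the two relations (your irreducible words are exactly the paper's normal form, with $j_1\le 3$ and gaps in $\{1,3\}$ matching the conditions on the $m_i$, $n_i$) and then show distinct normal forms have distinct images in $G = F \rtimes_\alpha \bZ$ by reading the block data off the freely reduced word in $b$, $c$ together with the exponent of $a$. Your explicit termination order and the direct semidirect-product/free-product-syllable computation are just a slightly more formal packaging of the paper's reduction procedure and its case analysis when pushing the $a$'s to the right.
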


\begin{proof}
  One checks immediately that $ba^2b=a^2$ and $a^4b=ba^4$ hold in $G$.
  We claim that, using these two relations, any word $x$ in $a$,~$b$ can be reduced to the form
  \begin{equation} \label{eq:nf}
    x = b^{m_0}a^{n_1}b^{m_1}a^{n_2}b^{m_2} \cdots a^{n_k}b^{m_k} a^n,
  \end{equation}
  with $k \ge 0$, $m_0 \ge 0$, $m_1$,~$\ldots\,$,~$m_k > 0$, $n \ge 0$, $n_2$, $\ldots\,$,~$n_k \in \{1,3\}$ and $n_1 \in \{1,2,3\}$.
  Further, if $n_1=2$ we can assume $m_0=0$.
  
  Indeed, a priori $x$ has such a form with $m_0$,~$n \ge 0$, $k \ge 0$, $m_1$,~$\ldots\,$,~$m_k >0$ and $n_1$, $\ldots\,$,~$n_k > 0$.
  Moving fourth powers of $a$ to the right using $a^4b=ba^4$, we may assume $n_i \in \{1,2,3\}$ for all $i \in \{1, \ldots,k\}$.
  If $i \ge 2$ and $n_i=2$ we may successively use $ba^2b=a^2$ to reduce $m_{i-1}$ and $m_{i}$ until one of them becomes zero, at which point we obtain a representation with smaller $k$ and continue inductively.
  This leaves only $n_1$ to deal with.
  If $m_0 \ge m_1$, we can again merge $a^{n_1}$ with $a^{n_2}$.
  So if $n_1=2$ we can assume $m_0<m_1$ and then reduce to $m_0=0$.

  We now claim that any two words of the form above with distinct parameters yield distinct elements of $G$.
  To deduce this, we reduce a word in the form \eqref{eq:nf} to the normal form in $G$ in which all $a$'s are on the right.
  Note that $ab=ca$, $ac=b^{-1}a$, $a^3b = c^{-1}a^3$, and $a^3c = b a^3$.

  Suppose first $n_1 \ne 2$.
  Let $\varepsilon_i = 1$ if $n_i=1$ and $\varepsilon_i = -1$ if $n_i = 3$ for $1 \le i \le k$.
  Then $a^{n_i}b^m = c^{\varepsilon_i m} a^{n_i}$ and $a^{n_i}c^m = b^{-\varepsilon_i m} a^{n_i}$ for all $1 \le i \le k$ and $m \ge 0$.
  Inductively, one obtains
  \[
    a^{n_1+\cdots + n_i} b^m =
    \begin{cases}
      b^{\varepsilon_1\cdots \varepsilon_i m} a^{n_1+\cdots+n_i}  & \text{if $i \equiv 0 \mod 4$,} \\
      c^{\varepsilon_1\cdots \varepsilon_i m} a^{n_1+\cdots+n_i}  & \text{if $i \equiv 1 \mod 4$,} \\
      b^{-\varepsilon_1\cdots \varepsilon_i m} a^{n_1+\cdots+n_i}  & \text{if $i \equiv 2 \mod 4$,} \\
      c^{-\varepsilon_1\cdots \varepsilon_i m} a^{n_1+\cdots+n_i}  & \text{if $i \equiv 3 \mod 4$.} \\
    \end{cases}
  \]
  Computing in $G$, we thus find
  \[
    \overline{x} = b^{m_0} c^{\varepsilon_1 m_1} (b^{-1})^{\varepsilon_1 \varepsilon_2 m_2}(c^{-1})^{\varepsilon_1 \varepsilon_2 \varepsilon_3 m_3}b^{\varepsilon_1\varepsilon_2\varepsilon_3\varepsilon_4 m_4} \cdots e^{\varepsilon_1 \cdots \varepsilon_{k-1} m_{k-1}} d^{\varepsilon_1 \cdots \varepsilon_k m_k} a^{n_1 + \cdots + n_k+n},
  \]
  where $(e,d) \in \{ (c^{-1},b), (b,c), (c,b^{-1}), (b^{-1},c^{-1}) \}$ according to the congruence class of $k$ modulo $4$.
  Note that from the word in $b$,~$c$, working our way from left to right, we can read off all $m_i$'s and $\varepsilon_i$'s, and therefore also the $n_i$'s.
  From the exponent of $a$ we can then compute $n$.

  Suppose now $n_1 = 2$. Then $m_0=0$.
  Let $\varepsilon_i = 1$ if $n_i=1$ and $\varepsilon_i = -1$ if $n_i = 3$ for $2 \le i \le k$.
  We use $a^2b = b^{-1}a^2$ and $a^2 c = c^{-1} a^2$ together with the computation from the first case to find
  \[
    \begin{split}
    \overline{x} &= a^2 b^{m_1}c^{\varepsilon_2 m_2}(b^{-1})^{\varepsilon_2\varepsilon_3m_3} \cdots e^{\varepsilon_2 \cdots \varepsilon_{k-1} m_{k-1}} d^{\varepsilon_2 \cdots \varepsilon_k m_k} a^{n_2 + \cdots + n_k+n}\\
    &= b^{-m_1}c^{-\varepsilon_2 m_2}(b^{-1})^{-\varepsilon_2\varepsilon_3m_3} \cdots e^{-\varepsilon_2 \cdots \varepsilon_{k-1} m_{k-1}} d^{-\varepsilon_2 \cdots \varepsilon_k m_k} a^{2 + n_2 + \cdots + n_k+n} \in G,
    \end{split}
  \]
  where again the values of $d$ and $e$ depend on $k-1$ modulo $4$.
  Note that this case is distinguishable from the previous one because the exponent of the left-most $b$ is negative.
  As before, the $m_i$'s and $n_i$'s can be recovered unambiguously from the representation.
\end{proof}

\begin{lemma} \label{l:infinite-b}
  \begin{enumerate}
  \item \label{infb:monomial}
    If $x \in \bigcap_{n \ge 0} Sb^n$, then there exist $i \ge 0$ and $x_i \in S$ such that $x=x_ia^2b^i$.
    If $x \in \bigcap_{n\ge 0} b^nS$, then there exist $i \ge 0$ and $x_i \in S$ such that $x=b^ia^2x_i$.
  \item \label{infb:polynomial}
  
  Let $f \in K[S]$.
  If $f \in \bigcap_{n \ge 1} K[S]b^n$, then there exist $m \ge 0$ and $g_m \in K[S]$ such that $f=g_ma^2b^m$.
  If $f \in \bigcap_{n \ge 1} b^n K[S]$, then there exist $m \ge 0$ and $g_m \in K[S]$ such that $f=b^ma^2g_m$.
  \end{enumerate}
\end{lemma}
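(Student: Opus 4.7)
The plan is to work in the group $G = F \rtimes_\alpha \bZ$ introduced before the lemma. From the proof of \cref{l:ex-grp}, each $x \in S$ corresponds to a pair $(x_F, k)$ with $k \in \bZ_{\geq 0}$ and $x_F$ in the ``staircase'' subset
\[
  W_k = \{\, b^{a_0} c^{a_1} (b^{-1})^{a_2} (c^{-1})^{a_3} \cdots \alpha^k(b)^{a_k} : a_j \in \bZ_{\geq 0} \,\} \subseteq F,
\]
where the letter types cycle through $b, c, b^{-1}, c^{-1}$ with period $4$. Because consecutive letters $\alpha^j(b), \alpha^{j+1}(b)$ are distinct and non-inverse generators of $F$, this expression is reduced in $F$, and the tuple $(a_0, \ldots, a_k)$ is uniquely determined by $x$.

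For part (i), suppose $x = (x_F, k) \in \bigcap_n Sb^n$, so that $x_F \cdot \alpha^k(b^{-N}) \in W_k$ for every $N \geq 1$. In the low cases $k = 0$ and $k = 1$ the appended letter $\alpha^k(b^{-N})$ is of inverse type with no admissible staircase position in $[0, k]$, yielding a contradiction once $N$ is sufficiently large. For $k \geq 2$, set $L = \alpha^k(b)$, so $L^{-1} = \alpha^{k-2}(b)$. When $N > a_k$ the reduced form of $x_F \cdot L^{-N}$ ends in $(L^{-1})^{N - a_k}$; this tail can be placed at staircase position $k - 2$ only when the block at position $k - 1$ is trivial, so requiring membership for all $N$ forces $a_{k-1} = 0$. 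Setting $w_F = \prod_{j=0}^{k-2} \alpha^j(b^{a_j})$ and taking $x_i \in S$ to correspond to $(w_F, k-2)$, a direct computation in $G$ yields $x_i a^2 b^{a_k} = (w_F \cdot \alpha^k(b^{a_k}), k) = (x_F, k) = x$, giving the factorization with $i = a_k$. The reverse implication of (i), together with the analogous left-sided statement (via the word-reversing anti-automorphism of $S$, which preserves both defining relations $b a^2 b = a^2$ and $a^4 b = b a^4$), follow from the identity $a^2 = b^j a^2 b^j$ for all $j \geq 0$, obtained by iterating $b a^2 b = a^2$.

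For part (ii), writing $f = h_N b^N$ for each $N$ forces $\supp(f) \subseteq \supp(h_N) b^N \subseteq S b^N$, so every $s \in \supp(f)$ lies in $\bigcap_N Sb^N$; by (i), each such $s$ factors as $s = w_s a^2 b^{i_s}$. Choosing $m \geq \max_s i_s$ and invoking $a^2 b^{i_s} = b^{m - i_s} a^2 b^m$ (a further consequence of $a^2 = b^{m - i_s} a^2 b^{m - i_s}$), rewrite each $s = (w_s b^{m - i_s}) a^2 b^m$; writing $f = \sum_s c_s s$ with $c_s \in K$ then gives $f = g_m a^2 b^m$ with $g_m = \sum_s c_s w_s b^{m - i_s} \in K[S]$. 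The principal obstacle is the staircase analysis in $F$ for part (i), namely showing that $a_{k-1} = 0$ is forced; once that structural fact is in hand, extracting the factorization $x = x_i a^2 b^i$ and lifting to $K[S]$ in (ii) are routine.
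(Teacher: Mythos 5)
Your part (ii) and the reduction of the left-sided statements to the right-sided ones via the word-reversing anti-automorphism are fine and essentially coincide with the paper's argument, and your overall plan for (i) — pass to the coordinates of $G=F\rtimes_\alpha\bZ$ and show that divisibility by all powers of $b$ forces the $F$-component of $x$ to lie in $W_{k-2}\,\alpha^k(b)^{\ge 0}$ — is the right idea. However, there is a genuine gap at the heart of (i): your claim that the staircase expression is reduced in $F$ and that the tuple $(a_0,\dots,a_k)$ is uniquely determined by $x$ is false. You only compared letters at adjacent positions, but $\alpha^{j+2}(b)=\alpha^{j}(b)^{-1}$ and $\alpha^{j+4}(b)=\alpha^{j}(b)$, so as soon as intermediate exponents $a_j$ vanish, non-adjacent blocks become adjacent in the word and cancel or merge. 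Indeed the defining relation $ba^2b=a^2$ is exactly such a cancellation (the tuple $(1,0,1)$ collapses to the trivial word), and $b\cdot(b^{-1})^5=b^{-4}$ shows that $W_2$ already contains elements whose reduced words are not increasing staircases.

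Because of this, the quantities $a_k$ and $a_{k-1}$ are not well defined, the assertion that the reduced form of $x_F L^{-N}$ ends in $(L^{-1})^{N-a_k}$ need not hold (further merging or cancellation with the blocks at positions $k-2$, $k-4,\dots$ can occur), and the key deduction ``membership in $W_k$ for all $N$ forces $a_{k-1}=0$'' — which is indeed all you need, since the desired factorization $x=x_ia^2b^i$ is equivalent to $x_F$ admitting \emph{some} staircase representation with $a_{k-1}=0$ — is left unproved. This is precisely the point that the paper's normal form is designed to settle: in \cref{l:ex-grp} the position gaps are restricted to $\{1,3\}$ (with the gap-$2$ case normalized to $m_0=0$ using $ba^2b=a^2$), so that the associated word in $F$ is genuinely reduced with nonzero exponents and the parameters are unique; the paper's proof of \cref{l:infinite-b}\ref{infb:monomial} then simply compares normal forms of $x=b^ny$ for $n>m_0$ (working with $\bigcap_n b^nS$, where this normalization is most convenient, and transferring by symmetry). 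To repair your argument you would have to either re-establish such a normal form or correctly characterize which reduced words of $F$ lie in $W_k$ and how appending $\alpha^k(b)^{-N}$ interacts with them — that is, essentially the combinatorial content you skipped over.
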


\begin{proof}
  \ref{infb:monomial} By symmetry it suffices to show one of the claims.
  We shall use the normal form established in the proof of \cref{l:ex-grp}.
  Due to the choices made there, it is easier to prove the claim for right ideals.
  So let $x \in \bigcap_{n \ge 0} b^n S$, with normal form and notation as in \eqref{eq:nf}.
  For sake of contradiction, assume $x$ is not of the form $b^ia^2 x_i$ with $i \ge 0$ and $x_i \in S$.
  In the normal form for $x$, we must then have $n_1 \in \{1,3\}$.
  Suppose $x = b^ny$ for some $n > m_0$ and $y$ in $S$.
  Writing $y$ in normal form, we have
  \[
    y = b^{\mu_0}a^{\nu_1}b^{\mu_1}a^{\nu_2} b^{\mu_2}\cdots a^{\nu_l}b^{\mu_l}a^{\nu},
  \]
  with $l \ge 0$, $\mu_0 \ge 0$, $\mu_1$, $\ldots\,$,~$\mu_l > 0$, $\nu \ge 0$, $\nu_2$, $\ldots\,$,~$\nu_l \in \{1,3\}$ and $\nu_1 \in \{1,2,3\}$.
  First of all, note that $\nu_1=2$ leads to a contradiction to our choice of $x$, so that $\nu_1 \in \{1,3\}$.
  However, now $b^ny$, with $y$ expressed in its normal form, is already in normal form.
  But $x=b^ny$ and, comparing the normal forms, $m_0=n+\mu_0$ yields a contradiction to $n > m_0$.

  \ref{infb:polynomial}
  By symmetry, it suffices to prove the first claim.
  If $f \in K[S] b^n$, then each monomial in $\supp(f)$ is in $S b^n$.
  Let $f \in \bigcap_{n \ge 0} K[S] b^n$.
  By \ref{infb:monomial}, we may write
  \[
    f = \sum_{j=0}^k \lambda_{i_j} x_{i_j} a^2 b^{i_j},
  \]
  with suitable $k \ge 0$, $i_j \ge 0$, $x_{i_j} \in S$, and $\lambda_{i_j} \in K$.
  Choose $m \ge \max\{i_1,\ldots,i_k\}$.
  Then, by using $a^2=ba^2b$,
  \[ 
  f = \sum_{j=0}^k \lambda_{i_j} x_{i_j} b^{m-i_j} a^2 b^m = \Big( \sum_{j=0}^k \lambda_{i_j} x_{i_j} b^{m-{i_j}}\Big) a^2 b^m. \qedhere
  \]
\end{proof}

For $0 \ne f \in K[G]$ we let $\deg_a(f) \in \bZ$ be the maximal $a$-degree of any monomial in the support of $f$, and we set $\deg_a(f)=-\infty$ if $f=0$.
This is a well-defined degree function, since $K[G]$ may be viewed as a skew Laurent polynomial ring in the indeterminate $a$ over $K[F]$.
Observe $\deg_a(f) \in \bZ_{\ge 0} \cup \{-\infty\}$ for all $f \in K[S]$.

\begin{lemma}
  $K[S]$ is atomic.
\end{lemma}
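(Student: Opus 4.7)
The approach is to induct on $\deg_a(f)$. The base case $\deg_a(f) = 0$ is straightforward: then $f \in K[b]$, a commutative polynomial ring in one variable and hence a PID, so $f$ factors into $K[b]$-irreducibles. Any such irreducible $p$ is an atom of $K[S]$, because a factorization $p = gh$ in $K[S]$ forces $\deg_a(g) = \deg_a(h) = 0$, hence $g, h \in K[b]$, and irreducibility of $p$ in $K[b]$ then forces $g$ or $h$ into $K^\times = K[S]^\times$.

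For the inductive step with $\deg_a(f) \geq 1$, I would first treat the case $f \in \bigcap_{n \geq 1} K[S] b^n$ or $f \in \bigcap_{n \geq 1} b^n K[S]$. Here \cref{l:infinite-b} supplies a decomposition $f = g_m a^2 b^m$ (respectively $f = b^m a^2 g_m$) for some $m \geq 0$, $g_m \in K[S]$. Since $\deg_a(g_m) = \deg_a(f) - 2 < \deg_a(f)$, the induction hypothesis factors $g_m$ into atoms; together with the atoms $a$ and $b$ (whose atomicity in $K[S]$ one checks directly by inspecting the $a^0$- and $a^1$-coefficients of any hypothetical factorization) this produces a factorization of $f$ into atoms.

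Otherwise, I would iteratively peel $K[b]$-atoms off either side of $f$. At each stage, consider a nontrivial factorization of the current element $f' = gh$: if $h \in K[b] \setminus K^\times$ (resp.\ $g \in K[b] \setminus K^\times$), decompose $h$ (resp.\ $g$) into atoms of the PID $K[b]$, record them, and continue the process with $g$ (resp.\ $h$); if $\deg_a(g), \deg_a(h) \geq 1$, stop and apply the induction hypothesis to both factors (each has strictly smaller $\deg_a$); and if $f'$ is an atom, stop.

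The main obstacle is to show this peeling terminates. Assume for contradiction that it does not; one obtains $f = L_N \cdot f_N \cdot R_N$ with $L_N, R_N \in K[b]$ accumulated products of nonunits, and $\deg_b(L_N) + \deg_b(R_N) \to \infty$. After passing to a subsequence one may assume $\deg_b(R_N) \to \infty$; write $R_N = b^{v_N} \tilde R_N$ with $\tilde R_N(0) \neq 0$. If $v_N$ is unbounded then $f \in \bigcap_n K[S] b^n$, contradicting the case. Otherwise $\deg_b(\tilde R_N) \to \infty$, and $f = g_N \tilde R_N$ for some $g_N \in K[S]$. Expanding $f = \sum f_i a^i$ and $g_N = \sum g_{N,i} a^i$ in $K[F][a; \sigma]$-form yields $f_i = g_{N,i} \cdot \sigma^i(\tilde R_N)$ in $K[F]$ for each $i$. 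Using a right-ordering of the free group $F$ with $b, c > 1$ (available since free groups are bi-orderable), leading and trailing terms in $K[F]$ are multiplicative; since $\sigma^i(b) \in \{b, c, b^{-1}, c^{-1}\}$ and $\sigma^i(\tilde R_N) \in K[\sigma^i(b)]$ has support including both $\sigma^i(b)^0$ and $\sigma^i(b)^{\deg_b \tilde R_N}$ (because $\tilde R_N(0) \neq 0$), comparing $\min \supp(f_i)$ with $\max \supp(f_i)$ leads, whenever $f_i \neq 0$, to an inequality of the form $\min \supp(f_i) \cdot \sigma^i(b)^{\deg_b \tilde R_N} \leq \max \supp(f_i)$ in $F$, which fails for $N$ sufficiently large. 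Thus every $f_i = 0$ and hence $f = 0$, the desired contradiction.
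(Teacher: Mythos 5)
Your overall route is genuinely different from the paper's: you force termination of the ``peeling'' of $K[b]$-factors by an explicit support computation inside $K[F]$, whereas the paper forces it by maximality of principal ideals in $K[G]$, using that $K[G]$ is a BF-domain and hence satisfies the ACCP (\cref{l:exm-g}), together with the same use of \cref{l:infinite-b} for the $\bigcap$-cases. The induction on $\deg_a$, the base case, the handling of $f\in\bigcap_n K[S]b^n$, and the reduction $f_i=g_{N,i}\,\sigma^i(\tilde R_N)$ in $K[F]$ are all fine. The genuine gap is the very last step. From multiplicativity of extreme support terms (which, note, requires a two-sided invariant order, not merely a right order) you correctly get that $t^{d_N}$, where $t\in\{b,c\}$ is the positive one among $\sigma^i(b)^{\pm1}$ and $d_N=\deg_b\tilde R_N\to\infty$, is bounded above by the fixed element $\min\supp(f_i)^{-1}\max\supp(f_i)$ of $F$. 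You then assert this ``fails for $N$ sufficiently large''. That does not follow for an arbitrary bi-ordering of $F$ with $b,c>1$: bi-ordered groups need not be Archimedean, and there are bi-orderings of the free group (for instance the Magnus ordering, comparing coefficients of monomials ordered by degree and then lexicographically with $X$ before $Y$) in which $c^n<b$ for every $n\ge 1$ although $b,c>1$. Since $\min\supp(f_i)^{-1}\max\supp(f_i)$ can be an essentially arbitrary element of $F$ (the supports of the $f_i$ involve mixed words in $b^{\pm1},c^{\pm1}$), the inequality need not ever fail, and the contradiction evaporates.

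The gap is repairable. One fix is to choose the ordering so that powers of $b$ and of $c$ dominate every fixed element: order $F$ first by the homomorphism $w\mapsto e_b(w)+\sqrt{2}\,e_c(w)\in\bR$ given by exponent sums, refining it on the kernel $[F,F]$ by a conjugation-invariant bi-ordering (e.g.\ via the Magnus expansion); then $b^n>w$ and $c^n>w$ for all large $n$ and any fixed $w$, and your argument closes. A cleaner fix avoids orderings altogether: write $K[F]=\bigoplus_j y_jK[\langle t\rangle]$ as a free right module over $K[\langle t\rangle]\cong K[t^{\pm1}]$ along a transversal of the left cosets of $\langle t\rangle$; the relation $f_i=g_{N,i}\,\sigma^i(\tilde R_N)$ splits into componentwise equations in $K[t^{\pm1}]$, and since $\sigma^i(\tilde R_N)$ has width $d_N$ (nonzero constant and top coefficient) while a fixed nonzero component of $f_i$ has bounded width, you reach the contradiction directly. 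With either repair, and after spelling out the symmetric case $\deg_b(L_N)\to\infty$ (via the anti-automorphism of $S$ fixing $a$ and $b$, or the analogous untwisted left-coefficient computation), your proof works and is a legitimate alternative to the paper's ACCP-based termination argument.
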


\begin{proof}
  Let us say $f \in K[S]^\bullet$ is \defit{atomic} if $f$ can be represented as a product of atoms or if $f$ is a unit.
  Since $K[b] \subseteq K[S]$ is a divisor-closed submonoid, and $K[b]$ is a polynomial ring, every $f \in K[b]^\bullet$ is atomic in $K[S]$.
  We first show:
  \begin{enumerate}[leftmargin=2cm]
  \item[\textbf{Claim A.}] for every nonunit $f \in K[S]^\bullet$, there exists an atomic nonunit $g \in K[S]^\bullet$ such that $f \in gK[S]$.
  \end{enumerate}
  Suppose that this is not the case, and let $m \in \bZ_{> 0}$ be the minimal $a$-degree among all counterexamples.
  Let $\Omega \subseteq K[S]$ be the set of all $f \in K[S]^\bullet \setminus K[S]^\times$ with $\deg_a(f)=m$ and such that $f$ does not have a nonunit atomic left factor.
  Since $K[G]$ has BF (by \cref{l:exm-g}\ref{exm-g:bf}), it satisfies the ascending chain condition on principal right ideals.
  Hence $\{\, fK[G] : f \in \Omega \,\}$  has a maximal element $fK[G]$ with $f \in \Omega$.

  If $f \in \bigcap_{n \ge 1} K[S] b^n$, then $f = ga^2b^l$ with $g \in K[S]$ and $l \ge 0$ by \cref{l:infinite-b}.
  Then $\deg_a(g) < m$. If $g$ is a nonunit of $K[S]$, then $g$ has a nonunit atomic left factor by choice of $m$.
  If $g$ is a unit, then $a$ is a nonunit atomic left factor of $f$ (note that $a \in K[S]$ is an atom). In either case we arrive at a contradiction to our assumption on $f$.
  Thus there exists a maximal $n \ge 0$ such that $f=f'b^n$ with $f' \in K[S]$.
  Then $fK[G]=f'K[G]$.
  Replacing $f$ by $f'$ we may assume $f \not \in K[S]b$.
 
  By construction, the element $f$ cannot be an atom, and thus $f=gh$ with $g$,~$h \in K[S]^\bullet \setminus K[S]^\times$.
  If $\deg_a(g)$,~$\deg_a(h) < m$, then we obtain a nonunit atomic left factor of $f$ from $g$ (if $g \not \in K[S]^\times$) or $h$ (if $g \in K[S]^\times$).
  Thus either $\deg_a(g)=m$ or $\deg_a(h)=m$.

  Suppose first $\deg_a(h)=m$.
  Then $\deg_a(g)=0$ implies $g \in K[b]$.
  Therefore $g$ is atomic in $K[S]$, a contradiction.

  Let now $\deg_a(g)=m$.
  Since $g$ is not atomic, the maximality of $fK[G]$ implies $fK[G]=gK[G]$.
  Thus $h \in K[G]^\times \cap K[b]$, and therefore $h = \lambda b^n$ with $\lambda \in K^\times$ and $n \ge 1$.
  But this contradicts $f \not \in K[S]b$.

  \smallskip
  Having shown \textbf{Claim A}, we can now show that $K[S]$ is atomic.
  Assume that this is not the case, and let $m \ge 0$ be the smallest $a$-degree among non-atomic elements of $K[S]^\bullet$.
  Then
  \[
    \Omega = \{\, K[G] f : f \in K[S] \text{ is not atomic and $\deg_a(f)=m$} \,\}
  \]
  has a maximal element $K[G]f$.
  If $f \in \bigcap_{n \ge 1} b^n K[S]$, then $f=b^ma^2g$ for some $m \ge 0$ and $g \in K[S]$.
  Since $\deg_a(g) < \deg_a(f)$, then $g$ is atomic, and so is $f$.
  Thus there exists again a maximal $n \ge 0$ such that $f = b^n f'$ with $f' \in K[S]$, and we replace $f$ by $f'$ to obtain $f \not \in b K[S]$.
  By the claim, there exists an atomic nonunit $g \in K[S]^\bullet$ such that $f=gh$ with $h \in K[S]$.
  Then $h$ cannot be atomic and so $\deg_a(h)=m$.
  Maximality of $K[G]f$ gives $g \in K[G]^\times \cap K[b]$, and thus $g=\lambda b^n$ with $n\ge 0$.
  But then $n=0$ contradicts $g$ being a nonunit.
\end{proof}

\begin{proposition}
  $K[S]=K\langle a,b \mid ba^2b=a^2,\ a^4b=ba^4 \rangle$ is a finitely presented atomic domain that does not satisfy the ACC on principal right \textup{[}left\textup{]} ideals.
  In particular, the domain $K[S]$ does not have BF.
\end{proposition}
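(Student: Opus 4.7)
The four things to verify are: finitely presented, domain, atomic, and failure of ACCP (whence failure of BF). Atomicity is the preceding lemma, and the domain property follows since $G$ is right orderable by \cref{l:exm-g}\ref{exm-g:order} (hence so is the submonoid $S$), and then \cref{l:ordered}\ref{ordered:domain} applies. The finite presentation is immediate from \cref{l:ex-grp}, which yields the isomorphism $K[S] \cong K\langle a, b \mid ba^2 b = a^2,\ a^4 b = ba^4 \rangle$ of $K$-algebras.

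The substantive task is to exhibit strictly ascending chains of principal right and left ideals. The key observation is that iterating the relation $a^2 = ba^2 b$ gives $a^2 = b^n a^2 b^n$ for every $n \ge 0$. I would then consider
\[
  a^2 K[S] \subseteq b a^2 K[S] \subseteq b^2 a^2 K[S] \subseteq \cdots,
\]
where each inclusion holds because $b^n a^2 = b^{n+1} a^2 b \in b^{n+1} a^2 K[S]$. To establish strict inclusion I would argue by contradiction: assuming $b^{n+1} a^2 = b^n a^2 g$ for some $g \in K[S]$, cancellation inside $K[G]$ (which is a domain) forces $g = a^{-2} b a^2$, which equals $b^{-1}$ in $G$ by the relation $a^2 = ba^2 b$ (equivalently, $ba^2 = a^2 b^{-1}$). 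Since $b^{-1} \notin S$ and elements of $K[G]$ have unique expansions in the group basis, no element of $K[S]$ can represent $b^{-1}$ in $K[G]$, a contradiction. An entirely analogous computation with the chain $K[S] a^2 \subsetneq K[S] a^2 b \subsetneq K[S] a^2 b^2 \subsetneq \cdots$ handles the left-ideal case.

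For the ``in particular'' clause, I would observe that in an atomic cancellative setting a strictly ascending chain $a_0 K[S] \subsetneq a_1 K[S] \subsetneq \cdots$ produces factorizations $a_0 = a_n c_n \cdots c_1$ in which $a_n$ and each $c_i$ is a nonunit (by cancellativity and strictness); since $K[S]$ is atomic, each factor is a product of at least one atom, forcing $\sup \sL(a_0) = \infty$. Alternatively, the failure of BF can be shown directly: $K[S]^\times = K^\times$ by \cref{l:ordered}\ref{ordered:units} (since $S^\times = \{1\}$, as visible from the projection $G = F \rtimes \bZ \to \bZ$ sending $S$ into $\bZ_{\ge 0}$), so $a$ and $b$ are nonunits and atomicity yields factorizations $a = \alpha_1 \cdots \alpha_p$ and $b = \beta_1 \cdots \beta_q$ with $p$,~$q \ge 1$; then the identity $a^2 = b^n a^2 b^n$ produces factorizations of $a^2$ of length $2nq + 2p$ for every $n$.

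The main obstacle I anticipate is the strict inclusion verification, specifically the explicit computation $a^{-2} b a^2 = b^{-1}$ in $G$ combined with the observation that $b^{-1} \notin S$, which is what rules out a $K$-linear combination of elements of $S$ equaling $b^{-1}$ in $K[G]$.
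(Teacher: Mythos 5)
Your proposal is correct and follows essentially the same route as the paper: the identical chain $a^2K[S] \subseteq ba^2K[S] \subseteq b^2a^2K[S] \subseteq \cdots$ (and its mirror image for left ideals), with strictness proved by contradiction, the only difference being that the paper rules out $b^la^2 = a^2b^m$ via the normal form of \cref{l:ex-grp}, while you cancel in the group algebra $K[G]$ to force $g = a^{-2}ba^2 = b^{-1}$ and invoke $b^{-1}\notin S$ --- a cosmetic variation, since $b^{-1}\notin S$ is itself most quickly checked from that same normal form (or from the $a$-degree together with freeness of $F$). The ``in particular'' clause is likewise handled as in the paper (failure of ACCP precludes BF, plus a direct unbounded-length-set argument from $a^2 = b^na^2b^n$).
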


\begin{proof}
  We have already established that $K[S]$ is a finitely presented atomic domain.
  
  Consider the chain of principal right ideals
  \begin{equation} \label{eq:asc}
    a^2K[S] \subseteq ba^2K[S] \subseteq b^2 a^2 K[S] \subseteq \cdots \subseteq b^k a^2 K[S] \subseteq \cdots.
  \end{equation}
  The stated inclusions hold because $b^la^2=b^ka^2b^{k-l}$ for all $k \ge l$.
  Suppose $l > k$ and $b^la^2 = b^ka^2 f$ with $f \in K[S]$.
  Without restriction $k=0$.
  Moreover $f$ must be a monomial with $\deg_a(f)=0$, so that in fact $f = b^m$ for some $m \ge 0$.
  Thus $b^l a^2 = a^2 b^m$ with $l \ge 1$ and $m \ge 0$.
  But this is impossible because the left and the right side are both in the normal form as in \eqref{eq:nf}.
  Thus the chain in \eqref{eq:asc} is an infinite proper ascending chain of principal right ideals.
  By symmetry, $K[S]$ also does not satisfy the ACC on principal left ideals.

  Since $K[S]$ does not satisfy the ACC on principal right [left] ideals, it is in particular not a BF-domain.
  Alternatively, this can be seen directly as follows.
  The monoid $S$ does not have BF as $a$ and $b$ are atoms and $a^2=b^na^2b^n$ implies $\sL(a^2) \supseteq \{\, 2 + 2n : n \ge 0 \,\}$.
  \footnote{In fact, the only factorizations of $a^2$ are those of the form $a^2=b^na^2b^n$, so that even $\sL(a^2) = \{\, 2 + 2n : n \ge 0 \,\}$ holds.
  To verify this, consider the normal forms of monomials containing the element $a$ exactly twice.}
  Since $S$ is divisor-closed in $K[S]$, also $K[S]$ does not have BF.
\end{proof}

Since there exist (commutative) domains that satisfy the ACCP but do not have BF, it would be interesting to know if the previous example can be refined in this direction. We therefore pose the following question.

\begin{question}
  Does there exist a domain $R$ that is a finitely presented algebra over a field, such that $R$ satisfies the ACCP but is not a BF-domain?
\end{question}

\section{Length-preserving homomorphisms to Krull monoids}

Let $H$ and $D$ be monoids.
We shall call a monoid homomorphism $\varphi\colon H \to D$
\begin{itemize}
\item \defit{length-preserving} if $\sL(a) \subseteq \sL(\varphi(a))$ for all $a \in H$;
\item \defit{fully length-preserving} if $\sL(a)=\sL(\varphi(a))$ for all $a \in H$.
\end{itemize}
Note that $a \in H$ is an atom if and only if $1 \in \sL(a)$.
Thus, if $\varphi\colon H \to D$ is length-preserving and $u \in H$ is an atom, then $\varphi(u)$ is an atom of $D$.
Conversely, suppose $\varphi$ maps atoms of $H$ to atoms of $D$.
If $a=u_1\cdots u_k$ with atoms in $H$, then $\varphi(a) = \varphi(u_1)\cdots \varphi(u_k)$ with $\varphi(u_1)$, $\ldots\,$, $\varphi(u_k)$ atoms of $D$.
So, a monoid homomorphism $\varphi\colon H \to D$ is length-preserving if and only if it maps atoms of $H$ to atoms of $D$.

Suppose that $H$ is atomic.
If a length-preserving homomorphism $\varphi\colon H \to D$ to a monoid $D$ with bounded factorizations exists, then $H$ has bounded factorizations as well.
So a useful way to establish that a noetherian prime ring $R$ has bounded factorizations is to find a length-preserving monoid homomorphism $R^\bullet \to D$ to some monoid with bounded factorizations.

In the study of non-unique factorizations transfer homomorphisms to (commutative) Krull monoids play a major role; a monoid possessing such a homomorphism is called a transfer Krull monoid (see \cite[Section 2.4]{geroldinger-zhong19} and \cite[Section 5]{geroldinger-zhong20}).
Large classes of rings, including noncommutative rings, whose underlying multiplicative monoid of cancellative elements are transfer Krull monoids are known \cite[Example 5.4]{geroldinger-zhong20}.
Every transfer homomorphism is fully length-preserving, so for transfer Krull monoids the study of sets of lengths reduces to that of an associated Krull monoid, where a well-understood machinery is available.
In particular, transfer Krull monoids have bounded factorizations.
So it may be interesting to ask whether any of the classes of rings studied in this paper are in fact transfer Krull.
It turns out that this is not the case: the first Weyl algebras will provide counterexamples in each class.

For Weyl algebras, we have the following very strong obstacle to studying their arithmetic via any commutative cancellative monoid.

\begin{lemma}
  Let $K$ be a field of characteristic not $2$, and let $A\coloneqq A_1(K)\coloneqq K[x][y;\delta]$ with $\delta(x)=1$ be the first Weyl algebra.
  Then there exists no length-preserving monoid homomorphism $\varphi\colon A^\bullet \to D$ to any commutative cancellative monoid $D$.
\end{lemma}

\begin{proof}
  We have $xy - yx = 1$ and therefore
  \[
    x^2y = (1+xy)x.
  \]
  It is clear that $x$,~$y$ are atoms, and a direct computation shows that also $1+xy$ is an atom of $A^\bullet$ (here we use $\operatorname{char}(K)\ne 2$,  otherwise $1+xy = 2+yx=yx$ factors).

  Suppose there exists an length-preserving homomorphism $\varphi\colon A^\bullet \to D$ to a commutative cancellative monoid $D$.
  Then $\varphi(x)^2 \varphi(y) = \varphi(1+xy)\varphi(x)$, and hence $\varphi(1+xy)=\varphi(x)\varphi(y)$.
  By assumption on $\varphi$, the elements $\varphi(x)$ and $\varphi(y)$ are atoms, so in particular non-units, contradicting that $\varphi(1+xy)$ is an atom.
\end{proof}

\begin{example}
  Independent of the base field, the Weyl algebra $A=A_1(K)$ is a noetherian domain of quadratic growth, Auslander-regular \cite{E}, and a maximal order in its simple Artinian ring of quotients \cite[Corollary 5.1.6]{mcconnell-robson01}.
  By the previous lemma, there exists no length-preserving monoid homomorphism to a commutative cancellative monoid.
  \begin{enumerate}
  \item Suppose $\operatorname{char}(K)=0$.
    Then $A_1(K)$ is a simple Dedekind domain \cite[Corollary 7.11.3]{mcconnell-robson01}.
    In particular, it has Krull and global dimension $1$.
    Thus $A_1(K)$ has bounded factorizations by any of \cref{p:smalldim}, \cref{p:filtration}, \cref{cor:iterated}, \cref{thm: growth}, \cref{AG}, \cref{gradejump}.
  \item
    If $K$ has characteristic $p > 0$, $p \ne 2$, then $A_1(K)$ has center $K[x^p,y^p]$ and Krull and global dimension $2$.
    In this case $A_1(K)$ is module-finite over its center and therefore a PI ring.
    Since it is a noetherian maximal order and PI, it is also a bounded Krull order.
    Thus $A_1(K)$ has bounded factorizations by any of \cref{p:filtration}, \cref{p: bounded}, \cref{cor:iterated}, \cref{c:prime-pi}, \cref{AG}, \cref{gradejump}.
  \end{enumerate}
\end{example}

\bibliographystyle{hyperalphaabbr}
\bibliography{bf}

\end{document}